\theoremstyle{definition}
\newtheorem{defi}{Definition}[section]
\newtheorem{theorem}[defi]{Theorem}
\newtheorem{lemma}[defi]{Lemma}
\newtheorem{prop}[defi]{Proposition}
\newtheorem{claim}[defi]{Claim}
\newtheorem{eg}[defi]{Example}
\newcommand{\cl}{\mathrm{cl}}
\newcommand{\bx}{\bm x}
\newcommand{\by}{\bm y}
\newcommand{\bz}{\bm z}
\newcommand{\bp}{\bm p}
\newcommand{\bq}{\bm q}
\newcommand{\bd}{\bm d}
\newcommand{\bn}{\bm n}
\title{Sample Complexity of Low-rank Tensor Recovery from Uniformly Random Entries}
\author{Hiroki Hamaguchi and Shin-ichi Tanigawa\thanks{Department of Mathematical Informatics, Graduate School of Information Science and Technology, University of Tokyo, 7-3-1 Hongo, Bunkyo-ku, 113-8656,  Tokyo Japan. email: {\tt hiroki4096@gmail.com, tanigawa@mist.i.u-tokyo.ac.jp}}}
\begin{document}
\maketitle

\begin{abstract}
We show that a generic tensor $T\in \mathbb{F}^{n\times n\times \dots\times n}$ of order $k$ and CP rank $d$ can be uniquely recovered from $n\log n+dn\log \log n +o(n\log \log n) $ uniformly random entries with high probability if $d$ and $k$ are constant and $\mathbb{F}\in \{\mathbb{R},\mathbb{C}\}$.
The bound is tight up to the coefficient of the second leading term
and improves on the existing $O(n^{\frac{k}{2}}{\rm polylog}(n))$ upper bound for order $k$ tensors.
The bound is obtained by showing that the projection of the Segre variety to a random axis-parallel linear subspace preserves $d$-identifiability with high probability if the dimension of the subspace is $n\log n+dn\log \log n +o(n\log \log n) $ and $n$ is sufficiently large.
\end{abstract}
\medskip \noindent {\bf Keywords:} low-rank tensors, 
random hypergraphs, Segre varieties, graph rigidity, matroids

\section{Introduction}
Let $\mathbb{F}\in \{\mathbb{C},\mathbb{R}\}$.
We denote by $\mathbb{F}^{n_1\times \dots \times n_k}$ the set of all order $k$ tensors of dimension $(n_1, n_2,\dots, n_k)$.
A tensor $T\in \mathbb{F}^{n_1\times \dots \times n_k}$ can be written as 
\begin{equation}\label{eq:tensor}
T =\sum_{i=1}^d x_i^1\otimes x_i^2\otimes \dots \otimes x_i^k
\end{equation}
for some positive integer $d$ and some vectors $x_i^j \in \mathbb{F}^{n_j}$ for $j=1,\dots, k$.
The smallest possible $d$ for which we can write $T$ in the form of Equation (\ref{eq:tensor}) is called the {\it CP rank}, or simply {\it rank}, of $T$. 
Analyzing low-rank tensors is a classical and still active topic in multilinear algebra. A representative mathematical motivation is a Waring-type question, which asks about the existence of special forms of given polynomials, see, e.g.,~\cite{Lansberg,bernardi}. 
In recent years, low-rank tensors have gathered significant attention from the theoretical computer science and machine learning community. This paper addresses a fundamental question in that context: unique recovery of a tensor from a few entries.

To understand the background of this application, let us first look at the low-rank matrix completion problem. In this problem, we are given a matrix with missing entries and are asked to fill in the missing entries under the assumption that the matrix is of low-rank. The problem was popularized by the Netflix Prize, which aimed to develop a better estimation method for an evaluation score matrix between users and movies to improve their recommendation system. There has been a huge amount of work in this area, both in theory and applications.
A representative theoretical result is the sampling complexity for the unique recovery of low-rank matrices: a matrix $M$ of size $n_1\times n_2$ and rank $d$ can be uniquely recovered from $O(d(n_1+n_2)\log (n_1+n_2))$ random entries if $M$ satisfies so-called the incoherence condition. 
See, e.g., \cite{keshavan2010matrix}.

Following the successful development of matrix completion techniques, there has been an attempt to extend the theory to low-rank tensors, as tensors naturally appear in multidimensional data analysis and have the advantage of having unique decompositions. Although extending ``matrices" to ``tensors" is conceptually straightforward, the theoretical analysis has proven challenging in several aspects. The most intriguing open problem is to determine the tight sampling complexity for low-rank tensor recovery.

\subsection{Problem setting}
In the literature, there are a few different settings or criteria when evaluating sampling complexity in the tensor completion problem~\cite{ghadermarzy,jain2014provable,potechin2017exact,yuan2016tensor,barak,liu2020}. Let us first clarify our setting.
\begin{description}
\item[Information theoretic v.s.~computation complexity:]
An information theoretic bound evaluates the number of observations needed to uniquely determine a tensor, whereas a computational complexity bound evaluates the number of observations that any polynomial-time algorithm requires as input to guarantee the recovery of a tensor.

For the matrix completion, those two bounds are not different;
they are both $O(n\log n)$ when $d$ is constant.
However, it is not known whether this property extends to higher order tensors. In fact, Barak and Moitra~\cite{barak} suggest that the two bounds might be different for tensors of order three.
The result of this paper is information theoretic and our argument is not algorithmic. 


\item[Exact v.s.~approximate:]
Several previous results for the tensor completion problem do not provide exact solutions
because they sometimes bound only a ``normalized" distance between the output of their algorithms and the target tensors.
The sampling complexity for exact recovery can be very large even if a normalized distance is bounded.
For example, Ghadermarzy et al.~\cite{ghadermarzy} claimed that $O(n)$ observations of random entries are enough for recovering low-rank tensors, but this bound is impossible for exact recovery since there are always an infinite number of distinct completions if the number of random observations is below $n\log n$.

The difference between exact and approximate recovery has been pointed out by Barak and Moitra~\cite{barak}. See also \cite{potechin2017exact,liu2020}. Our paper deals with exact recovery.

\item[Assumptions on entries:] 
There are pessimistic instances which are impossible to recover
using a non-trivial number of observations.
Hence, a common approach since the time of matrix completion has been to impose a regularity assumption on the values of entries.
The most standard assumption is the so-called incoherence, which assumes that the entries of each vector $x_i^j$ in the decomposition (\ref{eq:tensor}) are almost uniform.
On the other hand, we establish our result only using a {\em genericity} assumption in the sense that the entries of $x_i^j$ in the decomposition (\ref{eq:tensor}) are algebraically independent over $\mathbb{Q}$.
The set of generic instances forms a dense subset in the space of rank $d$ tensors whose complement has Lebesgue measure zero.
So a uniformly random instance with bounded entries would be generic with probability one.
However, genericity and incoherence are generally incomparable.
\end{description}

Also there are papers which deal with different problem settings:
for instance, Gaussian or generic linear observations (rather than entry-wise observations) are allowed, the low-rank assumption is imposed for other kind of rank such as Tucker rank, or sampling can be done adaptively or non-uniformly, see, e.g.,~\cite{mu2014square,karnik2024}.

\subsection{Main result}
We say that a tensor $T$ of rank $d$ is {\em generic}
if $T$ admits a decomposition (\ref{eq:tensor}) such that 
the set of entries of $x_i^j$ for $i=1,\dots, d$ and $j=1,\dots, k$ is algebraically independent over $\mathbb{Q}$.
Our main result is the following.
\begin{theorem}\label{thm:main_tensor_completion}
Let $\mathbb{F}\in \{\mathbb{R},\mathbb{C}\}$,
$d$ and $k$ be integers with $k\geq 3$,
and $T\in \mathbb{F}^{n\times n\times \dots \times n}$ be any generic tensor of order $k$ and rank $d$.
Then the probability that 
$n\log n+dn\log \log n +o(\log\log n)$ uniformly random observations
of the entries of $T$ uniquely determines $T$
tends to $1$ as $n\rightarrow \infty$.
\end{theorem}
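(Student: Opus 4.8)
The plan is to convert unique recovery into a statement that a random axis-parallel projection of the Segre variety preserves $d$-identifiability, and then into an estimate on random $k$-partite $k$-uniform hypergraphs. First I would reduce to a sampling-pattern statement: up to a routine coupling (and the remark that repeated samples neither help nor hurt, changing the count only by $o(m)$), $m$ uniformly random observations form a uniformly random $\Omega\subseteq[n]^k$ with $|\Omega|=m:=n\log n+dn\log\log n+o(n\log\log n)$. Write $\pi_\Omega\colon\mathbb F^{n^k}\to\mathbb F^{\Omega}$ for the coordinate projection, let $S\subseteq\mathbb F^{n^k}$ be the affine cone over the Segre variety, so the rank-$\le d$ tensors form $\sigma_d(S)=\overline{S+\dots+S}$ ($d$ summands). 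Since the entries of $T$ are algebraically independent over $\mathbb Q$ while, for a fixed $\Omega$, the locus of non-uniquely-recoverable tensors is a constructible set defined over $\mathbb Q$ by data depending only on $\Omega$, it suffices to prove that with probability $\to1$ the random $\Omega$ is \emph{good}: a $\mathbb Q$-generic point of $\sigma_d(S)$ is its own unique rank-$\le d$ completion on $\Omega$. A dimension count (valid once $\Omega$ is large enough that $\pi_\Omega|_{\sigma_d(S)}$ is generically finite) rules out completions of rank $<d$, so goodness amounts to: $\pi_\Omega$ is generically injective on $S$, and $\pi_\Omega(S)$ is still generically $d$-identifiable.

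The algebraic core is to control identifiability through a matroid. Let $\Phi((x_i^j))=\sum_{i=1}^d x_i^1\otimes\dots\otimes x_i^k$ and let $\mathcal M=\mathcal M_{d,k,n}$ be the linear matroid on $[n]^k$ whose independent sets are the sets of rows of the Jacobian of $\Phi$ that are independent at a generic point; its rank is $\dim\sigma_d(S)=d(k(n-1)+1)$. The implicit function theorem gives: if $\Omega$ spans $\mathcal M$, then a generic $T$ has finitely many rank-$\le d$ completions. To upgrade ``finitely many'' to ``one'' I would combine (i) generic $d$-identifiability of the Segre variety itself for every fixed $d$ once $k\ge3$ and $n$ is large (Kruskal's theorem for $k=3$, flattening for $k>3$); (ii) generic injectivity of $\pi_\Omega$ on $S$, which is equivalent to connectivity of the hypergraph $H_\Omega$ on $k$ disjoint copies of $[n]$ with edge set $\Omega$ (the $d=1$, ``take logarithms'' analysis); and (iii) irreducibility of $\sigma_d(S)$ together with a monodromy/analytic-continuation argument, strengthened Hendrickson-style by also demanding that $\Omega$ remain spanning after deletion of any single observation (``deletion-spanning''). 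This should show the projection creates no new decomposition over $\pi_\Omega(S)$, hence that such $\Omega$ are good.

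The probabilistic heart is then to show that a uniformly random $\Omega$ with $|\Omega|=m$ is, with probability $\to1$, connected and deletion-spanning in $\mathcal M_{d,k,n}$. Connectivity of $H_\Omega$ and coverage of all $kn$ coordinate slices $\Sigma_{j,a}=\{e\in[n]^k:e_j=a\}$ already occur at $n\log n+\omega(n)$. The binding obstruction is that deleting a slice $\Sigma_{j,a}$ zeroes the $d$ Jacobian columns indexed by $(x_i^j)_a$, so the complement of each slice spans a corank-$d$ flat of $\mathcal M$; hence $\Omega$ can be deletion-spanning only if it meets every $\Sigma_{j,a}$ in at least $d+1$ elements in general position inside the corresponding rank-$d$ contraction. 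A Chernoff/Poisson estimate for $\min_{j,a}|\Omega\cap\Sigma_{j,a}|$ shows ``$\ge d+1$ per slice'' holds with probability $\to1$ exactly when $m/n\ge\log n+d\log\log n+\omega(1)$ — which is precisely where the term $dn\log\log n$ enters, and, on the lower-bound side, why it cannot be removed.

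I expect the main obstacle to be the \emph{sufficiency} direction of the last step together with the local-to-global upgrade in the second step: one must show that at this value of $m$ the only way $\Omega$ can fail to be deletion-spanning is an under-covered slice, i.e.\ that $\mathcal M_{d,k,n}$ has no further ``small'' cocircuits, so that every corank-$\le d$ flat is either generated by the slice-flats or so large that a random $\Omega$ of size $m$ misses its complement with probability $o(1/n)$. This is delicate because already for matrices ($k=2$, $d\ge2$) the analogous matroid resists a clean combinatorial description. I would therefore avoid a full combinatorial characterization and instead exhibit an explicit spanning certificate that a random $\Omega$ of the stated size satisfies whp — e.g.\ a ``peeling'' that orders the slices and reconstructs, one slice at a time, the $d$ new component-entries from the $\ge d+1$ new observations using the genericity of their overlap with the already-reconstructed part — and then verify separately that possessing such a certificate forces uniqueness, not merely finiteness, of the completion.
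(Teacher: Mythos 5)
Your high-level architecture matches the paper's: reduce unique recovery to (a) generic injectivity of the axis-parallel projection on the Segre cone (the $d=1$ problem) and (b) $d$-identifiability of the projected variety, and locate the $dn\log\log n$ term at the minimum-degree-$(d+1)$ threshold for the slices. However, there are two genuine gaps at exactly the points you flag as "obstacles," and neither is closed by your proposal. First, your local-to-global upgrade (step (iii)) rests on a monodromy argument "strengthened Hendrickson-style by deletion-spanning." Redundant rigidity plus connectivity is a necessary but famously non-sufficient condition for global rigidity, and you give no mechanism by which deletion-spanning would exclude new decompositions appearing after projection; note also that $d$-identifiability of the full Segre variety (Kruskal, flattenings) says nothing about the projected variety $\pi_\Omega(S)$, which is the whole difficulty. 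The paper closes this gap with an entirely different tool: the Chiantini--Ottaviani and Massarenti--Mella identifiability criteria, converted into a checkable linear-algebraic condition $\dim\bigl(\bigcap_{\omega\in\ker I_G}\ker A_\omega\bigr)=k$ on weighted adjacency matrices, which is then certified a.a.s.\ by introducing a new "cycle polymatroid" and running the Lew--Nevo--Peled--Raz closure argument on it to find a spanning $k$-partite $2$-tree.

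Second, your claim that generic injectivity of $\pi_\Omega$ on $S$ "is equivalent to connectivity of the hypergraph" via taking logarithms is incorrect as stated. The logarithm only controls the moduli of the coordinates; over $\mathbb{R}$ the signs give a linear system over $GF(2)$, and over $\mathbb{C}$ the phases give a system modulo $2\pi\mathbb{Z}$ whose analysis (via Cramer's rule and Hadamard's bound) requires the incidence matrix $I_G$ to have rank $N-(k-1)$ over $GF(q)$ for \emph{every} prime $q\le k^{N/2}$, not merely over $\mathbb{R}$. Connectivity does not imply these finite-field rank conditions, and verifying them simultaneously for exponentially many primes requires the quantitative $(1/2k)^{kn}$ failure bound in the clique lemma so that a union bound survives. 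Your final "peeling certificate" paragraph is an honest admission that the sufficiency direction is open; in the paper that role is played by the spanning $k$-partite $d$-tree found in the closure of the relevant (poly)matroid, together with the facts that $d$-trees are locally rigid and $2$-trees satisfy the kernel condition.
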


For the unique recovery of a rank $d$ tensor, 
at least $d$ entries must be observed in each slice of the tensor.
Hence $n\log n+(d-1)n\log \log n$ is a lower bound for sampling complexity (cf.~Proposition~\ref{prop:min_degree}).
Thus, the bound of Theorem~\ref{thm:main_tensor_completion}
is tight up to the coefficient of the second leading term
and it improves on the existing $O(n^{\frac{k}{2}}{\rm polylog}(n))$ upper bound for order $k$ tensors in \cite{jain2014provable,potechin2017exact,yuan2016tensor,liu2020} when the rank $d$ is constant.

Currently no polynomial-time algorithm is known 
for recovering low-rank tensors with $O(n\log n)$ samples, even experimentally.
So our information theoretic bound supports the conjecture of Moitra and Barak~\cite{barak} about the gap between information theoretic bound and computational complexity bound.



\subsection{Outline of the proof}
There are three key ingredients in the proof of Theorem~\ref{thm:main_tensor_completion}:
\begin{description}
\item[(i)] the graph rigidity formulation of low-rank tensor completions due to Cruickshank et al.~\cite{cruickshank2023identifiability};
\item[(ii)] the sharp rigidity threshold of Erd{\"o}s--Reyni random graphs due to Lew et al.~\cite{lew2023sharp};
\item[(iii)] the sufficient conditions for the $d$-identifiability of algebraic varieties due to Chiantini and Ottaviani~\cite{CO2012} or Masaratti and Mella~\cite{MMident}.
\end{description}
Let us quickly explain how these ingredients are integrated into the proof.
Let $V_i$ be a set of $n_i$ elements for $1\leq i\leq k$ and $V$ be the disjoint union of all $V_i$. We can use a subset $E$ of $V_1 \times \dots \times V_k$ to represent the known entries in an instance of the tensor completion problem. In this manner, we encode the underlying combinatorics of each instance of the completion problem using a $k$-partite hypergraph $G=(V, E)$.
Then the rank $d$ tensor completion can be equivalently formulated as the following realization problem of a hypergraph: Given a $k$-partite hypergraph $G=(V,E)$ with $E\subseteq V_1 \times \dots \times V_k$ and $T_e\in \mathbb{F}$ for $e\in E$, 
find $\bp: V \rightarrow \mathbb{F}^d$ such that 
\begin{equation}\label{eq:system1_partite}
\sum_{j=1}^d \left( \prod_{i\in e}p_{i,j}\right) =T_e \qquad \text{for $e\in E$},
\end{equation}
where $p_{i,j}$ denotes the $j$-th entry of $\bp(i)$.
(Although the notation is slightly different, this is also a common formulation in machine learning context. See Section~\ref{sec:pre} for more details.)
The unique tensor recovery can be understood as the uniqueness of the realization of $G$ under the algebraic constraint (\ref{eq:system1_partite}).

The unique realizability of graphs under algebraic constraints is a central topic in graph rigidity theory, and a recent general framework by Cruickshank et al.~\cite{cruickshank2023identifiability} can capture the algebraic constraint system (\ref{eq:system1_partite}).
The advantage of this rigidity formulation is that 
one can exploit techniques from graph or matroid theory.
We shall adapt a recent breakthrough result by Lew et al.~\cite{lew2023sharp} about the rigidity of Erd{\"o}s--Reyni random graphs to the setting arose in the tensor completion problem.
This immediately gives a sharp probability threshold for guaranteeing the finiteness of completions of generic partially-filled low-rank tensors (or, equivalently, local rigidity in terms of graph rigidity theory). 

In order to convert the finiteness result (local rigidity) 
to the uniqueness result (global rigidity),
we use a connection to algebraic geometry based on observations in \cite{cruickshank2023identifiability}.
Specifically, the uniqueness follows by showing that 
the projection of the underlying Segre variety to a random axis-parallel linear subspace preserves $d$-identifiability. (The details will be explained in Section~\ref{sec:iden}.)
For this, we use sufficient conditions for the $d$-identifiability of algebraic varieties due to Chiantini and Ottaviani~\cite{CO2012} or Masaratti and Mella~\cite{MMident}.
Interestingly, for our particular algebraic variety, 
the sufficient condition of Masaratti and Mella~\cite{MMident} has a purely graph theoretical formulation.
Based on this, we shall introduce a new polymatroid on the edge set of hypergraphs whose rank maximality certifies the condition of Chiantini and Ottaviani or Masaratti and Mella, and show that the result by Lew et al.~\cite{lew2023sharp} can be also used to prove the uniqueness result (or, equivalently, the global rigidity of Erd{\"o}s--Reyni type random hypergraphs). 

The proof also implies that the projection of the Segre variety to a random axis-parallel linear subspace preserves $d$-identifiability if the dimension of the subspace is $n\log n+dn\log\log n+o(n\log\log n)$.
This should be compared with results with respect to generic projections.
Starting from the Noether normalization lemma, generic projections of algebraic varieties are well studied and they have also been considered in the setting of tensor analysis (see, e.g.,~\cite{mu2014square,breiding2021algebraic}). On the other hand, projections we considered here are not generic and combinatorics of projections is a key matter.

The paper is organized as follows. 
In Section~\ref{sec:pre}, we shall provide the details on the rigidity formulation of the tensor completion problem.
In Section~\ref{sec:iden}, we introduce tools from algebraic geometry and formulate a sufficient condition for unique tensor recovery in the language of graph theory (Theorem~\ref{thm:MM_test}).
In Section~\ref{sec:1d}, we first solve the 1-dimensional global rigidity problem, which corresponds to the rank one case in tensor completions.
In Section~\ref{sec:random}, we extend the result of Lew et al.~\cite{lew2023sharp} to random $k$-partite hypergraphs.
In Section~\ref{sec:main}, we introduce a new polymatroid which certifies the $d$-identifiability of the underlying algebraic varieties and then put everything together in Section~\ref{subsec:main_proof} to complete the proof of Theorem~\ref{thm:main_tensor_completion}.

\subsection{Notations}\label{subsec:notation}
\paragraph{Hypergraphs.} 
A {\em $k$-uniform hypergraph} is a pair $G=(V,E)$ of a finite set $V$ 
and a collection $E$ of subsets of $V$ of size $k$.
A $k$-uniform hypergraph is simply called a {\em $k$-graph}.
For a $k$-graph $G$, the vertex set and the edge set are denoted by $V(G)$ and $E(G)$, respectively.
A $k$-graph $G$ is said to be {\em $k$-partite} if $V$ can be partitioned into $k$ disjoint sets $V_1,\dots, V_k$ such that 
$|e\cap V_i|=1$ for every $e\in E(G)$ and every $i=1,\dots, k$.
In set theory, an edge in a $k$-partite $k$-graph is sometimes called a transversal.
Namely, given disjoint sets $V_1,\dots, V_k$,
a set $X$ with $X\subseteq \bigcup_{i=1}^k V_i$ is called a {\em transversal} (resp., {\em partial transversal})
if $|X\cap V_i|=1$ 
(resp., $|X\cap V_i|\leq 1$) for every $i=1,\dots, k$.

Let $k$ be a positive integer and $\bn=(n_1, n_2, \dots, n_k)\in \mathbb{N}^k$.
Let $K_{\bn}^k$ be the {\em complete $k$-partite $k$-graph} whose $i$-th vertex class consists of $n_i$ vertices.
When $\bn=(n, n, \dots, n)$, $K_{\bn}^k$ is said to be {\em balanced} and is denoted simply by $K_n^k$.

Let $G$ be a $k$-partite $k$-graph.
A vertex set $X\subseteq V(G)$ is said to be a {\em k-partite clique} in $G$ if 
the induced subgraph of $G$ by $X$ is complete.
Since we only look at $k$-partite $k$-graphs in this paper, 
a $k$-partite clique is simply called a {\em clique}.

In a hypergraph $G$, a sub-hypergraph $H$ is said to be {\em spanning}
if $V(H)=V(G)$.

\paragraph{Erd{\"o}s--Reyni type random subgraphs.} 
For $n\in \mathbb{N}$, we consider two random subgraph models of $K_{n}^k$.
The first one is obtained from $K_{n}^k$ by deleting each hyperedge with probability $1-p$ for $p\in [0,1]$.
This random graph is denoted by $G^k(n,p)$ or even simply by $G(n,p)$
since we only look at $k$-graphs in this paper.
The second one, denoted by $G^k(n,m)$ or simply by $G(n,m)$ for a nonnegative integer $m$, follows the uniform distribution over all subgraphs of $K_{n}^k$ having exactly $m$ hyperedges.

We say that a property holds {\rm a.a.s.}~if 
the probability that $G(n,p)$ (resp., $G(n,m)$) has the property 
tends to one when $n\rightarrow \infty$.
It is well known that, for a monotone property $P$, 
$G(n,p)$ and $G(n,m)$ are asymptotically equivalent
in the sense that 
$G(n,p)$ satisfies $P$ a.a.s.~if and only if 
$G(n,m)$ satisfies $P$ a.a.s.~for $m=\lceil n^k p\rceil$.
See, e.g., \cite{freeze} for more details.

A well-known fact in the random graph theory states that 
$p=\frac{\log n + (d-1)\log \log n+o(\log\log n)}{n}$ is the probability threshold for 
the (ordinary) Erd{\"o}s--Reyni random graph to have minimum degree at least $d$. It is also known that the analysis can be extended to random hypergraphs. The same analysis also gives the minimum degree threshold for $G(n,p)$. 
\begin{prop}\label{prop:min_degree}
Let $n, p\in \mathbb{N}$ and $p\in [0,1]$.
Define 
\begin{equation}\label{eq:p}
\begin{split}
p^{(d)}_+&=\frac{\log n+(d-1)\log \log n+\log\log \log n}{n^{k-1}}, \\
p^{(d)}_-&=\frac{\log n+(d-1)\log \log n -\log\log \log n}{n^{k-1}}.
\end{split}
\end{equation}

If $p\geq p^{(d)}_+$,
then a.a.s.~the minimum degree of $G(n,p)$ is at least $d$.

If $p\leq p^{(d)}_-$,
then a.a.s.~the minimum degree of $G(n,p)$ is at most $d-1$.
\end{prop}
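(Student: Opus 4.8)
The statement is a hypergraph analogue of the classical minimum-degree threshold for Erdős–Rényi graphs, so the plan is to run the standard first/second moment argument on the number of low-degree vertices. Fix $d$ and work with $G(n,p)$ on the balanced complete $k$-partite $k$-graph $K_n^k$. A vertex $v$ in class $V_i$ lies in exactly $N := n^{k-1}$ potential hyperedges (one for each choice of a representative from each of the other $k-1$ classes), and these $N$ indicator variables are mutually independent, each present with probability $p$. Hence $\deg(v)$ is exactly $\mathrm{Bin}(N,p)$, and the events $\{\deg(v)\le d-1\}$ for distinct $v$ are only mildly dependent (two vertices in different classes share exactly $n^{k-2}$ potential edges; vertices in the same class share none).

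First I would handle the upper-threshold claim ($p \ge p^{(d)}_+$) by a first-moment bound: let $X$ be the number of vertices of degree $\le d-1$. Then $\mathbb{E}[X] = kn \cdot \Pr[\mathrm{Bin}(N,p)\le d-1]$. Using the Poisson-type estimate $\Pr[\mathrm{Bin}(N,p)\le d-1] \le (1+o(1))\frac{(Np)^{d-1}}{(d-1)!}e^{-Np}$ (valid since $Np \to \infty$ and $d$ is constant), one plugs in $Np = \log n + (d-1)\log\log n + \log\log\log n$, so that $e^{-Np} = \frac{1}{n (\log n)^{d-1}\log\log n}$ and $(Np)^{d-1} = (1+o(1))(\log n)^{d-1}$; the product gives $\mathbb{E}[X] = O\!\left(\frac{1}{\log\log n}\right) \to 0$, hence $X = 0$ a.a.s.\ by Markov's inequality. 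For $p$ strictly larger than $p^{(d)}_+$ one only gets a smaller expectation, so monotonicity finishes this direction.

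For the lower-threshold claim ($p \le p^{(d)}_-$) I would show $X \ge 1$ a.a.s.\ via the second moment method: it suffices that $\mathbb{E}[X]\to\infty$ and $\mathbb{E}[X^2] = (1+o(1))\mathbb{E}[X]^2$. The first-moment computation above, now with $Np = \log n + (d-1)\log\log n - \log\log\log n$, gives $\mathbb{E}[X] = \Theta(\log\log n) \to \infty$. For the variance, $\mathbb{E}[X^2] = \sum_{u,v}\Pr[\deg(u)\le d-1,\ \deg(v)\le d-1]$; the diagonal terms contribute $\mathbb{E}[X]$, and for off-diagonal pairs one bounds the joint probability by conditioning on the $n^{k-2}$ shared potential edges and noting the remaining indicators are independent — a standard computation shows $\Pr[\deg(u)\le d-1,\deg(v)\le d-1] \le (1+o(1))\Pr[\deg(u)\le d-1]\Pr[\deg(v)\le d-1]$ because the overlap $n^{k-2}$ is a vanishing fraction of $N = n^{k-1}$. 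Summing yields $\mathbb{E}[X^2] \le \mathbb{E}[X] + (1+o(1))\mathbb{E}[X]^2 = (1+o(1))\mathbb{E}[X]^2$, and Chebyshev gives $X\ge 1$ a.a.s., i.e.\ the minimum degree is at most $d-1$.

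The only genuinely delicate point is the correlation estimate for the second moment: one must verify that the $\pm\log\log\log n$ slack in the definition of $p^{(d)}_\pm$ is exactly what is needed to separate $\mathbb{E}[X]\to 0$ from $\mathbb{E}[X]\to\infty$, and that the pairwise dependence coming from the $n^{k-2}$ co-incident hyperedges is asymptotically negligible (it contributes only a $1+o(1)$ factor, not a constant factor). Everything else is a routine adaptation of the graph case, and since minimum degree $\ge d$ is a monotone increasing property, the equivalence between $G(n,p)$ and $G(n,m)$ noted in Section~\ref{subsec:notation} lets us state the result in either model. I would also remark that this proposition is only used to establish the lower bound $n\log n + (d-1)n\log\log n$ on sampling complexity, so only a clean threshold statement — not the sharpest possible error term — is required.
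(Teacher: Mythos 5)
Your plan is correct, and it is exactly the argument the paper has in mind: Proposition~\ref{prop:min_degree} is stated without proof as the ``same analysis'' that gives the classical minimum-degree threshold for Erd\H{o}s--R\'enyi graphs, i.e.\ the first/second moment method applied to the count of vertices of degree at most $d-1$, using that each vertex lies in exactly $n^{k-1}$ independent potential hyperedges and that two vertices share at most $n^{k-2}$ of them. Your computations of $e^{-Np}$ in both regimes and the observation that the $n^{k-2}/n^{k-1}=1/n$ overlap contributes only a $1+o(1)$ factor to the pairwise correlation are the right details to check, so the proposal matches the intended (omitted) proof.
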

\paragraph{Polymatroids.} 
We consider a matroid as a pair $M=(E, r)$ of a finite set $E$ and a function $r:2^E\rightarrow \mathbb{Z}$ satisfying
(R1) $r(\emptyset)=0$;
(R2) $r(e):=r(\{e\})\leq 1$ for $e\in E$; 
(R3) $r(X)\leq r(Y)$ for $X\subseteq Y\subseteq E$; and 
(R4) $r(X)+r(Y)\geq r(X\cap Y)+r(X\cup Y)$ for $X, Y\subseteq E$.

When $r$ satisfies (R1), (R3) and (R4), the pair $P=(E,r)$ is called a {\em polymatroid}.
A representative example of a polymatroid is a linear polymatroid
obtained by assigning a linear subspace $L_e$ for each $e\in E$
and setting $r(F)=\dim \langle L_e: e\in E\rangle$ for $F\subseteq E$,
where $\langle \cdot \rangle$ means the linear span.

Let $P=(E,r)$ be a polymatroid. 
The {\em rank} of the polymatroid is $r(E)$,
and $X\subseteq E$ is said to be a {\em spanning set} if $r(X)=r(E)$.
The {\em closure} $\cl_P:2^E\rightarrow 2^E$ is defined by
$\cl_P(X)=\{e\in E: r(X+e)=r(X)\}$.
$\cl_P$ is indeed a closure operator
as, by (R3) and (R4), it satisfies the closure axiom: 
(C1) $X\subseteq \cl_P(X)$ for $X\subseteq E$;
(C2) $\cl_P(X)\subseteq \cl_P(Y)$ for $X\subseteq Y\subseteq E$;
(C3) $\cl_P(\cl_P(X))=\cl_P(X)$ for any $X\subseteq E$.

\section{Rigidity Formulation of the Tensor Completion Problem}\label{sec:pre}
In this section, we introduce a graph rigidity formulation of the low-rank tensor completion problem.
The material in this section is based on \cite{cruickshank2023identifiability}.
It should be noted that our formulation is a tensor analogue of that of matrix completions due to Singer and Cucuringu~\cite{singer2010uniqueness} and Kir{\'a}ly, Theran, and Tomioka~\cite{kiraly2015algebraic}.

Let $V_1,\dots, V_k$ be disjoint finite non-empty sets with $n_i=|V_i|$.
Denote $V=V_1\cup \dots\cup V_k$ and $\bn=(n_1,\dots, n_k)$ with $N=\sum_i n_i$ and $M=\prod_i n_i$.
Recall that $K^k_{\bn}$ denotes the complete $k$-partite $k$-graph on $V$. 
In the subsequent discussion, each coordinate of $\mathbb{F}^{N}$ is indexed by each vertex of $K^k_{\bn}$
and each coordinate of $\mathbb{F}^M$ is indexed by an edge of $K^k_{\bn}$.

\subsection{Geometric interpretation of low-rank tensors}
We shall look at (the affine variant) of the {\em Segre map},
that is, a map $\sigma_{\bn}:\mathbb{F}^N\rightarrow \mathbb{F}^M$ given by
\[
\sigma_{\bn}(\bx)=\Big(x_{i_1}x_{i_2}\dots x_{i_k}\Big)_{(i_1,\dots, i_k)\in E(K^k_{\bn})}
\qquad (\bx=(x_1,\dots, x_N)\in \mathbb{F}^N)
\]
Then $\overline{{\rm im}\ \sigma_{\bn}}$ is known as the affine cone of the {\em Segre variety} of order ${\bn}=(n_1,\dots, n_k)$.
The set of tensors of rank at most $d$ can be understood
by looking at the $d$-secant of the Segre variety.
In general, for an affine variety ${\cal V}$,
its {\em $d$-secant} $S_d(\mathcal{V})$ is defined as the Zariski closure of the set of points linearly spanned by $d$ points in ${\cal V}$.

A {\em $d$-dimensional point configuration} is a tuple $\bp=(p_1,\dots, p_N)\in (\mathbb{F}^d)^N$ of $N$ points in $\mathbb{F}^d$.
The $j$-th {\em coordinate vector} of $\bp$ is the $j$-th row vector of $\bp=(p_1,\dots, p_N)$ when $\bp$ is viewed as a $d\times N$ matrix. 
Equivalently, if we denote the $j$-th entry of $p_i\in \mathbb{F}^d$ by $p_{i,j}$, the $j$-th coordinate vector is $(p_{1,j},p_{2,j},\dots, p_{N,j})\in \mathbb{F}^N$.

We define the $d$-sum $\sigma^d_{\bn}: (\mathbb{F}^d)^N\rightarrow \mathbb{F}^M$ of $\sigma_{\bn}$ to be the sum of $\sigma_{\bn}$ applied to each coordinate vector, i.e., 
\[
\sigma^d_{\bn}(\bp)=\left(\sum_{j=1}^d p_{i_1,j} p_{i_2,j}\dots p_{i_k,j}\right)_{(i_1,\dots, i_k)\in E(K^k_{\bn})} 
\qquad (\bp=(p_1,\dots, p_N)\in (\mathbb{F}^d)^N).
\]
Then 
$\overline{{\rm im}\ \sigma^d_{\bn}}$ is the $d$-secant 
of $\overline{{\rm im}\ \sigma_{\bn}}$.

The $d$-secant of a Segre variety appears as a basic mathematical object in the analysis of low-rank tensors.
To see this, consider first a rank-1 tensor $T\in \mathbb{F}^{n_1\times n_2\times \dots \times n_k}$.
Since $T$ is rank 1, $T=\bx_1\otimes \bx_2\otimes \dots \otimes \bx_k$
for some $\bx_i\in \mathbb{F}^{n_i}$ for $i=1,\dots, k$.
Let $\bx=(x_1, x_2,\dots,x_N) \in \mathbb{F}^N$ be an $N$-dimensional vector obtained by concatenating $\bx_1, \bx_2,\dots, \bx_k$.
Then the $(i_1,\dots, i_k)$-th entry of $T$ is 
$x_{i_1}x_{i_2}\dots x_{i_k}$, which is exactly equal to the $(i_1,\dots, i_k)$-th entry of $\sigma_{\bn}(\bx)$.
Hence, the image of $\sigma_{\bn}(\bx)$ coincides with the set of tensors of rank at most one.
Since $\sigma_{\bn}^d$ is the sum of $d$ copies of $\sigma_{\bn}$,
the image of $\sigma_{\bn}^d$ is the set of tensors of rank at most $d$.
This can be also verified by entry-wise comparison by 
observing that, for a tensor $T$ of rank $d$ and 
a $d$-dimensional point configuration $\bp$,
$T=\sigma_{\bn}^d(\bp)$ holds if and only if 
each coordinate vector of $\bp$ gives each term of the tensor decomposition (\ref{eq:tensor}) of $T$.

As explained above, $\sigma_{\bn}$ and $\sigma_{\bn}^d$ give parameterisations of a Segre variety and its $d$-secant, respectively.
However, this parameterisation has redundancy since $\sigma_{\bn}$ or $\sigma_{\bn}^d$ are not bijective.
This is due to the fact that the tensor decomposition~(\ref{eq:tensor}) has a degree of freedom of permutating indices $i$ and scaling each $x_{i}^j$ by $\lambda_i^j$ with $\prod_{j=1}^k \lambda_i^j=1$.
In terms of point configurations, this degree of freedom can be captured as follows.
Consider the situation where the $k$-fold product $GL(d,\mathbb{F})^k$ of the general linear group acts on 
$(\mathbb{F}^d)^N$ such that,
for $(A_1,\dots, A_k)\in GL(d,\mathbb{F})^k$ and 
 $\bp=(p_1,\dots, p_N)\in (\mathbb{F}^d)^N$,
\[
((A_1,\dots, A_k)\cdot \bp)_v=A_j p_v\quad \text{for } v\in V_j. 
\]
Namely, each $A_j$ acts on points indexed by vertices in $V_j$.
Observe that 
the value of $\sigma^d_{\bn}$ is invariant by the action of $(A_1,\dots, A_k)$ 
if $A_i=D_i\Sigma$ 
for some permutation matrix $\Sigma$ and some diagonal matrix $D_i$ satisfying $\prod_{i=1}^k D_i=I_d$.
Such $(A_1,\dots, A_k)$ is called a {\em stabilizer}.
The set of all stabilizers
forms a Lie subgroup of $GL(d,\mathbb{F})^k$ of dimension $d(k-1)$.

We say that $\bp, \bq\in \mathbb(\mathbb{F}^d)^N$ are {\em congruent} if 
there is a stabilizer that maps $\bp$ to $\bq$.
If $\bp$ and $\bq$ are congruent, then $\sigma_{\bn}^d(\bp)=\sigma_{\bn}^d(\bq)$ and $\bp, \bq$ represent different decompositions of the same rank $d$ tensor.

\begin{eg}\label{eg:congruent}
    Let us consider the case where $d=2,k=3$, $\bn=(2,2,2)$, and $V_1=\{1,2\}, V_2=\{3,4\}, V_3=\{5,6\}$
    For a point configuration $\bp=(p_1,\dots, p_6) \in (\mathbb{F}^2)^6$, the corresponding tensor $T$, in the form of\eqref{eq:tensor}, is 
    \begin{equation*}
        T= \mqty(p_{1,1} \\ p_{2,1}) \otimes \mqty(p_{3,1} \\ p_{4,1}) \otimes \mqty(p_{5,1} \\ p_{6,1}) + \mqty(p_{1,2} \\ p_{2,2}) \otimes \mqty(p_{3,2} \\ p_{4,2}) \otimes \mqty(p_{5,2} \\ p_{6,2}).
    \end{equation*}
     The map $\sigma^2(\bp)$ returns 
              \vspace{-1em}
         \begin{equation*}
        \kbordermatrix{
               &                                                 \\
        (1,3,5)&p_{1,1} p_{3,1} p_{5,1} + p_{1,2} p_{3,2} p_{5,2}\\
        (1,3,6)&p_{1,1} p_{3,1} p_{6,1} + p_{1,2} p_{3,2} p_{6,2}\\
        (1,4,5)&p_{1,1} p_{4,1} p_{5,1} + p_{1,2} p_{4,2} p_{5,2}\\
        (1,4,6)&p_{1,1} p_{4,1} p_{6,1} + p_{1,2} p_{4,2} p_{6,2}\\
        (2,3,5)&p_{2,1} p_{3,1} p_{5,1} + p_{2,2} p_{3,2} p_{5,2}\\
        (2,3,6)&p_{2,1} p_{3,1} p_{6,1} + p_{2,2} p_{3,2} p_{6,2}\\
        (2,4,5)&p_{2,1} p_{4,1} p_{5,1} + p_{2,2} p_{4,2} p_{5,2}\\
        (2,4,6)&p_{2,1} p_{4,1} p_{6,1} + p_{2,2} p_{4,2} p_{6,2}}
        \in \mathbb{F}^8,
    \end{equation*}
    whose $(i_1,i_2,i_3)$-th entry coincides with that of $T$.
    Let $a,b$ be nonzero numbers in $\mathbb{F}$,
    and suppose 
    $(A_1, A_2, A_3)\in GL(2,\mathbb{F})^3$ is given by  
    $A_1=\mqty(1&0\\0&a)\mqty(0&1\\1&0)$,
    $A_2=\mqty(1&0\\0&b)\mqty(0&1\\1&0)$,
    $A_3=\mqty(1&0\\0&\frac{1}{ab})\mqty(0&1\\1&0)$.
    Then it forms a stabilizer, and its action maps $\bp$ to 
   $\bq=(q_1,\dots, q_6)$ with 
   $q_1=A_1p_1, q_2=A_1p_2, q_3=A_2p_3, q_4=A_2p_4, q_5=A_3p_5, q_6=A_3p_6$.
   Then, $\bq$ gives rise to another decomposition of the same tensor $T$:
     \begin{equation*}
         T = \mqty(a p_{1,2} \\ a p_{2,2}) \otimes \mqty(b p_{3,2} \\ b p_{4,2}) \otimes \mqty(\frac{1}{ab}p_{5,2} \\ \frac{1}{ab}p_{6,2}) + \mqty(p_{1,1} \\ p_{2,1}) \otimes \mqty(p_{3,1} \\ p_{4,1}) \otimes \mqty(p_{5,1} \\ p_{6,1}).
     \end{equation*}
\end{eg}

\subsection{Rigidity interpretation of unique completions}
\paragraph{Local and global rigidity.}
Since each coordinate of $\mathbb{F}^M$ is indexed by an edge of $K^k_{\bn}$,
the projection $\pi_G$ of $\mathbb{F}^M$ to the linear subspace indexed by the edges of $G$ is defined for each subgraph $G$ of $K^k_{\bn}$.
The {\em $d$-dimensional rigidity map} $f_G^d$ of $G$ is then defined 
as $f_G^d=\pi_G\circ \sigma^d_{\bn}$, or more explicitly,
\[
f^d_G(\bp):=\pi_G\circ \sigma^d_{\bn}(\bp)=\left(\sum_{j=1}^d p_{i_1,j} p_{i_2,j}\dots p_{i_k,j}\right)_{(i_1,\dots, i_k)\in E(G)}
\qquad (\bp=(p_1,\dots, p_N)\in (\mathbb{F}^d)^N)
\]

A {\em $d$-dimensional framework} $(G,\bp)$ is a pair of a $k$-graph $G$ and $\bp\in (\mathbb{F}^d)^N$.
We say that a framework $(G,\bp)$ is {\em globally rigid} if every $\bq\in (\mathbb{F}^d)^N$ satisfying $f_G^d(\bp)=f_G^d(\bq)$ is congruent to $\bp$.
We say that $(G,\bp)$ is {\em locally rigid} if there is an open neighborhood $N_{\bp}$ of $\bp$ in $(\mathbb{F}^d)^N$
such that every $\bq\in N_{\bp}$ satisfying $f_G^d(\bp)=f_G^d(\bq)$ is congruent to $\bp$.

\paragraph{Unique recovery of tensors and global rigidity.} In terms of tensor completions, the projection map $\pi_G$ corresponds to the {\em masking map}. 
More specifically,  $\sigma_{\bn}^d(\bp)$ is a tensor of rank at most $d$
and $f_G^d(\bp)=\pi_G(\sigma_{\bn}^d(\bp))$ is a partially filled tensor 
whose entries indexed by $E(K_{\bn}^k)\setminus E(G)$ are missing.
Hence, for frameworks $(G,\bp), (G,\bq)$ with $f_G^d(\bp)=f_G^d(\bq)$, the corresponding tensors $\sigma^d_{\bn}(\bp)$
and $\sigma^d_{\bn}(\bq)$  have the same value on the entries indexed by the elements of $E(G)$.
So $(G,\bp)$ is globally rigid if and only if  
$\sigma^d_{\bn}(\bp)$ is the unique rank $d$ tensor which can be recovered from a partially filled tensor $\pi_G(\sigma_{\bn}^d(\bp))$.
In this way we can convert the unique recovery question of rank $d$ tensors to the problem of deciding the global rigidity of $d$-dimensional frameworks.

\begin{eg}\label{ex:2}
    Consider the same instance as Example~\ref{eg:congruent}
    and consider a $3$-graph $G=(V=\{1,2\} \cup \{3,4\} \cup \{5,6\}, E=\{(1,3,5),(1,4,6)\})$.
    The $2$-dimensional rigidity map of $G$ is written as
    \begin{equation*}
    f_G^2(\bp) =
    \kbordermatrix{
        &  \\
       (1,3,5) & p_{1,1}p_{3,1}p_{5,1}+p_{1,2}p_{3,2}p_{5,2} \\
       (1,4,6) & p_{1,1}p_{4,1}p_{6,1}+p_{1,2}p_{4,2}p_{6,2}
    }.
    \end{equation*}
    Consider another point configuration $\bp' = (p_1, p_2', p_3, p_4, p_5, p_6)$, where $p_2'\in \mathbb{F}^2$ is an arbitrary point with $p_2'\neq p_2$.
    Since $G$ has no edge containing $2$, we have 
    $f_G^d(\bq)=f_G^d(\bp')$. However, this configuration $\bp'$ is not congruent to $\bp$, which means that $(G,\bp)$ is neither locally nor globally rigid.
\end{eg}

Although we have defined the unique recovery problem in terms of graph rigidity, the formulation is equivalent to a standard formulation in machine learning context.
In machine learning, the tensor completion problem is often formulated as
the following optimization problem:
\[
\displaystyle{
\min_{\bq\in (\mathbb{R}^d)^N} 
\sum_{(i_1,\dots, i_k)\in E(G)} \left\|
T_{(i_1,\dots, i_k)} - \sum_{j=1}^d 
q_{i_1,j}q_{i_2,j}\cdots q_{i_k,j}  \right\|^2}.
\]
Let $\bp\in (\mathbb{R}^d)^N$ be such that $T=\sigma_{\bn}^d(\bp)$.
Then the global rigidity of $(G,\bp)$ is equivalent to saying that 
the above optimization problem has the unique solution (up to congruence),
and the local rigidity of $(G,\bq)$ is equivalent to
the finiteness of the optimizer set (modulo congruence).

\paragraph{Checking local rigidity.}
It is a well-known fact in rigidity theory that checking local rigidity can be reduced to a linear algebraic question if $\bp$ is assumed to be generic.
The same trick can be applied even in the current setting. 
Indeed, if $\bp$ is generic, then $\bp$ is a regular value of $f_G^d$, and hence by the inverse function theorem, 
the dimension of the fiber of $f_G^d$ at $f_G^d(\bp)$ is equal to the dimension of the kernel of the Jacobian $Jf_G^d(\bp)$ of $f_G^d$ at $\bp$.
Since $Jf_G^d$ is invariant over congruent point configurations and the set of point configurations congruent to $\bp$ forms a manifold of dimension $d(k-1)$
(as it can be identified with the stabilizer group),
we always have
\[
\rank Jf_G^d(\bp)\leq dN-d(k-1)
\]
and the equality holds if and only if 
only possible point configuration $\bq$ with $f_G^d(\bp)=f_G^d(\bq)$ is 
congruent to $\bp$ in a neighborhood of $\bp$. 
The following proposition summarizes this discussion.
\begin{prop}\label{prop:infinitesimal}
Let $G$ be a $k$-partite $k$-graph with $N$ vertices
and $(G,\bp)$ be a generic $d$-dimensional framework over $\mathbb{F}$.
Then $(G,\bp)$ is locally rigid in $\mathbb{F}^d$ if and only if 
$\rank Jf_G^d(\bp)=dN-d(k-1)$.
\end{prop}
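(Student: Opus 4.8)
# Proof Proposal for Proposition~\ref{prop:infinitesimal}

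\textbf{Overall strategy.} The plan is to invoke the standard machinery of rigidity theory: for a \emph{generic} point configuration, the dimension of the algebraic set $(f_G^d)^{-1}(f_G^d(\bp))$ near $\bp$ equals $\dim \ker Jf_G^d(\bp)$, and local rigidity is precisely the statement that this fiber, near $\bp$, is nothing more than the $d(k-1)$-dimensional congruence orbit of $\bp$. Since $f_G^d$ is a polynomial map with coefficients in $\mathbb{Q}$, a generic $\bp$ is a regular point of $f_G^d$ in the sense that $\rank Jf_G^d$ is constant (equal to its maximum) on a Zariski-open neighborhood of $\bp$; this is the step that converts a geometric finiteness statement into the linear-algebraic rank condition.

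\textbf{Step 1: The congruence orbit and the upper bound on the rank.} First I would recall, from the discussion preceding the proposition, that the set of configurations congruent to $\bp$ is the orbit of $\bp$ under the stabilizer group, a Lie group of dimension $d(k-1)$. For generic $\bp$ all the group elements act freely enough that this orbit is a smooth manifold of dimension exactly $d(k-1)$ through $\bp$. Since $f_G^d$ is constant on this orbit, its tangent space at $\bp$ lies in $\ker Jf_G^d(\bp)$, giving $\dim \ker Jf_G^d(\bp) \geq d(k-1)$, equivalently $\rank Jf_G^d(\bp) \leq dN - d(k-1)$, as already asserted in the text. This holds for \emph{every} $\bp$.

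\textbf{Step 2: Genericity and the constant-rank argument.} Because the entries of $\bp$ are algebraically independent over $\mathbb{Q}$, the point $\bp$ lies outside the zero set of every nonzero polynomial with rational coefficients; in particular $\rank Jf_G^d(\bp)$ attains the generic (maximum) value of the polynomial matrix $Jf_G^d$. Hence there is a Zariski-open (and in particular Euclidean-open) neighborhood $U$ of $\bp$ on which $Jf_G^d$ has constant rank. By the constant-rank theorem (in the smooth/analytic category over $\mathbb{R}$, and via the inverse/implicit function theorem over $\mathbb{C}$ treating the holomorphic map, or by passing to real coordinates), $f_G^d$ is locally equivalent to a linear projection on $U$, so the fiber $(f_G^d)^{-1}(f_G^d(\bp)) \cap U$ is a smooth submanifold of dimension exactly $\dim \ker Jf_G^d(\bp) = dN - \rank Jf_G^d(\bp)$.

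\textbf{Step 3: Equating the two conditions.} Now $(G,\bp)$ is locally rigid if and only if, in some neighborhood $N_{\bp}$, the fiber through $\bp$ coincides with the congruence orbit of $\bp$. Shrinking $N_{\bp} \subseteq U$, the fiber is a manifold of dimension $dN - \rank Jf_G^d(\bp)$ containing the orbit-manifold of dimension $d(k-1)$. If the ranks agree, i.e.\ $\rank Jf_G^d(\bp) = dN - d(k-1)$, then both manifolds have the same dimension and the orbit is (relatively) open and closed in the connected component of the fiber through $\bp$, forcing equality near $\bp$: local rigidity holds. Conversely, if $\rank Jf_G^d(\bp) < dN - d(k-1)$, the fiber has dimension strictly larger than $d(k-1)$, so it cannot be contained in the orbit in any neighborhood, and there exist arbitrarily close non-congruent $\bq$ with $f_G^d(\bq) = f_G^d(\bp)$: local rigidity fails. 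Combining the two directions gives the stated equivalence.

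\textbf{Main obstacle.} The only genuinely delicate point is making precise that for generic $\bp$ the congruence orbit through $\bp$ really is a smooth manifold of the full dimension $d(k-1)$ and that it locally \emph{fills out} a whole connected component of the fiber — i.e.\ ruling out that the fiber through $\bp$ is a higher-dimensional set that merely happens to contain the orbit. This is where genericity is used twice: once to get the constant rank (hence smoothness and correct dimension of the fiber), and once to guarantee the stabilizer action is locally free at $\bp$ so the orbit has the expected dimension. Over $\mathbb{C}$ one must also be slightly careful that "dimension of the fiber" is interpreted as complex dimension throughout and that the inverse function theorem is applied in the holomorphic category; over $\mathbb{R}$ the analytic constant-rank theorem suffices. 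All of this is standard in the rigidity literature (e.g.\ the Asimow--Roth type argument), and I would cite it accordingly rather than reprove it from scratch.
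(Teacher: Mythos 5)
Your proposal is correct and follows essentially the same route as the paper, which establishes Proposition~\ref{prop:infinitesimal} by the informal discussion preceding it: genericity gives maximal (hence locally constant) rank of $Jf_G^d$, the inverse/constant-rank theorem identifies the local fiber dimension with $\dim\ker Jf_G^d(\bp)$, and comparing this with the $d(k-1)$-dimensional congruence orbit yields the equivalence. Your write-up merely makes explicit the Asimow--Roth style details the paper leaves implicit.
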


\begin{eg}
Consider the same instance as that in Example~\ref{ex:2},
where $V(G) = \{ {1}, {2} \} \cup \{ {3}, {4} \} \cup \{ {5}, {6} \}$
and $E(G)=\{(1,3,5),(1,4,6)\}$.
The Jacobian $J f_G^2(\bp)$ of $f_G^2$ at $\bp$ is 
\begin{equation*}
        \scalebox{0.9}{$
        \kbordermatrix{\setlength{\kbcolsep}{0pt}
       &    p_{1,1}&p_{1,2}&p_{2,1}&p_{2,2}&p_{3,1}&p_{3,2}&p_{4,1}&p_{4,2}&p_{5,1}&p_{5,2}&p_{6,1}&p_{6,2}\\
       &p_{3,1} p_{5,1} & p_{3,2} p_{5,2} & 0  &  0 & p_{1,1} p_{5,1} & p_{1,2} p_{5,2} & 0  & 0  & p_{1,1} p_{3,1} & p_{1,2} p_{3,2} & 0  & 0 \\
       &p_{4,1} p_{6,1} & p_{4,2} p_{6,2} & 0  & 0  & 0  & 0  & p_{1,1} p_{6,1} & p_{1,2} p_{6,2} &  0 &  0 & p_{1,1} p_{4,1} & p_{1,2} p_{4,2}
        }$},
    \end{equation*}  
    which does not satisfy the rank criterion of Proposition~\ref{prop:infinitesimal}. 
    On the other hand,  the Jacobian $J f_{K_{\bn}^k}^2(\bp)$ 
    of $f_{K_{\bn}^k}^2$ at $\bp$ is
    \begin{equation*}
        \scalebox{0.85}{$
        \kbordermatrix{\setlength{\kbcolsep}{0pt}
       &    p_{1,1}&p_{1,2}&p_{2,1}&p_{2,2}&p_{3,1}&p_{3,2}&p_{4,1}&p_{4,2}&p_{5,1}&p_{5,2}&p_{6,1}&p_{6,2}\\
       &p_{3,1} p_{5,1} & p_{3,2} p_{5,2} & 0  & 0  & p_{1,1} p_{5,1} & p_{1,2} p_{5,2} & 0  &  0 & p_{1,1} p_{3,1} & p_{1,2} p_{3,2} & 0  & 0 \\
       &p_{3,1} p_{6,1} & p_{3,2} p_{6,2} &  0 & 0  & p_{1,1} p_{6,1} & p_{1,2} p_{6,2} &  0 & 0  & 0  &  0 & p_{1,1} p_{3,1} & p_{1,2} p_{3,2}\\
       &p_{4,1} p_{5,1} & p_{4,2} p_{5,2} & 0  & 0  &  0 &   0& p_{1,1} p_{5,1} & p_{1,2} p_{5,2} & p_{1,1} p_{4,1} & p_{1,2} p_{4,2} &  0 & 0 \\
       &p_{4,1} p_{6,1} & p_{4,2} p_{6,2} & 0  &  0 &  0 &  0 & p_{1,1} p_{6,1} & p_{1,2} p_{6,2} &  0 & 0  & p_{1,1} p_{4,1} & p_{1,2} p_{4,2}\\
       & 0 &  0 & p_{3,1} p_{5,1} & p_{3,2} p_{5,2} & p_{2,1} p_{5,1} & p_{2,2} p_{5,2} & 0  &  0 & p_{2,1} p_{3,1} & p_{2,2} p_{3,2} &  0 &0  \\
       & 0 & 0  & p_{3,1} p_{6,1} & p_{3,2} p_{6,2} & p_{2,1} p_{6,1} & p_{2,2} p_{6,2} & 0  & 0  &  0 &  0 & p_{2,1} p_{3,1} & p_{2,2} p_{3,2}\\
       & 0 & 0  & p_{4,1} p_{5,1} & p_{4,2} p_{5,2} & 0  & 0  & p_{2,1} p_{5,1} & p_{2,2} p_{5,2} & p_{2,1} p_{4,1} & p_{2,2} p_{4,2} &  0 & 0 \\
       & 0 &0   & p_{4,1} p_{6,1} & p_{4,2} p_{6,2} &  0 &  0 & p_{2,1} p_{6,1} & p_{2,2} p_{6,2} & 0  & 0  & p_{2,1} p_{4,1} & p_{2,2} p_{4,2}
        }
        $}.
    \end{equation*}    
    We can check its full rankness by randomly assigning values to the entries of $\bp$ and confirming its rank to be 8, which coincides with $dN-d(k-1)=8$.
    Thus, Proposition~\ref{prop:infinitesimal} asserts that $(K_{\bn}^k,\bp)$ is locally rigid in $\mathbb{F}^2$ if $\bp$ is generic.
\end{eg}

\paragraph{Generic Rigidity.}
We say that a $k$-partite $k$-graph $G$ is {\em globally rigid} (resp., {\em locally rigid}) in $\mathbb{F}^d$ if 
$(G,\bp)$ is globally rigid (resp., locally rigid) for all generic $d$-dimensional point configurations $\bp$ over $\mathbb{F}$.
This definition is motivated by the following fundamental concept in rigidity theory, {\em  generic rigidity}, due to Asimov and Roth~\cite{asimow}.

Proposition~\ref{prop:infinitesimal} implies that,
if $(G,\bp)$ is locally rigid for a generic $d$-dimensional point configuration $\bp$, then
$(G,\bq)$ is locally rigid for {\em all} generic point configurations $\bq$.
In this sense,  local rigidity is determined by underlying hypergraphs as long as point configurations are assumed to be generic.
Motivated by this fact, 
we have defined the local rigidity of a $k$-graph $G$  in $\mathbb{F}^d$ such that $(G,\bp)$ is locally rigid for a (or equivalently, any) generic $d$-dimensional point configuration $\bp$.

Establishing the global rigidity counterpart of this property is a subtle question. By using Chevalley's theorem, one can show that 
if $(G,\bp)$ is globally rigid for a generic $\bd$-dimensional point configuration $\bp$ over $\mathbb{C}$, then
$(G,\bq)$ is globally rigid for all generic $d$-dimensional point configurations $\bq$ over $\mathbb{C}$. See \cite{Gortler2014} for more details.
However, this may not be the case when the underlying field is $\mathbb{R}$. See 
\cite{GHT10} for a positive result in the Euclidean distance rigidity problem and \cite{JJTunique} for a negative result in the matrix completion problem.
Nevertheless, it is still convenient to say that a $k$-partite $k$-graph $G$ is globally rigid in $\mathbb{F}^d$ if 
$(G,\bp)$ is globally rigid for all generic $d$-dimensional point configurations $\bp$ over $\mathbb{F}$.

\paragraph{Unique recovery from random sampling.}
Our main theorem (Theorem~\ref{thm:main_tensor_completion}) is about the sampling complicity of the unique recovery of a rank $d$ tensor $T$ when sampling is done uniformly randomly.
In our formulation, 
this corresponds to looking at the masking map $\pi_G$
of  the Erd{\"o}s--Reyni type random subgraph $G=G(n,p)$ or $G(n,m)$ defined in Section~\ref{subsec:notation}.
Hence, the unique recovery problem of a rank $d$ tensor $T$ is equivalent to asking the global rigidity of $(G(n,p),\bp)$ or $(G(n,m),\bp)$ 
for a $d$-dimensional point configuration $\bp$ with $T=\sigma^d_{\bn}(\bp)$.
Since we further assume  that $T$ is generic in Theorem~\ref{thm:main_tensor_completion},
Theorem~\ref{thm:main_tensor_completion} follows by analyzing the probability threshold 
for the global rigidity of $G(n,p)$ or $G(n,m)$.
Thus, in the remaining of the paper, we focus on the latter problem.

\paragraph{Generic rigidity matroid.}
The language of matroid theory will be useful for further investigating the combinatorics of rigidity.
The {\em generic tensor rigidity matroid}
${\cal T}_{\bn,d}$ is defined 
as the row matroid of $J\sigma^d_{\bn}(\bp)$ at a (or equivalently, any) generic point configuration $\bp\in (\mathbb{F}^d)^N$.
Note that each row of $J\sigma^d_{\bn}(\bp)$ is indexed by an edge of $K_{\bn}^k$,
so we may consider ${\cal T}_{\bn,d}$ as a matroid on $E(K_{\bn}^k)$.
In view of Proposition~\ref{prop:infinitesimal},
$G\subset K_{\bn}^k$ is locally rigid if and only if the rank of $E(G)$ in  ${\cal T}_{\bn,d}$ is $dN-d(k-1)$.

\subsection{1-dimensional local rigidity}\label{subsec:1d_local}
As a simplest example, in this subsection we shall look at the 1-dimensional local rigidity.

Let $G$ be a $k$-partite $k$-graph.
By Proposition~\ref{prop:infinitesimal}, 
$G$ is locally rigid in $\mathbb{F}^d$ if and only if
$\rank Jf_G^d(\bp)=dN-d(k-1)$ for a generic point configuration $\bp$.
When $d=1$, a special structure of $Jf_G^d(\bp)$ can be exploited to simplify the problem.

We introduce the incidence matrix $I_G$ of $G$ to be 
the matrix of size $|V(G)|\times |E(G)|$
whose each row is indexed by a vertex, each column is indexed by an edge,
and each entry is given by
\[
I_G[v,e]=\begin{cases}
1 & (v\in e)\\
0 & (\text{otherwise})
\end{cases}\qquad (v\in V(G), e\in E(G)).
\]
When $G$ is a $k$-partite, we always have 
\begin{equation}\label{eq:IG_lower}
\dim \ker I_G^{\top}
\geq k-1.
\end{equation}
Indeed, if $V_1,\dots, V_k$ denote the $k$-partition of the vertex set $V(G)$, 
the vector $\bx_i\in \mathbb{F}^{N}$ 
defined by 
\begin{equation}\label{eq:1d_trivial}
\bx_i(v)=\begin{cases}
    1 & (v\in V_1) \\
    -1 & (v\in V_i) \\
    0 & (v\in V(G)\setminus (V_1\cup V_i) )
    \end{cases}
\end{equation}
belongs to $\ker I_G^{\top}$ for $i=2,\dots, k$.
Since $\bx_2,\dots, \bx_{k}$ are linearly independent, 
we have (\ref{eq:IG_lower}).
We call a linear combination of $\bx_2,\dots, \bx_{k}$   a {\em trivial kernel vector} of $I_G$,
and the space of all trivial kernel vectors is called the {\em trivial kernel} of $I_G$.

\begin{lemma}\label{lem:1d_local}
Let $G$ be a $k$-partite $k$-graph with $N$ vertices.
Then $G$ is locally rigid in $\mathbb{F}^1$ if and only if $\rank_{\mathbb{F}} I_G=N-(k-1)$.    
\end{lemma}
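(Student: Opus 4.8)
The plan is to connect the rank of the rigidity Jacobian $Jf_G^1(\bp)$ to the rank of the incidence matrix $I_G$ via an explicit factorization. When $d=1$, a point configuration is just $\bp=(p_1,\dots,p_N)\in \mathbb{F}^N$ (identifying $(\mathbb{F}^1)^N$ with $\mathbb{F}^N$), and the rigidity map has the monomial form $f_G^1(\bp)=\left(\prod_{i\in e}p_i\right)_{e\in E(G)}$. First I would compute the Jacobian: the partial derivative of the $e$-th coordinate with respect to $p_v$ is $\prod_{i\in e\setminus\{v\}}p_i$ if $v\in e$ and $0$ otherwise. Writing $P_e=\prod_{i\in e}p_i$ for the edge product (which is nonzero for generic $\bp$), this entry equals $I_G[v,e]\cdot P_e/p_v$. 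Hence $Jf_G^1(\bp)^{\top}=\diag(1/p_v)_{v\in V}\cdot I_G\cdot \diag(P_e)_{e\in E}$, a product of $I_G$ with two invertible diagonal matrices (invertible precisely because $\bp$ is generic, so all $p_v$ and all $P_e$ are nonzero). Therefore $\rank Jf_G^1(\bp)=\rank I_G$ for generic $\bp$, and in particular the generic value of $\rank I_G$ is just $\rank_{\mathbb{F}} I_G$ since $I_G$ is a constant $0/1$ matrix.

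Given this identity, the lemma follows from Proposition~\ref{prop:infinitesimal} specialized to $d=1$: $(G,\bp)$ is locally rigid in $\mathbb{F}^1$ (for generic $\bp$, equivalently $G$ is locally rigid in $\mathbb{F}^1$) if and only if $\rank Jf_G^1(\bp)=dN-d(k-1)=N-(k-1)$. Substituting $\rank Jf_G^1(\bp)=\rank_{\mathbb{F}} I_G$ yields the claimed criterion $\rank_{\mathbb{F}} I_G=N-(k-1)$. I should also remark that this is consistent with the general bound $\rank Jf_G^d(\bp)\leq dN-d(k-1)$ established before Proposition~\ref{prop:infinitesimal}: indeed $\rank I_G\leq N-\dim\ker I_G^{\top}\leq N-(k-1)$ by the trivial kernel vectors $\bx_2,\dots,\bx_k$ of Equation~\eqref{eq:IG_lower}, so the equality case of the rank bound is exactly the statement that the trivial kernel of $I_G$ is all of $\ker I_G^{\top}$.

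The only genuinely delicate point — and the step I would be most careful about — is the passage from "there exists a generic $\bp$ with $\rank Jf_G^1(\bp)=\rank I_G$" to "$(G,\bp)$ is locally rigid," which requires that a generic $\bp$ be a regular value of $f_G^1$ so that the inverse function theorem argument underlying Proposition~\ref{prop:infinitesimal} applies. This is already subsumed in Proposition~\ref{prop:infinitesimal}, whose hypothesis is exactly genericity of $\bp$; the factorization shows the Jacobian rank is constant (equal to $\rank_{\mathbb{F}} I_G$) on the Zariski-open set where all $p_v\neq 0$, which contains all generic points, so no further work is needed. One minor subtlety worth a sentence: the diagonal scaling argument proves $\rank Jf_G^1(\bp)=\rank I_G$ only when the $p_v$ are nonzero, but that holds for every generic configuration, which is all that Proposition~\ref{prop:infinitesimal} and the definition of generic local rigidity of $G$ demand. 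Assembling these observations gives the lemma with essentially no computation beyond writing down the Jacobian entry and factoring out the two diagonal matrices.
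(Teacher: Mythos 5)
Your proof is correct and follows essentially the same route as the paper's: compute the Jacobian entries of $f_G^1$, observe that multiplying by invertible diagonal matrices (valid since generic $\bp$ has all $p_v\neq 0$) converts $Jf_G^1(\bp)^{\top}$ into $I_G$, and conclude via Proposition~\ref{prop:infinitesimal} with $d=1$. The extra remarks on regularity and the trivial kernel are consistent with the paper but not needed beyond what Proposition~\ref{prop:infinitesimal} already provides.
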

\begin{proof}
Pick a generic one-dimensional point-configuration $\bp=(p_1,p_2,\dots,p_N)\in \mathbb{F}^N$.
Then 
\[
(Jf_G^1(\bp))[e,u]=\begin{cases}
\prod_{v: v\in e\setminus\{u\}} p_v & (u\in e)\\
0 & (\text{otherwise}).
\end{cases}
\]
Hence, if we multiply each column of $u$ by $p_u$
and then divide each row of $e$ by $\prod_{v\in e} p_v$,
 $Jf_G^1(\bp)$ is converted to $I_G^{\top}$.
Thus the lemma follows.
\end{proof}

The 1-dimensional global rigidity will be discussed in Section~\ref{sec:1d}.
Characterizing higher dimensional local/global rigidity turns out to be a much harder question. See~\cite{cruickshank2023identifiability} for more details.

\begin{eg}\label{eg:1d_local}
    Let us consider the case when 
    $d=1, k=3,\bn=(2,2,2)$, $G=(V=\{1,2\}\cup\{3,4\}\cup\{5,6\}, 
    E=\{(1,3,5),(1,3,6),(1,4,6),(2,3,6),(2,4,5)\}$,
    and $\bp=(p_1,\dots, p_6)\in \mathbb{F}^6$.
    The transpose of the Jacobian of the rigidity map is 
    \begin{equation*}
    J f_G^1(\bp)^{\top} = \kbordermatrix{
       & (1,3,5)&(1,3,6)&(1,4,6)&(2,3,6)&(2,4,5) \\
    1&p_{3} p_{5} & p_{3} p_{6}  & p_{4} p_{6} &  0 &  0  &  \\
    2& 0 & 0 & 0 & p_{3} p_{6} & p_{4} p_{6} \\
    3& p_{1} p_{5}  & p_{1} p_{6} & 0 & p_{2} p_{5} & 0 \\
    4& 0 & 0 &  p_{1} p_{6} & 0 & p_{2} p_{5} \\
    5& p_{1} p_{3} & 0 & 0 & 0 & p_{2} p_{4} \\
    6& 0 & p_{1} p_{3} & p_{1} p_{4} & p_{2} p_{3} & 0 
    },
    \end{equation*}
    which can be converted to the incidence matrix 
    \begin{equation}\label{eq:IGexample}        
    I_G = \kbordermatrix{
    & (1,3,5)&(1,3,6)&(1,4,6)&(2,3,6)&(2,4,5) \\
    1&1 & 1 & 1  & 0   & 0    \\
    2&0 & 0 & 0  & 1   & 1    \\
    3&1 & 1 & 0  & 1   & 0    \\
    4&0 & 0 & 1  & 0   & 1    \\
    5&1 & 0 & 0  & 0   & 1    \\
    6&0 & 1 & 1  & 1   & 0    
    }
    \end{equation}
    by elementary row and column operations. Since $\rank_{\mathbb{F}} I_G=4=N-(k-1)$, Lemma~\ref{lem:1d_local} asserts that $G$ is locally rigid in $\mathbb{F}^1$.
\end{eg}

\section{Identifiability of Segre Varieties and Generic Rigidity}\label{sec:iden}
In this section, we shall review useful tools from algebraic geometry for analyzing rigidity.
The following material is essentially a specialization of  
the discussion in \cite{cruickshank2023identifiability,sugiyama} to Segre varieties,
but several combinatorial interpretations based on special structures of tensor completions are new.
We shall also provide proofs of key propositions 
to explain the idea behind the statements.  


\paragraph{Identifiability.}
Let ${\cal V}$ be an affine variety in $\mathbb{C}^m$ and suppose that 
${\cal V}$ is a cone.
${\cal V}$ is called {\em $d$-identifiable} if any generic point 
$x$ in the $d$-secant $S_d({\cal V})$ of $\cal V$ can be written as 
$x=\sum_{k=1}^d x_k$ for unique $x_1,\dots, x_d\in {\cal V}$ up to scaling of each $x_i$ and permutation of indices.

The following proposition is the first main step in our analysis.  
It reduces the $d$-dimensional global rigidity problem to two questions: 
1-dimensional global rigidity and the $d$-identifiability of $\overline{\pi_G({\rm im}\ \sigma_{\bn})}$.
The remaining of this paper is solely devoted to how to solve those two questions.


\begin{prop}\label{prop:iden_global}
Let $\mathbb{F}\in \{\mathbb{R},\mathbb{C}\}$, $d$ be an integer with $d\geq 1$,
and
$G$ be a $k$-partite $k$-graph with $G\subseteq K_{\bn}^k$ and $N$ vertices.
Then $G$ is globally rigid in $\mathbb{F}^d$
if $G$ is globally rigid in $\mathbb{F}^1$ and $\overline{\pi_G({\rm im}\ \sigma_{\bn})}$ is $d$-identifiable over $\mathbb{C}$.
\end{prop}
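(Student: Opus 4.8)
The plan is to argue contrapositively in spirit: assume $\bp$ is a generic $d$-dimensional point configuration, suppose $(G,\bp)$ is not globally rigid in $\mathbb{F}^d$, and derive a failure of one of the two hypotheses. So let $\bq \in (\mathbb{F}^d)^N$ satisfy $f_G^d(\bp) = f_G^d(\bq)$ with $\bq$ not congruent to $\bp$. The first step is to lift this to the Segre picture: since $f_G^d = \pi_G \circ \sigma_{\bn}^d$ and $\sigma_{\bn}^d$ is the $d$-fold sum of $\sigma_{\bn}$, the point $x := f_G^d(\bp)$ lies in the $d$-secant of $\overline{\pi_G(\mathrm{im}\,\sigma_{\bn})}$, and the two decompositions coming from the $d$ coordinate vectors of $\bp$ and of $\bq$ both express $x$ as a sum of $d$ points on $\overline{\pi_G(\mathrm{im}\,\sigma_{\bn})}$. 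The key point to establish here is that $x$ is a \emph{generic} point of $S_d(\overline{\pi_G(\mathrm{im}\,\sigma_{\bn})})$ whenever $\bp$ is generic; this follows because $\bp \mapsto f_G^d(\bp)$ is a dominant map (its image is Zariski dense in the $d$-secant by definition of the secant as a closure of spans of $d$ points), so the image of a generic point is generic in the target.

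The second step invokes $d$-identifiability of $\overline{\pi_G(\mathrm{im}\,\sigma_{\bn})}$. By definition, a generic point of the $d$-secant has a \emph{unique} decomposition into $d$ points of the variety, up to scaling each summand and permuting. Hence the $d$ coordinate vectors of $\pi_G(\sigma_{\bn}(\cdot))$ obtained from $\bp$ agree, after a permutation $\Sigma$ of the $d$ indices and a rescaling by scalars $\mu_1, \dots, \mu_d$, with those obtained from $\bq$. Unwinding this: for each coordinate $j$ and each vertex class, the projection $\pi_G$ applied to the rank-one Segre image of the $j$-th coordinate vector of $\bp$ equals that applied to the $\Sigma(j)$-th coordinate vector of $\bq$ rescaled by $\mu_j$. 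The third step reads this one coordinate index at a time as a statement of \emph{$1$-dimensional} global rigidity: fixing $j$, the $j$-th coordinate vectors of $\bp$ and of $\bq$ (after permutation) are two $1$-dimensional point configurations whose images under $f_G^1$ agree up to an overall scalar $\mu_j$, and scaling a $1$-dimensional configuration by a constant is exactly the action of a $1$-dimensional stabilizer on $f_G^1$ in the $k$-partite setting (distribute $\mu_j$ among the $k$ classes). Since $\bp$ is generic, each of its coordinate vectors is a generic $1$-dimensional configuration, so global rigidity of $G$ in $\mathbb{F}^1$ forces the $j$-th coordinate vector of $\bq$ (permuted) to be congruent to that of $\bp$ — i.e., obtained by a $1$-dimensional stabilizer.

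The final step assembles these per-coordinate congruences into a single $d$-dimensional stabilizer taking $\bp$ to $\bq$, contradicting the assumption that they are not congruent. Here one collects, for each vertex class $V_i$, the diagonal matrix recording the $d$ scalings from the $d$ coordinates together with the common permutation $\Sigma$; one must check the product-of-diagonals condition $\prod_i D_i = I_d$ holds, which follows from the fact that the rescalings $\mu_j$ cancel when the rank-one tensor is reassembled (the scaling freedom in a rank-one term has $\prod_j \lambda^j = 1$). I expect the \textbf{main obstacle} to be the bookkeeping in steps two and three: carefully justifying that $d$-identifiability of the \emph{projected} variety $\overline{\pi_G(\mathrm{im}\,\sigma_{\bn})}$ yields a genuine decomposition-level uniqueness for the point $f_G^d(\bp)$ — in particular handling the subtlety that $\pi_G$ may collapse distinct rank-one Segre points, so that ``unique decomposition in the projected variety'' must be translated back correctly into a statement comparing coordinate vectors of $\bp$ and $\bq$ — and then verifying that the genericity of $\bp$ is preserved under all the reductions so that Proposition~\ref{prop:infinitesimal}-style generic arguments and the definition of $1$-dimensional global rigidity of $G$ apply verbatim. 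The real/complex distinction needs a remark as well, since the congruence group and genericity behave slightly differently over $\mathbb{R}$, but the argument above only uses the stabilizer action and thus goes through for both fields.
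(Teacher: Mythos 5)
Your proposal is correct and follows essentially the same route as the paper's proof: genericity of $\bp$ makes $f_G^d(\bp)$ a generic point of the $d$-secant of $\overline{\pi_G(\mathrm{im}\,\sigma_{\bn})}$, $d$-identifiability pins down the decomposition into the $d$ summands $f_G^1(\bp_j)$ up to permutation and scaling, and $1$-dimensional global rigidity then recovers each coordinate vector up to congruence. The extra bookkeeping you supply (dominance of $f_G^d$ onto the secant, absorbing the scalars $\mu_j$ into stabilizers, and assembling the per-coordinate congruences into a single $d$-dimensional stabilizer) is exactly the detail the paper's terse proof leaves implicit, and it is sound.
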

\begin{proof}
Consider a generic point configuration $\bp \in (\mathbb{F}^d)^N$.
Note that $\bp$ is also a generic over $\mathbb{C}$ even if $\mathbb{F}=\mathbb{R}$.
Our goal is to show that an element in the fiber of $f_G^d$ at $f_G^d(\bp)$ is uniquely determined up to congruence by using the assumption of the statement.

Let $\bp_i$ be the $i$-th coordinate vector of $\bp$ for each $i$,
that is, the $n$-dimensional vector consisting of the $i$-th coordinates of the $n$ points of $\bp$.
Since $\bp$ is generic over $\mathbb{C}$, $f_G^d(\bp)$ is a generic point in the $d$-secant of $\overline{\pi_G({\rm im}\ \sigma_{\bn})}$ over $\mathbb{C}$.
Hence, the $d$-identifiability of $\overline{\pi_G({\rm im}\ \sigma_{\bn})}$ determines how $f_G^d(\bp)$ can be written as the sum of $d$ points in $\overline{\pi_G({\rm im}\ \sigma_{\bn})}$.
Those $d$ points are the images $f^1_G(\bp_1),\dots, f^1_G(\bp_d)$ of the $1$-dimensional rigidity map.
The global rigidity of $G$ in 1-dimensional space over $\mathbb{F}$ implies that 
the fiber of $f^1_G$ at $f^1_G(\bp_i)$ contains only $\bp_i$ up to congruence (over $\mathbb{F}$) for each $i$.
Thus, the fiber of $f_G^d$ at $f_G^d(\bp)$ is also uniquely determined up to congruence over $\mathbb{F}$.
\end{proof}

\paragraph{Tangential weak defectiveness.}
In view of Proposition~\ref{prop:iden_global},
checking the $d$-identifiability of $\overline{\pi_G({\rm im}\ \sigma_{\bn})}$ is a key to understand global rigidity.
To check $d$-identifiability, we shall use tangential weak defectiveness by Chiantini and Ottaviani~\cite{CO2012}.

Let us first define tangential weak defectiveness.
Let ${\cal V}\subseteq \mathbb{C}^m$ be the affine cone of a projective variety and $x_1,\dots, x_d$ be generic points in ${\cal V}$.
The {\em $d$-tangent contact locus} at $x_1,\dots, x_d$ is defined by
\[
C_{x_1,\dots, x_d}{\cal V}:=\overline{\{y\in {\cal V}: T_y{\cal V} \subset \langle T_{x_1} {\cal V}, \dots, T_{x_d} {\cal V}\rangle \}}.
\]
Since ${\cal V}$ is a cone, $\dim C_{x_1,\dots, x_d}{\cal V}\geq 1$.
${\cal V}$ is  {\em $d$-tangentially weakly defective} if 
an irreducible component of $C_{x_1,\dots, x_d}{\cal V}$ that contains $x_i$ has dimension at least two for some $i$.

Chiantini and Ottaviani~\cite{CO2012} showed that, if  ${\cal V}$ is not $d$-tangentially weakly defective, then ${\cal V}$ is $d$-identifiable.
By specializing it to our case, we have the following.
\begin{prop}\label{prop:twnd_iden}
If $\overline{\pi_G({\rm im}\ \sigma_n)}$ is not $d$-tangentially weakly defective,
then  $\overline{\pi_G({\rm im}\ \sigma_n)}$ is $d$-identifiable.
\end{prop}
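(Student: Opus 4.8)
The plan is to obtain Proposition~\ref{prop:twnd_iden} as a direct specialization of the criterion of Chiantini and Ottaviani~\cite{CO2012} quoted just above, applied to the affine cone $\mathcal{V}:=\overline{\pi_G(\im\ \sigma_{\bn})}\subseteq\mathbb{C}^{E(G)}$. That criterion guarantees $d$-identifiability for an irreducible, nondegenerate affine cone that is not $d$-tangentially weakly defective, so the whole proof reduces to checking that these standing hypotheses hold for $\mathcal{V}$; the hypothesis on tangential weak defectiveness is exactly what is assumed in the statement, so only the three structural hypotheses remain.

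First I would verify those structural hypotheses. \emph{Cone:} since $\sigma_{\bn}(\lambda\bx)=\lambda^k\sigma_{\bn}(\bx)$ and every element of $\mathbb{C}$ is a $k$-th power, $\im\ \sigma_{\bn}$ is closed under scalar multiplication, and this property is preserved by the linear map $\pi_G$ and by Zariski closure. \emph{Irreducibility:} $\im\ \sigma_{\bn}$ is the image of the irreducible variety $\mathbb{C}^N$ under a morphism, hence irreducible, and $\pi_G$ (linear) and Zariski closure again preserve irreducibility, so $\mathcal{V}$ is irreducible. \emph{Nondegeneracy:} for each edge $e\in E(G)$, the $\{0,1\}$-indicator vector $\bx_e\in\mathbb{C}^N$ of the vertex set of $e$ satisfies $\sigma_{\bn}(\bx_e)=$ the standard basis vector of $\mathbb{C}^{E(K_{\bn}^k)}$ supported on the coordinate $e$ (using that $V_1,\dots,V_k$ are pairwise disjoint); applying $\pi_G$, all standard basis vectors of $\mathbb{C}^{E(G)}$ lie in $\pi_G(\im\ \sigma_{\bn})\subseteq\mathcal{V}$, so $\mathcal{V}$ spans its ambient space. (Alternatively, nondegeneracy can be arranged for free by passing to the linear span of $\mathcal{V}$, which changes nothing about identifiability.)

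I do not expect a genuine obstacle here: the mathematical content sits entirely in~\cite{CO2012}, and this proposition is a change of language. The one point that deserves a word of care is that some formulations of the criterion additionally require $\mathcal{V}$ not to be $d$-defective (i.e., $S_d(\mathcal{V})$ has the expected dimension). This is not a true extra assumption in our situation: failure of $d$-tangential weak defectiveness already forces the generic fibres of the $d$-secant map of $\mathcal{V}$ to be finite, hence $S_d(\mathcal{V})$ to have expected dimension; and, in any case, in every subsequent use of this proposition the ambient dimension $|E(G)|$ will far exceed the expected secant dimension $dN-d(k-1)$ (recall $d$ and $k$ are constant while $|E(G)|\sim n\log n$), so we always stay in the subgeneric regime in which the criterion applies without qualification. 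I would also stress, to calibrate expectations, that we never check non-$d$-tangential-weak-defectiveness of $\mathcal{V}$ directly from the definition: that verification is the real work, and is deferred to the combinatorial reformulation of this condition (Theorem~\ref{thm:MM_test}) and to the polymatroid developed in Section~\ref{sec:main}.
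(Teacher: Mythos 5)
Your proposal is correct and follows exactly the route the paper takes: Proposition~\ref{prop:twnd_iden} is presented there as an immediate specialization of the Chiantini--Ottaviani criterion with no further argument, and your verification of the cone/irreducibility/nondegeneracy hypotheses (plus the remark on $d$-defectiveness) simply makes explicit the routine checks the paper leaves implicit.
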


\paragraph{Testing tangential weak non-defectiveness.}
We now adapt the linear algebraic test for
$d$-tangentially weakly non-defective due to 
Bocci, Chiantini, Ottaviani, and Vannieuwenhoven~\cite{BCC,COV2014,chiantini2017generic}. 
Interestingly, an adaptation of their idea to combinatorial projections of Segre varieties gives a new spectral graph theoretical condition.

Let $(G,\bp=(p_1,\dots, p_N))$ be a $d$-dimensional framework
with a  $k$-graph $G\subseteq K_{\bn}^k$ and $N$ vertices.
For an edge weight function $\omega:E(G)\rightarrow \mathbb{F}$,
the {\em first coordinated adjacency matrix} $A_{\omega}^1$ of $(G,\bp)$ is defined to be an $N\times N$ matrix whose
$(i,j)$-th entry is given by 
\[
A_{\omega}^1[i,j]=\begin{cases}
0 & (\text{if } i=j) \\
{\displaystyle \sum_{e\in E(G): \{i,j\}\subseteq e} \omega(e)
\left(\prod_{v\in e\setminus \{i,j\}} p_{v,1} \right)}
& (\text{otherwise})
\end{cases}
\qquad (i,j\in \{1,\dots, N\}),
\]
where $p_{v,1}$ denotes the first entry of $p_v\in \mathbb{F}^d$.

Since each row of $Jf_G^d(\bp)$ is indexed by an edge of $G$,
we may regard an element in the left kernel of $Jf_G^d(\bp)$ 
as an edge weight $\omega:E(G)\rightarrow \mathbb{F}$ of $G$.
We are particularly interested in the first coordinated adjacency matrices $A_{\omega}^1$ for an element $\omega$ in the left kernel of $Jf_G^d(\bp)$.

\begin{eg}\label{ex:A1}
Consider the same setting as that in Example~\ref{eg:1d_local}.
An edge weight $\omega=(\omega_e)_{e\in E(G)}$  of $G$ given by 
\begin{equation*}
    \kbordermatrix{
    & \omega_{135} & \omega_{136} & \omega_{146} & \omega_{236} & \omega_{245} \\
    & -\frac{1}{p_{1}p_{3}p_{5}} & \frac{2}{p_{1}p_{3}p_{6}} & -\frac{1}{p_{1}p_{4}p_{6}} & -\frac{1}{p_{2,1}p_{3}p_{6}} & \frac{1}{p_{2}p_{4}p_{5}}
    }
\end{equation*}
is in the left kernel of $Jf_G^d(\bp)$.
(Note that, in this example, $d=1$. So $p_{i,1}=p_i$.)
The corresponding $A_\omega^1$  is 
\begin{equation*}
A_{\omega}^1=
\kbordermatrix{
& 1 & 2 & 3 & 4 & 5 & 6 \\
1 & 0 & 0 & \omega_{135}p_{5} + \omega_{136}p_{6} & \omega_{146}p_{6} & \omega_{135}p_{3} & \omega_{136}p_{3} + \omega_{146}p_{4} \\
2 & 0 & 0 & \omega_{236}p_{6} & \omega_{245}p_{5} & \omega_{245}p_{4} & \omega_{236}p_{3} \\
3 & \omega_{135}p_{5}+\omega_{136}p_{6} & \omega_{236}p_{6} & 0 & 0 & \omega_{135}p_{1} & \omega_{136}p_{1}+\omega_{236}p_{2} \\
4 & \omega_{146}p_{6} & \omega_{245}p_{5} & 0 & 0 & \omega_{245}p_{2} & \omega_{146}p_{1} \\
5 & \omega_{135}p_{3} & \omega_{245}p_{4} & \omega_{135}p_{1} & \omega_{245}p_{2} & 0 & 0 \\
6 & \omega_{136}p_{3}+\omega_{146}p_{4} & \omega_{236}p_{3} & \omega_{136}p_1+\omega_{236}p_{2} & \omega_{146}p_{1} & 0 & 0
}.
\end{equation*}
\end{eg}

\begin{prop}\label{prop:lowerbound}
Let $(G,\bp)$ be a generic $d$-dimensional framework.
Then,
\begin{equation}\label{eq:kernel_lower}
\dim \left(\bigcap_{\omega\in \ker Jf_G^d(\bp)^{\top}} \ker A_{\omega}^1\right)\geq k,
\end{equation}
\end{prop}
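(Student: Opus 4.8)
The plan is to exhibit, inside $\bigcap_{\omega\in\ker Jf_G^d(\bp)^{\top}}\ker A_\omega^1$, an explicit $k$-dimensional subspace built from the first coordinate vector of $\bp$ localized to the vertex classes $V_1,\dots,V_k$ of $G$. First I would unpack the left kernel: reading off the rows of $Jf_G^d(\bp)$, an edge weight $\omega$ lies in $\ker Jf_G^d(\bp)^{\top}$ if and only if the stress equations
\[
\sum_{e\in E(G):\,v\in e}\omega(e)\prod_{u\in e\setminus\{v\}}p_{u,\ell}=0
\qquad (v\in V(G),\ \ell\in\{1,\dots,d\})
\]
hold; only the instances with $\ell=1$ will be needed, since $A_\omega^1$ involves first coordinates only.

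Next, for each $i\in\{1,\dots,k\}$ I would take the vector $z^{(i)}\in\mathbb F^N$ defined by $z^{(i)}_v=p_{v,1}$ for $v\in V_i$ and $z^{(i)}_v=0$ otherwise, and show $A_\omega^1 z^{(i)}=0$ for every $\omega$ in the left kernel. For $u\in V_i$ this is immediate, since $A_\omega^1[u,v]=0$ whenever $v\in V_i$ ($G$ is $k$-partite, so no edge meets $V_i$ twice), and the diagonal term $v=u$ never contributes because $A_\omega^1[u,u]=0$ by definition. For $u\notin V_i$,
\[
(A_\omega^1 z^{(i)})_u=\sum_{v\in V_i}\ \sum_{\substack{e\in E(G)\\ \{u,v\}\subseteq e}}\omega(e)\Bigl(\prod_{w\in e\setminus\{u,v\}}p_{w,1}\Bigr)p_{v,1},
\]
and $k$-partiteness forces, for each edge $e\ni u$, exactly one choice $v=v_e\in e\cap V_i$ with $v_e\neq u$; hence the double sum collapses to $\sum_{e\in E(G):\,u\in e}\omega(e)\prod_{w\in e\setminus\{u\}}p_{w,1}$, which vanishes by the $\ell=1$ stress equation at $u$. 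So each $z^{(i)}$ lies in the common kernel. Finally, the $z^{(i)}$ are supported on the pairwise disjoint sets $V_i$ and all of their nonzero entries $p_{v,1}$ are nonzero by genericity of $\bp$, so $z^{(1)},\dots,z^{(k)}$ are linearly independent, which yields the bound (\ref{eq:kernel_lower}).

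I do not expect a genuine obstacle: the whole argument is the single computation above collapsing the double sum via $k$-partiteness, and genericity enters only to ensure the $z^{(i)}$ are nonzero (the statement in fact holds for any framework whose first coordinate vector is nonzero on each class). The one bookkeeping point to handle carefully is the case split $u\in V_i$ versus $u\notin V_i$, which is clean precisely because $A_\omega^1$ has a zero diagonal, so no edge through $u$ is counted twice or missed. As a consistency check, $z^{(1)}+\dots+z^{(k)}$ is the full first coordinate vector $\bp_1$ of $\bp$, and $A_\omega^1\bp_1=0$ is the same computation without localization.
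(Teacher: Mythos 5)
Your proposal is correct and matches the paper's own proof essentially verbatim: the vectors $z^{(i)}$ are exactly the paper's $\by_i^1$, and the key step in both arguments is collapsing the double sum via $k$-partiteness to recover the $\ell=1$ stress (left-kernel) equation at each vertex. Nothing further is needed.
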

\begin{proof}
For $j=1,\dots, k$, define the vector $\by_j^1\in \mathbb{F}^N$ by 
\[
\left(\by_j^1\right)_v=\begin{cases}
p_{v,1} & (v\in V_j) \\
0 & (\text{otherwise}).
\end{cases}
\]
The statement follows by showing that 
$\by_1^1,\dots, \by_k^1$ lie on the kernel of $A_{\omega}^1$
for any $\omega\in \ker Jf_G^d(\bp)^{\top}$.
This will be checked by directly computing 
$A_{\omega}^1 \by_j^1$.

Pick any $i\in\{1,\dots, k\}$ and $u\in V_i$.
For any $j\in \{1,\dots, k\}$, we have
\begin{align*}
(A_{\omega}^1 \by_j^1)_u&=
\sum_{v\in V_j} \left(\sum_{e\in E(G):\{u,v\}\in e} \omega(e)\left( \prod_{w\in e\setminus \{u,v\}} p_{w,1} \right) \right)p_{v,1} 
\\
&=\begin{cases}
0 & (\text{if } i=j) \\
    {\displaystyle \sum_{e\in E(G):u\in e} \omega(e)\left( \prod_{w\in e\setminus \{u\}} p_{w,1} \right)}  & (\text{if } i\neq j)
\end{cases}\\
&=0,
\end{align*}
where the first equation follows from the definitions of $A_{\omega}^1$ and $\by^1_j$,
the second equation follows from the fact that $G$ is $k$-partite,
and the third equation follows from the fact that $\omega$ is in the left kernel of $Jf_G^d(\bp)$.
Thus, $\by_j^1$ is indeed in the kernel of $A_{\omega}$.

Since $\by_j^1,\dots, \by_k^1$ are linearly independent,
the statement follows.
\end{proof}

\begin{lemma}\label{lem:COtest}
Let $(G,\bp)$ be a generic $d$-dimensional framework with $G\subseteq K_{\bn}^k$.
If
\begin{equation}\label{eq:kernel}
\dim \left(\bigcap_{\omega\in \ker Jf_G^d(\bp)^{\top}} \ker A_{\omega}^1\right)=k,
\end{equation}
then $\overline{\pi_G({\rm im} \ \sigma_{\bn})}$ is $d$-tangentially weakly non-defective over $\mathbb{C}$.
\end{lemma}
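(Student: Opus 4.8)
The plan is to adapt the linearization test for tangential weak (non-)defectiveness of Bocci, Chiantini, Ottaviani and Vannieuwenhoven~\cite{BCC,COV2014,chiantini2017generic} to the concrete variety $\mathcal{V}:=\overline{\pi_G(\im \sigma_{\bn})}$, which is the Zariski closure of the image of the $1$-dimensional rigidity map $f_G^1$ and is the affine cone of a projective variety. I would fix a generic $\bp=(p_1,\dots,p_N)\in(\mathbb{F}^d)^N$ and let $\bp_1,\dots,\bp_d\in\mathbb{F}^N$ be its coordinate vectors; since $\bp$ is generic, each $x_i:=f_G^1(\bp_i)$ is a generic, hence smooth, point of $\mathcal{V}$. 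Comparing the two Jacobians column-by-column shows that the columns of $Jf_G^d(\bp)$ indexed by the $i$-th coordinate are exactly the columns of $Jf_G^1(\bp_i)$; hence $H:=\langle T_{x_1}\mathcal{V},\dots,T_{x_d}\mathcal{V}\rangle$ equals the column span of $Jf_G^d(\bp)$ (this is Terracini's lemma made explicit), and its orthogonal complement is $W:=\ker Jf_G^d(\bp)^{\top}$, the set of weights $\omega$ ranged over in \eqref{eq:kernel}. By the definition of $d$-tangential weak defectiveness, it then suffices to prove that the local dimension $\dim_{x_i}C$ of the contact locus $C:=C_{x_1,\dots,x_d}\mathcal{V}$ equals $1$ for every $i$ (it is $\geq 1$ since $\mathcal{V}$ is a cone).

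First I would pull $C$ back along $f_G^1$ near $\bp_1$. As $\bp_1$ is generic it lies in the smooth locus of $f_G^1$, on which $f_G^1$ is a submersion onto $\mathcal{V}$ with fibers of dimension $N-\dim\mathcal{V}$, and on which $f_G^1(\bz)\in C$ is equivalent to $T_{f_G^1(\bz)}\mathcal{V}\subseteq H$, i.e.\ to $\omega^{\top}Jf_G^1(\bz)=0$ for all $\omega\in W$. Spelled out, this means that the pullback $\widetilde{C}:=(f_G^1)^{-1}(C)$ is cut out near $\bp_1$ by the polynomial equations
\[
g_{v,\omega}(\bz):=\sum_{e\in E(G):\,v\in e}\omega(e)\prod_{w\in e\setminus\{v\}}z_w=0\qquad(v\in V(G),\ \omega\in W);
\]
and since $f_G^1$ is a submersion near $\bp_1$, local dimensions add: $\dim_{\bp_1}\widetilde{C}=\dim_{x_1}C+(N-\dim\mathcal{V})$.

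The heart of the argument will be the Zariski tangent space of $\widetilde{C}$ at $\bp_1$. Differentiating $g_{v,\omega}$ at $\bz=\bp_1$ and using $(\bp_1)_w=p_{w,1}$ gives $\tfrac{\partial g_{v,\omega}}{\partial z_u}(\bp_1)=A^1_{\omega}[v,u]$ for all $u,v$ (both sides are $0$ when $u=v$); hence the common kernel of the differentials $\mathrm{d}g_{v,\omega}(\bp_1)$, $v\in V(G)$, $\omega\in W$, is precisely $\bigcap_{\omega\in W}\ker A^1_{\omega}$, which has dimension $k$ by the hypothesis \eqref{eq:kernel}. Since this kernel contains $T_{\bp_1}\widetilde{C}$, we get $\dim_{\bp_1}\widetilde{C}\leq k$, while $\dim\mathcal{V}=\rank Jf_G^1(\bp_1)\leq N-(k-1)$ by the trivial-motion rank bound from Section~\ref{sec:pre}. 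Combining,
\[
\dim_{x_1}C=\dim_{\bp_1}\widetilde{C}-(N-\dim\mathcal{V})\leq k-(N-\dim\mathcal{V})\leq k-(k-1)=1,
\]
so $\dim_{x_1}C=1$. For $x_2,\dots,x_d$: permuting the $d$ coordinates of $\bp$ by a fixed permutation is a stabilizer, hence a congruence, so it fixes $f_G^d(\bp)$ and therefore $W$, while it turns $A^1_{\omega}$ into $A^i_{\omega}$; since the quantity $\dim\bigcap_{\omega\in W}\ker A^1_{\omega}$ takes its minimum value, which is $\geq k$ by Proposition~\ref{prop:lowerbound}, on every generic framework, the hypothesis \eqref{eq:kernel} is equivalent to the analogous statement with $A^i_{\omega}$, and the computation above applies verbatim to each $x_i$. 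Hence $\mathcal{V}=\overline{\pi_G(\im\sigma_{\bn})}$ is $d$-tangentially weakly non-defective over $\mathbb{C}$.

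The hardest part will be the dimension bookkeeping rather than any new idea: one must verify carefully that $f_G^1$ is genuinely a submersion near $\bp_1$ (so the two dimensions add with no semicontinuity loss), that $C$ agrees near $x_1$ with the closed set $\{y\in\mathcal{V}_{\mathrm{sm}}:T_y\mathcal{V}\subseteq H\}$ so that its pullback is really cut out by the $g_{v,\omega}$, and that the hypothesis \eqref{eq:kernel}, stated for a single generic framework, transfers both to all generic frameworks and to every coordinate index $i$. The remaining step, the identity $\tfrac{\partial g_{v,\omega}}{\partial z_u}(\bp_1)=A^1_{\omega}[v,u]$, is a routine computation, and is precisely why $A^1_{\omega}$ was defined as it was.
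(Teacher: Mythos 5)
Your proposal is correct and follows essentially the same route as the paper's proof: pull the contact locus back along $f_G^1$, observe that it is cut out near $\bp_1$ by the equilibrium equations whose Jacobian at $\bp_1$ is exactly $A^1_{\omega}$, use the hypothesis to bound the local dimension of the pullback by $k$, and subtract the fiber dimension $\geq k-1$ coming from the stabilizer action to get local dimension $1$ in the contact locus. The only substantive difference is cosmetic: you bound via the Zariski tangent space rather than via non-singularity of $\bp_1$ in the solution set, and you make explicit the coordinate-permutation symmetry needed to handle $x_2,\dots,x_d$, a point the paper's proof leaves implicit.
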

\begin{proof}
This is an adaptation of the result of Bocci, Chiantini, Ottaviani, and Vannieuwenhoven~\cite{BCC,COV2014,chiantini2017generic}.

For simplicity, denote ${\cal S}_G=\overline{\pi_G({\rm im}\ \sigma_{\bn}})$.
Let $\bp_1,\dots, \bp_d$ be the coordinate vectors of $\bp$
and let $x_i=f^1_G(\bp_i)$ for $i=1,\dots, d$.
Since $\bp$ is generic, each $x_i$ is a generic point of ${\cal S}_G$.
So, the tangent space $T_{x_i}{\cal S}_G$ at $x_i$ is the image of 
$Jf_G^1(\bp_i)$.
Thus, the $d$-tangent contact locus of ${\cal S}_G$ at $x_1,\dots, x_d$ is written as 
\[
C_{x_1,\dots, x_d}{\cal S}_G
=\overline{\{f_G^1(\bq): \bq\in \mathbb{C}^N,  
{\rm im}\ Jf_G^1(\bq)\subset\langle {\rm im}\ Jf_G^1(\bp_i): i=1,\dots, d \rangle\}}
\]
Moreover, since $Jf_G^d(\bp)$ is obtained by aligning 
$Jf_G^1(\bp^i)$ for $i=1,\dots, d$, 
this can be simplified to
\begin{equation}\label{eq:locus}
C_{x_1,\dots, x_d}{\cal S}_G
=\overline{\{f_G^1(\bq): \bq\in \mathbb{C}^N,  
{\rm im}\ Jf_G^1(\bq)\subset {\rm im}\ Jf_G^d(\bp)\}}.
\end{equation}
Suppose (\ref{eq:kernel}) holds.
Then there are $\omega_1,\dots, \omega_t\in \ker Jf_G^d(\bp)^{\top}$
such that 
\begin{equation}\label{eq:kernel2}
\dim \left(\bigcap_{i=1}^t \ker A_{\omega_i}^1\right)=k.
\end{equation}
By (\ref{eq:locus}) and $\omega_j\in \ker Jf_G^d(\bp)^{\top}$,
\begin{equation}\label{eq:locus2}
C_{x_1,\dots, x_d}{\cal S}_G\subseteq 
\overline{\{f_G^1(\bq): \bq\in \mathbb{C}^N, \omega_j\in \ker Jf_G^1(\bq)^{\top}, j=1,\dots, t\}}.
\end{equation}
Now, instead of computing the dimension of the irreducible component of $C_{x_1,\dots, x_d}{\cal S}_G$ at $x_1$,
we compute that in the right set in (\ref{eq:locus2}).
By directly computing the entries of $Jf_G^1$, 
one can verify that 
$\bq=(q_1,\dots, q_N)\in \mathbb{C}^N$ satisfies $\omega_j\in Jf_G^1(\bq)^{\top}$ 
if and only if 
\begin{equation}\label{eq:equiliblium}
\sum_{e\in E(G): u\in e} \omega_j(e)\left(\prod_{v\in e\setminus\{u\}}q_v \right)=0\qquad (u\in V(G)).
\end{equation}
This is a polynomial system in $\bq$, so 
it is written as $h_{\omega_j}(\bq)=0$ for some 
polynomial map $h_{\omega_j}:\mathbb{C}^N\rightarrow \mathbb{C}^N$ (determined by $\omega_j$).
It turns out from (\ref{eq:equiliblium}) that the Jacobian $Jh_{\omega_j}(\bp_1)$ of $h_{\omega_j}$ at $\bp_1$ is exactly $A_{\omega_j}^1$.

Now, by (\ref{eq:kernel2}),
the kernel of the Jacobian of the polynomial map $h:=(h_{\omega_1},\dots, h_{\omega_t})$ is $k$-dimensional.
Since $\bp_1$ is non-singular in the solution space of $h$,
the irreducible component of $\bp_1$ in the vanishing set of $h$ is $k$-dimensional.
Since the kernel of $f_G^1$ is at least $(k-1)$-dimensional
(as the value of $f_G^1$ is invariant by the action of stabilizers), 
this in turn implies that the right set in (\ref{eq:locus2}) is one-dimensional in the irreducible component of $x_i$,
and the same property holds in $C_{x_1,\dots, x_d}{\cal S}_G$ as well. (Note that ${\cal S}_G$ is a cone, so every irreducible component has dimension at least one.)
\end{proof}

Combining the results so far, we have the following.
\begin{theorem}\label{thm:COtest}
Let $\mathbb{F}\in \{\mathbb{C}, \mathbb{R}\}$ and $G$ be a $k$-uniform $k$-graph with $G\subseteq K_{\bn}^k$.
Suppose 
\begin{itemize}
\item[(i)] $G$ is globally rigid in $\mathbb{F}^1$, and
\item[(ii)] there is a generic $d$-dimensional point configuration
$\bp:(\mathbb{F}^d)^N$ 
satisfying (\ref{eq:kernel}).
\end{itemize}
Then $G$ is globally rigid in $\mathbb{F}^d$.
\end{theorem}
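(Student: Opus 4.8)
The plan is simply to chain together the three results already established in this section; no new ideas are required, since the substantive work has been done in Lemma~\ref{lem:COtest}, Proposition~\ref{prop:twnd_iden}, and Proposition~\ref{prop:iden_global}. In outline: hypothesis (ii) $\Rightarrow$ tangential weak non-defectiveness of the projected Segre variety $\Rightarrow$ its $d$-identifiability over $\mathbb{C}$ $\Rightarrow$ (together with hypothesis (i)) global rigidity in $\mathbb{F}^d$.

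Concretely, I would first invoke (ii): let $\bp\in(\mathbb{F}^d)^N$ be a generic $d$-dimensional point configuration for which $(G,\bp)$ satisfies (\ref{eq:kernel}). Such a $\bp$ is also generic over $\mathbb{C}$, so $(G,\bp)$ is a generic framework in the sense required by Lemma~\ref{lem:COtest}; applying that lemma gives that ${\cal S}_G:=\overline{\pi_G({\rm im}\ \sigma_{\bn})}$ is $d$-tangentially weakly non-defective over $\mathbb{C}$, i.e.\ not $d$-tangentially weakly defective. Next I would feed this into Proposition~\ref{prop:twnd_iden}, which says exactly that a projected Segre variety which is not $d$-tangentially weakly defective is $d$-identifiable; hence ${\cal S}_G$ is $d$-identifiable over $\mathbb{C}$. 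Finally, hypothesis (i) supplies global rigidity of $G$ in $\mathbb{F}^1$, so both hypotheses of Proposition~\ref{prop:iden_global} hold, and that proposition yields the desired conclusion that $G$ is globally rigid in $\mathbb{F}^d$.

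There is no genuine obstacle here; the only points to be careful about are bookkeeping. One should note that ``$d$-tangentially weakly non-defective'' is by definition the negation of ``$d$-tangentially weakly defective'', so the output of Lemma~\ref{lem:COtest} matches the hypothesis of Proposition~\ref{prop:twnd_iden}. One should also observe that, although (ii) only asserts the existence of \emph{one} generic configuration satisfying (\ref{eq:kernel}), this is enough: $d$-tangential weak non-defectiveness is an intrinsic property of the variety ${\cal S}_G$, and the target statement ``$G$ is globally rigid in $\mathbb{F}^d$'', which quantifies over all generic configurations, then follows automatically through Proposition~\ref{prop:iden_global}. (In passing, Proposition~\ref{prop:lowerbound} shows that (\ref{eq:kernel}) is the tightest possible bound, so (ii) is precisely the assertion that the inequality of Proposition~\ref{prop:lowerbound} is an equality.)

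Thus the proof reduces to writing ``By (ii) and Lemma~\ref{lem:COtest}, ${\cal S}_G$ is $d$-tangentially weakly non-defective over $\mathbb{C}$; by Proposition~\ref{prop:twnd_iden} it is therefore $d$-identifiable over $\mathbb{C}$; and by (i) and Proposition~\ref{prop:iden_global}, $G$ is globally rigid in $\mathbb{F}^d$.''
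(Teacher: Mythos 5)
Your proposal is correct and follows exactly the same chain as the paper's own proof: condition (ii) feeds into Lemma~\ref{lem:COtest} to give $d$-tangential weak non-defectiveness over $\mathbb{C}$ (the paper likewise notes that (ii) is field-independent), Proposition~\ref{prop:twnd_iden} upgrades this to $d$-identifiability, and Proposition~\ref{prop:iden_global} together with (i) yields global rigidity in $\mathbb{F}^d$. Nothing is missing.
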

 \begin{proof}
 Note that the condition (ii) is irrelevant to whether $\mathbb{F}$ is $\mathbb{R}$ or $\mathbb{C}$.
 Hence, we may assume that the condition (ii) holds over $\mathbb{C}$.
 Then, by Lemma~\ref{lem:COtest} and Proposition~\ref{prop:twnd_iden},
 $\overline{\pi_G({\rm im}\ {\sigma}_d)}$ is $d$-identifiable.
 By the condition (i) and Proposition~\ref{prop:iden_global},
the $d$-dimensional global rigidity of $G$ over $\mathbb{F}$ follows.
 \end{proof}

\paragraph{Masaratti-Mella's sufficient condition.}
Masaratti and Mella~\cite{MMident} have recently shown a much simpler sufficient condition for $d$-identifiability.
Interesting, 
Masaratti-Mella's sufficient condition leads to a purely graph-theoretical condition
for our particular varieties. 

Let $G$ be a $k$-uniform $k$-graph with $G\subseteq K_{\bn}^k$.
For an edge weight function $\omega:E(G)\rightarrow \mathbb{F}$,
the {\em adjacency matrix} $A_{\omega}$ of $G$ is defined to be an $N\times N$ matrix whose
$(i,j)$-th entry is given by 
\[
A_{\omega}[i,j]=\begin{cases}
0 & (\text{if } i=j) \\
{\displaystyle 
\sum_{e\in E(G): \{i,j\}\subseteq e} \omega(e)}
& (\text{otherwise}) 
\end{cases}
\qquad (i,j\in \{1,\dots, N\}).
\]

\begin{theorem}\label{thm:MM_test}
Let $\mathbb{F}\in \{\mathbb{C},\mathbb{R}\}$
and $G$ be a $k$-uniform $k$-graph with $G\subseteq K_{\bn}^k$.
Suppose
\begin{itemize}
\item[(i)] $G$ is globally rigid in $\mathbb{F}^1$,
\item[(ii)] $G$ is locally rigid in $\mathbb{C}^{d+1}$, and 
\item[(iii)] $G$ satisfies
\begin{equation}\label{eq:simplified_kernel}
\dim \left(\bigcap_{\omega\in \ker I_G} \ker A_{\omega}\right)=k.
\end{equation}
\end{itemize}
Then $G$ is globally rigid in $\mathbb{F}^d$.
\end{theorem}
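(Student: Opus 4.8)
The plan is to establish $d$-identifiability of the affine cone $\mathcal{S}_G:=\overline{\pi_G({\rm im}\ \sigma_{\bn})}$ over $\mathbb{C}$ and then to invoke Proposition~\ref{prop:iden_global}, which, together with hypothesis~(i), yields the global rigidity of $G$ in $\mathbb{F}^d$. Unlike Theorem~\ref{thm:COtest}, which routed through the $d$-tangent contact locus at $d$ general points of $\mathcal{S}_G$, here I would appeal to Masaratti--Mella's criterion~\cite{MMident}, which (in the form we need) guarantees $d$-identifiability of an irreducible, non-degenerate affine cone $\mathcal{V}$ as soon as $S_{d+1}(\mathcal{V})$ has the expected dimension and $\mathcal{V}$ is not $1$-tangentially weakly defective. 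The proof then amounts to reading off the first condition from~(ii) and the second from~(iii). As a preliminary remark, $\mathcal{S}_G$ is irreducible (being the closure of the image of $\mathbb{C}^N$ under the morphism $\pi_G\circ\sigma_{\bn}$) and non-degenerate in $\mathbb{C}^{|E(G)|}$ (since $\overline{{\rm im}\ \sigma_{\bn}}$ spans $\mathbb{C}^M$ and $\pi_G$ is a surjective linear map), so the structural hypotheses of the criterion are met.

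For the secant condition, recall $S_{d+1}(\mathcal{S}_G)=\overline{{\rm im}\ f_G^{d+1}}$ (coordinate projection commutes with forming secants), whence $\dim S_{d+1}(\mathcal{S}_G)=\rank Jf_G^{d+1}(\bp')$ at a generic $\bp'\in(\mathbb{C}^{d+1})^N$; by Proposition~\ref{prop:infinitesimal}, hypothesis~(ii) is exactly the statement that this rank attains its maximal value $(d+1)N-(d+1)(k-1)$. Moreover $\dim\mathcal{S}_G=N-(k-1)$: by Proposition~\ref{prop:infinitesimal} applied in dimension $1$ (equivalently, by Lemma~\ref{lem:1d_local}), this is equivalent to $G$ being locally rigid in $\mathbb{F}^1$, which is implied by~(i). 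In the regime of interest $|E(G)|$ exceeds $(d+1)(N-(k-1))$, so we are in the subgeneric range, where the expected dimension of $S_{d+1}(\mathcal{S}_G)$ is exactly $(d+1)\dim\mathcal{S}_G=(d+1)N-(d+1)(k-1)$; hence~(ii) is equivalent to $S_{d+1}(\mathcal{S}_G)$ being non-defective.

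For the tangential weak non-defectivity, I would show that~(iii) is nothing but the $d=1$ instance of the kernel condition~\eqref{eq:kernel}, and that~(iii) is field-independent (the entries of $I_G$ lie in $\{0,1\}$, so $\ker I_G$ has a rational basis $\omega_1,\dots,\omega_t$, and $\bigcap_{\omega\in\ker I_G}\ker A_\omega=\bigcap_{i=1}^t\ker A_{\omega_i}$ is the kernel of a rational matrix, of the same dimension over $\mathbb{R}$ and over $\mathbb{C}$). Fix a generic $1$-dimensional configuration $\bp=(p_v)_{v\in V}\in\mathbb{C}^N$, put $D:=\diag(p_v)_{v\in V}$, and for $\omega:E(G)\to\mathbb{C}$ set $\tilde\omega(e):=\omega(e)\prod_{v\in e}p_v$. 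Pulling the factor $p_ip_j$ out of $\prod_{v\in e\setminus\{i,j\}}p_v$ gives $A_\omega^1=D^{-1}A_{\tilde\omega}D^{-1}$, hence $\ker A_\omega^1=D\ker A_{\tilde\omega}$; the same manipulation shows that $\omega\in\ker Jf_G^1(\bp)^{\top}$ if and only if $\tilde\omega\in\ker I_G$, the correspondence $\omega\mapsto\tilde\omega$ being a linear isomorphism. Consequently $\bigcap_{\omega\in\ker Jf_G^1(\bp)^{\top}}\ker A_\omega^1=D\big(\bigcap_{\tilde\omega\in\ker I_G}\ker A_{\tilde\omega}\big)$ has the same dimension as the intersection in~(iii); in particular the automatic lower bound $k$ for that dimension is Proposition~\ref{prop:lowerbound} specialized to $d=1$, so~(iii) asserts precisely that it equals $k$. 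Lemma~\ref{lem:COtest} with $d$ replaced by $1$, applied to the generic framework $(G,\bp)$, then yields that $\mathcal{S}_G$ is $1$-tangentially weakly non-defective over $\mathbb{C}$.

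With both conditions verified, Masaratti--Mella's criterion gives $d$-identifiability of $\mathcal{S}_G$ over $\mathbb{C}$, and Proposition~\ref{prop:iden_global} together with~(i) completes the argument. The step I expect to demand the most care is the precise invocation of Masaratti--Mella's theorem: their statement is cast for projective varieties and, in its general form, carries a subgenericity hypothesis, a non-degeneracy hypothesis, and a finite list of exceptional varieties, so one must transcribe it into the affine-cone language used here and confirm that $\mathcal{S}_G$ is not exceptional — which should be routine, since $\overline{{\rm im}\ \sigma_{\bn}}$ is a Segre variety and its coordinate projections are well understood. A secondary point is the dimension bookkeeping above, namely that local rigidity in $\mathbb{C}^{d+1}$ is literally the non-defectivity of $S_{d+1}(\mathcal{S}_G)$; this rests on $\dim\mathcal{S}_G=N-(k-1)$, which follows from~(i) as indicated (or, alternatively, from~(ii) itself via the crude bound $\dim S_{d+1}(\mathcal{S}_G)\leq(d+1)\dim\mathcal{S}_G$).
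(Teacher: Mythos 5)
Your proposal is correct and follows essentially the same route as the paper: reduce to $d$-identifiability via Proposition~\ref{prop:iden_global} and~(i), invoke Masaratti--Mella with the $(d+1)$-secant dimension condition read off from~(ii) via Proposition~\ref{prop:infinitesimal}, and verify $1$-tangential weak non-defectiveness from~(iii) by the diagonal conjugation $A_\omega^1=D^{-1}A_{\tilde\omega}D^{-1}$ together with the linear bijection $\omega\mapsto\tilde\omega$ between $\ker Jf_G^1(\bp)^{\top}$ and $\ker I_G$, then apply Lemma~\ref{lem:COtest} with $d=1$. The extra remarks on irreducibility, non-degeneracy, and field-independence of~(iii) are sound but do not change the argument.
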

\begin{proof}
By Proposition~\ref{prop:iden_global} together with (i),
it suffices to show that 
$\overline{\pi_G({\rm im} \ \sigma_{\bn})}$ is $d$-identifiable.
We apply the Masaratti and Mella theorem~\cite[Theorem 1.5]{MMident} to $\overline{\pi_G({\rm im} \ \sigma_{\bn})}$.
The theorem implies that 
$\overline{\pi_G({\rm im} \ \sigma_{\bn})}$ is $d$-identifiable 
if 
\begin{description}
    \item[(MM1)] the $(d+1)$-secant of $\overline{\pi_G({\rm im} \ \sigma_{\bn})}$ has dimension $(d+1)\left(N-(k-1)\right)$, and
    \item[(MM2)] $\overline{\pi_G({\rm im}\ \sigma_{\bn})}$ is $1$-tangentially weakly non-defective.
\end{description}
By Proposition~\ref{prop:infinitesimal}, the condition (MM1) is equivalent to the generic $(d+1)$-dimensional local rigidity of $G$, which is the condition (ii) in the statement.

We show that the condition (MM2) is implied by the condition (iii) in the statement.
By Lemma~\ref{lem:COtest}, the condition (MM2) holds if 
\begin{equation}\label{eq:kernel_1}
\dim \left(\bigcap_{\omega\in \ker Jf_G^1(\bp)^{\top}} \ker A_{\omega}^1\right)=k
\end{equation}
for a generic $1$-dimensional framework $(G,\bp)$ of $G$.
(Note that here  $\bp=(p_1,\dots, p_N)$ is a $1$-dimensional point configuration. So each $p_i$ is a scalar.)
We have seen in the proof of Lemma~\ref{lem:1d_local}
that $Jf_G^1(\bp)^{\top}$ can be transformed to $I_G$ by fundamental row and column operations.
This gives a linear bijection  between 
$\ker Jf_G^1(\bp)^{\top}$ and $\ker I_G$.
More explicitly, this bijection sends $\omega\in \ker Jf_G^1(\bp)^{\top}$ to $\omega'\in I_G$ written by
\begin{equation}\label{eq:omega_bijection}
\omega'(e)=\omega(e)\left(\prod_{v\in e} p_v\right)\qquad (e\in E(G)).
\end{equation}
Using the definitions of $A_{\omega}^1$ and $A_{\omega}$, 
one can directly check that 
\[
A_{\omega'}=\begin{pmatrix}
p_1 & 0 &  \cdots & 0 \\
0 & p_2 &  \cdots & 0 \\
\vdots & \vdots & \ddots & \vdots \\
0 & 0 &  \cdots & p_N
\end{pmatrix} A_{\omega}^1
\begin{pmatrix}
p_1 & 0 &  \cdots & 0 \\
0 & p_2 &  \cdots & 0 \\
\vdots & \vdots & \ddots & \vdots \\
0 & 0 &  \cdots & p_N
\end{pmatrix}
\]
for any $\omega\in \ker Jf_G^1(\bp)^{\top}$ and the corresponding $\omega'\in I_G$ in (\ref{eq:omega_bijection}).
Thus, (\ref{eq:kernel_1}) is equivalent to (\ref{eq:simplified_kernel}),
and (MM2) follows.
\end{proof}

\begin{eg}\label{ex:A2}
Consider the same setting as Example~\ref{eg:1d_local}.
An edge weight $\omega=(\omega_e)_{e\in E(G))}$  of $G$ defined by 
\begin{equation*}
        \kbordermatrix{
        & (1,3,5) & (1,3,6) & (1,4,6) & (2,3,6) & (2,4,5) \\
        & -1 & 2 & -1 & -1 & 1
        },
    \end{equation*}
    is in the (right) kernel of $I_G$ given in (\ref{eq:IGexample}). Then,
\begin{equation*}
A_{\omega}=
\kbordermatrix{
& 1 & 2 & 3 & 4 & 5 & 6 \\
1 & 0 & 0 & 1 & -1 & -1 & 1 \\
2 & 0 & 0 & -1 & 1 & 1 & -1 \\
3 & 1 & -1 & 0 & 0 & -1 & 1 \\
4 & -1 & 1 & 0 & 0 & 1 & -1 \\
5 & -1 & 1 & -1 & 1 & 0 & 0 \\
6 & 1 & -1 & 1 & -1 & 0 & 0
},
\end{equation*}
whose rank is $3=N-k$.
Hence, $G$ satisfies the condition (iii) of Theorem~\ref{thm:MM_test}.
\end{eg}

\begin{eg}\label{ex:K}
Let us show that $K_{(2,2,1,\dots, 1)}^k$ satisfies the condition (iii) of Theorem~\ref{thm:MM_test}.
Denote the vertices by $V_1=\{v_{1,1},v_{1,2}\}, V_2=\{v_{2,1},v_{2,2}\}, V_i=\{v_i\}$ for $i=3,\dots, k$
and the edges by
$e_{1,1}=(v_{1,1},v_{2,1},v_3\dots, v_k)$,
$e_{1,2}=(v_{1,1},v_{2,2},v_3\dots, v_k)$,
$e_{2,1}=(v_{1,2},v_{2,1},v_3\dots, v_k)$,
$e_{1,1}=(v_{1,2},v_{2,2},v_3\dots, v_k)$.
Set an edge weight $\omega$ by
$\omega(e_{1,1})=1, \omega(e_{1,2})=-1, \omega(e_{2,1})=-1, \omega(e_{2,2})=1$.
Then $\omega$ is in the kernel of $I_{K_{(2,2,1,\dots, 1)}^k}$
and 
\begin{equation*}
A_{\omega}=
\kbordermatrix{
& v_{1,1} & v_{1,2} & v_{2,1} & v_{2,2} & v_3 & \dots & v_{k} \\
v_{1,1} & 0 & 0 & 1 & -1 & 0 & \dots & 0 \\
v_{1,2} & 0 & 0 & -1 & 1 & \vdots &  & \vdots \\
v_{2,1} & 1 & -1 & 0 & 0 & \vdots &  & \vdots \\
v_{2,2} & -1 & 1 & 0 & 0 & \vdots &  &  \vdots  \\
v_3 & 0 & \dots & \dots & \dots & 0 &  & \vdots \\
\vdots & \vdots &  &  &  &  & \ddots & \vdots \\
v_k & 0 & \dots & \dots & \dots  & \dots &  & 0
}.
\end{equation*}
We have $\rank A_{\omega}=2=N-k$, implying that 
$K_{(2,2,1,\dots, 1)}^k$ satisfies the condition (iii) of Theorem~\ref{thm:MM_test}.
\end{eg}
\section{Checking One-dimensional Generic Global Rigidity}\label{sec:1d}
In view of Theorem~\ref{thm:COtest} or Theorem~\ref{thm:MM_test}, 
solving the 1-dimensional global rigidity problem is a crucial step for understanding general dimensional global rigidity.
In this section, we shall focus on the 1-dimensional case.

We first solve the case when $\mathbb{F}=\mathbb{R}$ since this case is simpler but still captures the main idea.

\begin{theorem}\label{thm:1d_global_real}
Let $G$ be a $k$-partite $k$-graph with $N$ vertices.
Then $G$ is globally rigid in $\mathbb{R}^1$ if and only if
$\rank_{\mathbb{R}} I_G=\rank_{GF(2)} I_G=N-(k-1)$.
\end{theorem}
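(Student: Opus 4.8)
The plan is to prove the two directions separately, using the fact (established in Lemma~\ref{lem:1d_local}) that local rigidity in $\mathbb{F}^1$ is equivalent to $\rank_{\mathbb{F}} I_G = N-(k-1)$, together with Theorem~\ref{thm:COtest} or Theorem~\ref{thm:MM_test} for the ``if'' direction and a direct exhibition of a non-congruent fiber-mate for the ``only if'' direction. Recall that in dimension one the rigidity map is, after the diagonal scaling in the proof of Lemma~\ref{lem:1d_local}, essentially the incidence map; so a second point configuration $\bq$ with $f_G^1(\bp)=f_G^1(\bq)$ corresponds (after taking logarithms, informally) to a vector in $\ker I_G^\top$, while the sign pattern of $\bq$ relative to $\bp$ is governed by $\ker_{GF(2)} I_G$. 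This is the intuition that makes the two rank conditions — over $\mathbb{R}$ and over $GF(2)$ — the natural invariants.

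For the ``only if'' direction I would argue contrapositively. Suppose $\rank_{\mathbb{R}} I_G < N-(k-1)$; then $G$ is not even locally rigid in $\mathbb{R}^1$ by Lemma~\ref{lem:1d_local}, hence not globally rigid, so assume $\rank_{\mathbb{R}} I_G = N-(k-1)$ but $\rank_{GF(2)} I_G < N-(k-1)$. Since the $GF(2)$-rank dropped, there is a nonzero vector $s \in GF(2)^{V}$ with $I_G^\top s \equiv 0 \pmod 2$ that is not in the $GF(2)$-reduction of the trivial kernel (the trivial kernel has dimension $k-1$ over every field, as noted around~\eqref{eq:IG_lower}); concretely, $s$ assigns $\pm 1$ signs to vertices so that every edge of $G$ contains an even number of ``$-1$'' vertices, and $s$ is not constant on the parts $V_1,\dots,V_k$ in the pattern of a trivial kernel vector. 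Then, given a generic $\bp=(p_1,\dots,p_N)$, define $\bq$ by $q_v = (-1)^{s(v)} p_v$. Because each edge $e$ has an even number of sign flips, $\prod_{v\in e} q_v = \prod_{v\in e} p_v$, so $f_G^1(\bq)=f_G^1(\bp)$; and because $s$ is not a trivial-kernel sign pattern, $\bq$ is not congruent to $\bp$ (a stabilizer in dimension one can only rescale each part by constants with product one, which cannot produce this sign pattern on a generic configuration). Hence $(G,\bp)$ is not globally rigid. The one subtlety here is checking carefully that the $GF(2)$-kernel being strictly larger than the reduction of the trivial kernel genuinely yields a sign vector not realizable by a stabilizer; this follows because the trivial kernel's reduction mod $2$ has dimension exactly $k-1$ when $\rank_{GF(2)} I_G = N - (k-1)$ fails, so any extra $GF(2)$-kernel vector escapes it.

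For the ``if'' direction, assume $\rank_{\mathbb{R}} I_G = \rank_{GF(2)} I_G = N-(k-1)$. By Lemma~\ref{lem:1d_local}, $G$ is locally rigid in $\mathbb{R}^1$, so the fiber of $f_G^1$ at $f_G^1(\bp)$ is finite for generic $\bp$. It remains to show the fiber contains only configurations congruent to $\bp$. Take $\bq$ in the fiber. Working coordinatewise, $\prod_{v\in e} q_v = \prod_{v\in e} p_v$ for all $e\in E(G)$. I would first argue that no $q_v$ can be zero (genericity of $\bp$ forces all edge-products nonzero, hence all $q_v\neq 0$), so the ratios $r_v := q_v/p_v$ are well-defined nonzero reals satisfying $\prod_{v\in e} r_v = 1$ for every edge, i.e.\ $(\log|r_v|)_v \in \ker_{\mathbb{R}} I_G^\top$ and $(\operatorname{sign} r_v)_v$, read mod $2$, lies in $\ker_{GF(2)} I_G^\top$. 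By the $\mathbb{R}$-rank hypothesis, $\ker_{\mathbb{R}} I_G^\top$ equals the trivial kernel; by the $GF(2)$-rank hypothesis, $\ker_{GF(2)} I_G^\top$ is exactly the mod-$2$ reduction of the trivial kernel. One then checks that the only real vectors $r$ with $\log|r| $ in the trivial kernel \emph{and} $\operatorname{sign}(r)$ a trivial-kernel pattern are precisely those realizing a stabilizer action: $r_v$ depends only on which part $V_j$ contains $v$, via constants $\lambda_j \in \mathbb{R}\setminus\{0\}$ with $\prod_j \lambda_j = 1$. Hence $\bq$ is congruent to $\bp$, and $(G,\bp)$ is globally rigid.

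I expect the main obstacle to be the bookkeeping in the ``if'' direction: correctly translating ``$(\log|r_v|)$ in the trivial kernel and $(\operatorname{sign} r_v)$ in the mod-$2$ trivial kernel'' into ``$r$ comes from a stabilizer,'' and in particular verifying that the trivial kernel's description~\eqref{eq:1d_trivial} is compatible across the real and $GF(2)$ pictures (the generators $\bx_2,\dots,\bx_k$ reduce mod $2$ to a $(k-1)$-dimensional space, which requires $\rank_{GF(2)} I_G = N-(k-1)$ to be \emph{all} of $\ker_{GF(2)} I_G^\top$). A secondary, more mechanical point is justifying $q_v\neq 0$ and the passage to logarithms, which is where genericity of $\bp$ is really used; this should be routine but must be stated. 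Everything else — the contrapositive construction of $\bq$ by sign flips, and the invocation of Lemma~\ref{lem:1d_local} — is short.
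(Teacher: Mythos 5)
Your proof is correct and follows essentially the same route as the paper's: both decompose a fiber-mate $\bq$ relative to $\bp$ into magnitudes and signs, translate $f_G^1(\bp)=f_G^1(\bq)$ into $I_G^{\top}\bx=0$ over $\mathbb{R}$ together with $I_G^{\top}\by\equiv 0 \pmod 2$, and use the two rank hypotheses to force both solutions into the trivial kernel, which is exactly congruence. (The passing suggestion to invoke Theorem~\ref{thm:COtest} or Theorem~\ref{thm:MM_test} for the ``if'' direction would be circular, since those theorems take $1$-dimensional global rigidity as a hypothesis, but your actual argument never uses them.)
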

\begin{proof}
Let us consider a 1-dimensional framework $(G,\bp)$
with a generic 1-dimensional point configuration $\bp=(p_1,\dots, p_N)\in \mathbb{R}^N$.
Our goal is to show that, for any $\bq=(q_1,\dots, q_N)\in \mathbb{F}^N$, $\bq$ is congruent to $\bp$ 
whenever $f_G^1(\bp)=f_G^1(\bq)$.
Observe that, by specializing the definition of stabilizers to the 1-dimensional case, $\bq$ is congruent to $\bp$ if 
there are $a_1,\dots, a_k\in \mathbb{R}$ satisfying $\prod_{i=1}^k a_i=1$ such that 
$q_v=a_i p_v\ (v\in V_i, i=1,\dots, k)$.

Now, the condition $f_G^1(\bp)=f_G^1(\bq)$ is written as
\begin{equation}\label{eq:1d_real0}
\prod_{v\in e} p_{v} = \prod_{v\in e} q_{v}
\qquad (e\in E(G)).
\end{equation}
We can write
$p_v$ and $q_v$ as $p_v=s_v^p r_v^p$
and $q_v=s_v^q r_v^q$ using $s_v^p, s_v^q\in \{-1,1\}$ and $r_v^p, r_v^q> 0$
for each $v\in V(G)$.
Then (\ref{eq:1d_real0}) is equivalent to
\begin{equation}\label{eq:1d_real1}
\sum_{v\in e}^k(\log r_{v}^p-\log r_{v}^q)=0
\quad \text{ and } \quad
\prod_{v\in e} s_{v}^p = \prod_{v\in e} s_{v}^q
\qquad (e\in E(G))
\end{equation}
or equivalently
\begin{equation}\label{eq:1d_real2}
I_G^{\top} \bx = 0 \quad \text{ and } \quad I_G^{\top} \by \equiv 0 \mod 2
\end{equation}
by setting
$x_v = \log r_v^p-\log r_v^q$, 
and 
$y_v = 0$ if $s_v^p=s_v^q$ and otherwise $y_v = 1$.
Hence, 
$\rank_{\mathbb{R}}I_G=\rank_{GF(2)} I_G=n-(k-1)$ holds if and only if
any solutions $\bx, \by$ of (\ref{eq:1d_real2}) are in the trivial kernel of $I_G$, that is, $\bx, \by$ are written as 
\[
x_v = \alpha_i \text{ and }
y_v = \beta_i \text{ for each $v\in V_i$  }
\]
for some $\alpha_1,\dots, \alpha_k\in \mathbb{R}, \beta_1,\dots, \beta_k\in \{0,1\}$ with $\sum_{i=1}^k \alpha_i=0$ and $\sum_{i=1}^k \beta_i\equiv 0 \mod 2$
(see Section~\ref{subsec:1d_local} for the definition of the trivial kernel).
This condition is equivalent that $\bq$ is written as $q_v=a_i p_v\ (v\in V_i, i=1,\dots, k)$
for some $a_1,\dots, a_k\in \mathbb{R}$ satisfying $\prod_{i=1}^k a_i=1$
by setting $a_i=(-1)^{\beta_i} \exp(\alpha_i)$ for each $i$.
\end{proof}

We now extend this argument to the complex case.
The argument seems to be extendable for a complete characterization even in the complex case, but here we shall focus on giving a simpler sufficient condition for subsequent applications.

\begin{theorem}\label{thm:1d_global_complex}
Let $G$ be a $k$-partite $k$-graph with $N$ vertices.
Then $G$ is globally rigid in $\mathbb{C}^1$ if 
$\rank_{\mathbb{R}} I_G=\rank_{GF(q)} I_G=N-(k-1)$
for any prime $q$ with $q\leq k^{N/2}$.
\end{theorem}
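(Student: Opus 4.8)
The plan is to mimic the structure of the proof of Theorem~\ref{thm:1d_global_real}, where the real case splits the coordinates into a multiplicative "sign part" and an "absolute-value part" and then translates the equations $\prod_{v\in e}p_v=\prod_{v\in e}q_v$ into a linear system over $\mathbb{R}$ and over $GF(2)$ via the incidence matrix. Over $\mathbb{C}$, the analogous decomposition is $z=r e^{\mathrm{i}\theta}$ with $r>0$ and $\theta\in \mathbb{R}/2\pi\mathbb{Z}$; the modulus part again gives the real equation $I_G^{\top}\bx=0$ with $x_v=\log r_v^p-\log r_v^q$, and since $\bp$ is generic these moduli are algebraically independent, so $\rank_{\mathbb{R}}I_G=N-(k-1)$ already forces $\bx$ to be trivial, i.e.\ $r_v^q=a_i r_v^p$ on each class $V_i$ with $\prod_i a_i=1$. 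The genuinely new ingredient is the argument part: $f_G^1(\bp)=f_G^1(\bq)$ forces $\sum_{v\in e}(\theta_v^p-\theta_v^q)\equiv 0 \pmod{2\pi}$ for every $e\in E(G)$, i.e.\ $I_G^{\top}\bu = 0$ in $\mathbb{R}/2\pi\mathbb{Z}$ where $u_v=\theta_v^p-\theta_v^q$. So I must show that, under the hypothesis, every solution of $I_G^{\top}\bu\equiv 0$ over the circle group lies in the image of the trivial kernel, i.e.\ $\bu$ is constant on each class with the class-constants summing to $0$ mod $2\pi$.

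To do this I would work with the Smith normal form of $I_G$ over $\mathbb{Z}$ (the columns of $I_G$ are $0/1$ vectors, so $I_G\in\mathbb{Z}^{N\times|E|}$). Write $I_G = S D U$ with $S\in GL_N(\mathbb{Z})$, $U\in GL_{|E|}(\mathbb{Z})$, and $D$ the diagonal matrix of invariant factors $d_1\mid d_2\mid\dots$. The number of nonzero invariant factors equals $\rank_{\mathbb{Q}}I_G=\rank_{\mathbb{R}}I_G=N-(k-1)$ by hypothesis, so $\ker_{\mathbb{Z}} I_G^{\top}$ is free of rank $k-1$; by the discussion in Section~\ref{subsec:1d_local} the trivial kernel is a sublattice of $\ker_{\mathbb{Z}}I_G^{\top}$ of the same rank $k-1$, and I would like it to be saturated, so that solving $I_G^{\top}\bu\equiv 0$ over the divisible group $\mathbb{R}/2\pi\mathbb{Z}$ reduces to controlling the torsion coming from the invariant factors $d_i$. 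Concretely, a solution $\bu$ over $\mathbb{R}/2\pi\mathbb{Z}$ decomposes, after changing basis by $S^{\top}$, into a free part (forced into the trivial kernel exactly when the trivial kernel is saturated in $\ker_{\mathbb{Z}}I_G^{\top}$, which is a separate combinatorial check one can carry out using the explicit generators \eqref{eq:1d_trivial}) plus a "torsion" part whose existence is governed by the prime divisors of the invariant factors $d_i$. The key point is that a prime $q$ divides some invariant factor $d_i$ if and only if $\rank_{GF(q)}I_G < \rank_{\mathbb{Q}}I_G = N-(k-1)$; so the hypothesis $\rank_{GF(q)}I_G = N-(k-1)$ for all primes $q$ below the stated bound kills all such torsion, provided every invariant factor has all its prime divisors below that bound.

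Hence the crux — and the step I expect to be the main obstacle — is the a priori bound on the invariant factors: I must show that every prime $q$ dividing any $d_i$ satisfies $q\le k^{N/2}$. This is where the $0/1$ structure of $I_G$ and $k$-partiteness must be used: the product $d_1\cdots d_{N-(k-1)}$ equals the gcd of the $(N-(k-1))\times(N-(k-1))$ minors of $I_G$, and each such minor is the determinant of a $0/1$ matrix with exactly $k$ ones per column, which by Hadamard's inequality (applied to the columns, each of norm $\sqrt{k}$) is at most $k^{(N-(k-1))/2}\le k^{N/2}$ in absolute value. So any prime dividing the product of the invariant factors — in particular any prime dividing any single $d_i$ — is at most $k^{N/2}$, which is exactly the hypothesis's range. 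Combining: genericity handles the moduli, Hadamard bounds the invariant factors, the rank hypotheses over $GF(q)$ for all $q\le k^{N/2}$ forces the argument-part solution to be $GF(q)$-trivial for every relevant $q$ hence $\mathbb{Z}$-trivial hence (by saturation of the trivial kernel, checked directly) in the trivial kernel over the circle; translating back gives $q_v = c_i p_v$ on each $V_i$ with $\prod_i c_i = 1$, i.e.\ $\bq$ is congruent to $\bp$, which is global rigidity in $\mathbb{C}^1$.
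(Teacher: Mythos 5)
Your proposal is correct and follows essentially the same route as the paper's proof: split each coordinate into modulus and argument, dispose of the moduli with the real rank hypothesis, reduce the argument equations to an integrality question over $\mathbb{Z}$, bound the relevant determinants by $k^{N/2}$ via Hadamard's inequality, and invoke the $GF(q)$-rank hypothesis for every prime $q$ up to that bound. The only difference is organizational: you package the torsion analysis via the Smith normal form of $I_G$, whereas the paper augments $I_G$ to a full-rank matrix, applies Cramer's rule to obtain an integer vector, and finishes with the Chinese remainder theorem; and the saturation check you defer is indeed immediate from the explicit generators in (\ref{eq:1d_trivial}), since a combination $\sum_{i\geq 2} c_i\bx_i$ has entry $-c_i$ on $V_i$, so integrality forces $c_i\in\mathbb{Z}$.
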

\begin{proof}
As in the proof of Theorem~\ref{thm:1d_global_real}, let us consider a 1-dimensional framework $(G,\bp)$
with a generic 1-dimensional point configuration $\bp=(p_1,\dots, p_N)\in \mathbb{C}^N$,
and pick any $\bq=(q_1,\dots,q_N)$ satisfying  $f_G^1(\bp)=f_G^1(\bq)$.
We denote $p_v=r_v^p \exp(\mathrm{i}\theta_v^p)$ and 
$q_v=r_v^q \exp(\mathrm{i}\theta_v^q)$ by preparing real numbers $r_v^p, r_v^q> 0$
and $\theta_v^p, \theta_v^q\in [0,2\pi)$ for each $v\in V(G)$.
Then, the condition $f_G^1(\bp)=f_G^1(\bq)$ is equivalent to
\begin{align}\label{eq:1d_complex1}
\sum_{v\in e}(\log r_{v}^p-\log r_{v}^q)&=0 \qquad (e\in E(G)), \\
\sum_{v\in e} (\theta_v^p-\theta_v^q)&\in 2\pi\mathbb{Z}
\qquad (e\in E(G)). \label{eq:1d_complex2}
\end{align}
As we have seen in the proof of Theorem~\ref{thm:1d_global_real},
(\ref{eq:1d_complex1}) determines $r_v^q$ up to the difference of a trivial kernel vector if and only if $\rank_{\mathbb{R}} I_G=N-(k-1)$. 
So we can reuse this observation to see that 
that $\log r_v^p-\log r_v^q=\alpha_i\ (v\in V_i)$
for some $\alpha_1,\dots, \alpha_k>0$.
The new observation of this proof is the analysis of (\ref{eq:1d_complex2}), which is given in the subsequent discussion.

Let $V_1,\dots, V_k$ be the $k$-partition of $V(G)$,
and pick any representative vertex $v_i$ for each $V_i$.
We define the augmented incidence matrix $\tilde{I}_G$ to be that obtained from $I_G$ by appending the characteristic vectors $\chi_{v_i}$ of $v_i$ for all $i=2,\dots, k$ as new columns.
Since no trivial kernel vector is orthogonal to those characteristic vectors, 
$\rank_{\mathbb{R}} \tilde{I}_G=\rank_{GF(q)} \tilde{I}_G=N$ follows 
from $\rank_{\mathbb{R}} I_G=\rank_{GF(q)} I_G=N-(k-1)$.
Thus, $\tilde{I}_G$ has a non-singular square submatrix  $I_0$ of size $N$  over $\mathbb{R}$.

Now, we prepare a vector $\bz$ so that 
\begin{equation}\label{eq:1d_complex7}
z_v=\frac{\det(I_0)(\theta_v^p-\theta_v^q)}{2\pi} \qquad (v\in V(G)).
\end{equation}
Then (\ref{eq:1d_complex2}) is equivalent to
\begin{equation}\label{eq:1d_complex3}
I_G^{\top}\bz \in  (|\det(I_0)|\mathbb{Z})^m.
\end{equation}
By Cramer's rule, $\bz$ is an integer vector, and hence
(\ref{eq:1d_complex3}) gives
\begin{equation}\label{eq:1d_complex4}
I_G^{\top}\bz \equiv 0 \mod |\det(I_0)|.
\end{equation}
Consider the vectors $\bx_2,\dots, \bx_k$ defined in (\ref{eq:1d_trivial}), which form a basis of the trivial kernel of $I_G$.
By setting 
\begin{equation}\label{eq:1d_complex5}
\tilde{\bz}=\bz+\sum_{i=2}^k z_{v_i}\bx_i,
\end{equation}
we have 
\begin{equation}\label{eq:1d_complex6}
\tilde{I}_G^{\top}\tilde{\bz} \equiv 0 \mod |\det(I_0)|.
\end{equation}
from (\ref{eq:1d_complex4}).
This in turn implies that $\tilde{I}_G^{\top}\tilde{\bz} \equiv 0 \mod q$
for any prime divisor of $|\det(I_0)|$.
Hadamard's inequality on matrix determinants gives
$|\det(I_0)|\leq k^{N/2}$.
Therefore, the assumption in the statement gives $\rank_{GF(q)} \tilde{I}_G=n$
for any prime divisor $q$ of $|\det(I_0)|$.
We thus obtain $\tilde{\bz}\equiv 0 \mod q$ for any prime divisor $q$ of $|\det(I_0)|$. 
By the Chinese remainder theorem, this gives  
$\tilde{\bz}\equiv 0 \mod |\det(I_0)|$.
Combining this with (\ref{eq:1d_complex7}) and (\ref{eq:1d_complex5}), we 
conclude that there are some scalars $\beta_1,\dots, \beta_k$ such that
\[
\theta_v^p-\theta_v^q+\beta_i\in 2\pi\mathbb{Z} \quad (v\in V_i).
\]

Recall that $\log r_v^p-\log r_v^q=\alpha_i\ (v\in V_i)$
for some $\alpha_1,\dots, \alpha_k>0$.
Hence,
$\bq$ is written as $q_v=a_i p_v\ (v\in V_i, i=1,\dots, k)$
for some $a_1,\dots, a_k\in \mathbb{C}$ satisfying $\prod_{i=1}^k a_i=1$
by setting $a_i=\exp(\alpha_i)\exp(i\beta_i)$ for each $i$,
and we conclude that $\bq$ is congruent to $\bp$.
Thus, $(G,\bp)$ is globally rigid.
\end{proof}

\section{Polymatroids on Random Subgraphs}\label{sec:random}
Throughout this section, we focus on the case when 
$\bn=(n,\dots, n)$ and  denote $K_{\bn}^k$  by $K_{n}^k$.

 We first introduce a graph operation.
 Given a $k$-uniform $k$-graph $G$, a {\em degree-$d$ extension}  creates a new $k$-graph from $G$ by adding a new vertex $v$ and $d$ distinct new hyperedges,
 each of which consists of $v$ and $k-1$ vertices from $G$.
 A $k$-partite graph is said to be a {\em $k$-partite $d$-tree}\footnote{The term is motivated from {\em partial $d$-trees} for undirected graphs.} if
 it is isomorphic to a $k$-graph which can be built from $K_d^k$ by a sequence of degree-$d$ extensions.
See Figure~\ref{fig:3tree} for an example.

\begin{figure}[t]
\centering
\includegraphics[scale=0.5]{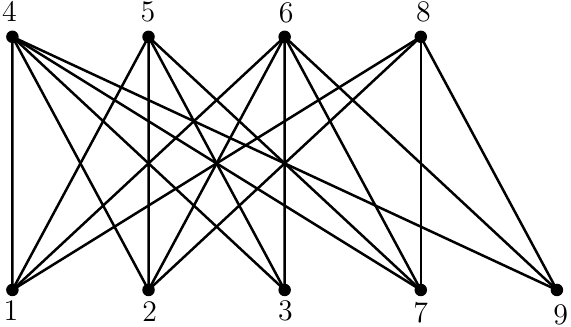}
\caption{An example of a $2$-partite $3$-tree ($k=2$ and $d=3$).
It can be build from $K_3^2$ on $\{1,2,3\}\cup\{4,5,6\}$ by adding $7, 8, 9$ by degree-$3$ extensions.}
\label{fig:3tree}
\end{figure}

Define a random variable
\[
M_d=\min \{m: \text{the minimum degree of $G(n,m)$ is at least $d$}\}
\]
 Our goal in this section is to prove the following theorem.
 \begin{theorem}\label{thm:random}
 Let $a,d,k$ be positive integers.
 Suppose $P_n$ is a polymatroid on the edge set of $K_n^k$ with rank at most $an$ for each $n\in \mathbb{N}$.
 Then, a.a.s.~the graph of the closure of $G(n,M_d)$ in $P_n$ contains a spanning $k$-partite $d$-tree.
 \end{theorem}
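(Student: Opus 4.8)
The plan is to adapt the proof strategy of Lew, Nevo, Peled, and Raz~\cite{lew2023sharp}, which establishes a sharp rigidity threshold for Erd\H{o}s--R\'enyi graphs, to our polymatroidal and $k$-partite setting. The essential idea is that once the minimum degree is large enough (here, at least $d$), one can build a spanning $d$-tree greedily by processing the vertices in a carefully chosen order and using the abundance of edges guaranteed near the threshold. The key conceptual point is that adding a vertex via a degree-$d$ extension increases the rank of the closure in $P_n$ by exactly $d$ whenever the $d$ new edges are ``independent enough'' relative to the current closure; since a $k$-partite $d$-tree has exactly $d(N-(k-1)) - \binom{d+1}{2}(k-1) \cdot 0$... more precisely, its edge count matches the target rank $d N - d(k-1)$ of a locally rigid graph, so spanning $d$-trees are the natural ``minimally rigid'' skeletons we want to extract.

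\medskip

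First I would set up the greedy/inductive construction. Fix the $k$ vertex classes $V_1,\dots,V_k$, each of size $n$. The core of the argument is a \emph{peeling} or \emph{reverse degree-$d$ extension} process: I would show that a.a.s.~$G(n,M_d)$ admits an ordering $v_N, v_{N-1},\dots, v_{1}$ of its vertices such that each $v_t$ (for $t$ large) has at least $d$ neighbors among $\{v_1,\dots,v_{t-1}\}$ \emph{in the closure graph}, and such that the last few vertices form a copy of $K_d^k$ in the closure. This is where the threshold $M_d$ enters: at $m = M_d$ the minimum degree is exactly $d$, and standard estimates (cf.~Proposition~\ref{prop:min_degree}) show that the number of low-degree vertices is small and they are spread out, so they can be peeled off first; the bulk of the graph then has higher degree and the peeling continues. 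The subtlety is that we need $d$ neighbors \emph{in the closure} $\cl_{P_n}(\cdot)$, not just in $G$ itself — but the closure only adds edges, so this is easier, not harder, and in fact the rank bound $r(E) \le an$ limits how much the closure can ``degenerate.''

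\medskip

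The main technical step, and the one I expect to be the hardest, is controlling the interaction between the combinatorial structure (which vertices have few neighbors) and the polymatroid rank function. In the graph case of~\cite{lew2023sharp}, one argues that a random graph at the minimum-degree threshold is, with high probability, \emph{globally rigid} by showing the rigidity matroid has full rank; the analogous statement here is that the closure of $G(n,M_d)$ in $P_n$ should span a $d$-tree. The obstacle is that $P_n$ is an arbitrary polymatroid of rank $\le an$, so we cannot use any specific algebraic structure — we must argue purely from the polymatroid axioms (R1), (R3), (R4) together with the combinatorics of $G(n,M_d)$. The key lemma I would aim for: if $G$ has minimum degree $\ge d$ and is ``sufficiently dense/connected'' away from a small exceptional set, then for \emph{every} polymatroid $P$ of rank $\le an$ on $E(G)$, the closure graph contains a spanning $d$-tree. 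I would prove this by a deletion-contraction / submodularity argument: if no spanning $d$-tree sits in the closure, there is a vertex subset $U$ that is ``underconnected'' in the closure, and submodularity of $r$ plus the rank bound $an$ forces $U$ to be small; but minimum degree $\ge d$ and the expansion properties of $G(n,M_d)$ (again from the random-graph analysis near the threshold, including a second-moment or union-bound argument over small sets) rule out small underconnected sets a.a.s.

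\medskip

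Finally I would assemble the pieces: (1) the random-graph lemma giving the degree and expansion structure of $G(n,M_d)$ a.a.s.; (2) the peeling order; (3) the polymatroid lemma showing each peeling step lifts to a rank-$d$ jump in the closure, so that after $N-d$ steps the closure graph contains all edges of a $d$-tree built on top of a base $K_d^k$. One point to handle with care is the base case — ensuring the ``innermost'' $d$ vertices in each class actually form (in the closure) a complete $k$-partite graph on a $K_d^k$; for this I would use that a small constant-size dense piece of $G(n,M_d)$ is present a.a.s.~and that the closure of anything containing enough edges on $kd$ vertices must contain $K_d^k$ when the polymatroid rank is not artificially capped below $d(kd) - d(k-1)$ — which holds since locally that rank is achievable. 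The union bound over orderings and exceptional sets is routine once the per-set probability estimates are in place, so the genuine difficulty is concentrated in the polymatroid expansion lemma.
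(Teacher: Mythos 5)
Your finishing step---peeling off the leftover vertices using the fact that near the minimum-degree threshold every not-too-large vertex set is $d$-extendable with genuine edges of $G(n,M_d)$---matches the paper's Lemma~\ref{lem:extension} and is sound. But the core of the proof is missing, and the replacement you propose for it would not work.

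The only place the hypothesis $r(E)\leq an$ can be used is the following stochastic-process argument (the paper's Lemma~\ref{lem:closure_size}): reveal the edges of $G(n,cn)$ one at a time; each revealed edge either already lies in the current closure or strictly increases the rank, and the latter can happen at most $an$ times. If the closure stayed below $(1-\delta)n^k$ throughout, each new uniformly random edge would escape the closure with probability at least $\delta$, so a Chernoff bound forces more than $an$ rank increments --- a contradiction. Hence $\cl_{P_n}(G(n,cn))$ is \emph{almost all of} $K_n^k$, and a density/supersaturation argument (Lemmas~\ref{lem:good_faces} and~\ref{lem:perfect}) then extracts from this nearly complete closure a $d$-tree covering a $(1-\gamma)$-fraction of each side. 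Your proposal inverts the logic: you say the rank bound ``limits how much the closure can degenerate'' and then pivot to a ``submodularity forces underconnected sets to be small'' lemma. Submodularity of $r$ gives no control over which \emph{edges} lie in the closure of $E(G(n,M_d))$; for an arbitrary polymatroid of rank $an$ (e.g.\ a rank-$an$ truncation of the free matroid on $E(K_n^k)$), the closure of any set of fewer than $an$ edges adds nothing at all, so no deterministic statement of the form ``minimum degree $d$ plus expansion implies the closure contains a spanning $d$-tree'' can hold. The largeness of the closure is irreducibly a probabilistic consequence of adding $cn>an$ \emph{random} edges, and your write-up never produces it.

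Your treatment of the base $K_d^k$ is also wrong. For $d\geq 2$ and $k\geq 3$, a first-moment computation shows $G(n,M_d)$ a.a.s.\ contains \emph{no} copy of $K_d^k$ (it has $\Theta(n\log n)$ edges, far below the appearance threshold of this constant-size dense subgraph), and, as above, the closure of a constant-size edge set in an arbitrary polymatroid need not contain any additional edges. The base clique can only be found inside the nearly complete closure of $G(n,cn)$; this is exactly what the paper's $t$-perfect-set argument delivers, and it cannot be replaced by ``a small dense piece of $G(n,M_d)$ is present a.a.s.'' Finally, your remark that a degree-$d$ extension ``increases the rank of the closure by exactly $d$'' presumes the polymatroid is a rigidity matroid; the theorem is stated for an arbitrary polymatroid and its conclusion is purely about the closure graph, so no such rank bookkeeping is available or needed.
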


 The proof of Theorem~\ref{thm:random} is an adaptation of that of 
 Lew et al.~\cite{lew2023sharp}, where they proved the corresponding result for the Euclidean graph rigidity matroids on complete graphs.
 Since Theorem~\ref{thm:random} deals with hypergraphs and moreover non-complete, we need a careful check if the argument of Lew et al.~can be adapted.

 As in the proof in \cite{lew2023sharp}, the proof consists of the following two main observations.

\begin{lemma}\label{lem:clique}
 Let $P_n$ be a polymatroid on the edge set of $K_n^k$ with rank at most $an$, and $\gamma$ be a constant with $0<\gamma<1$.
 Then there is a constant $c$ (which depends on $a, \gamma, k, d$ but does not on $n$) such that, 
 with probability at least $1-\left(\frac{1}{2k}\right)^{kn}$,
 the graph of the closure of $G(n, cn)$ in $P_n$ contains a $k$-partite $d$-tree whose vertex set has size at least $(1-\gamma)n$ on each side.
\end{lemma}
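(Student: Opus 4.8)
\textbf{Proof proposal for Lemma~\ref{lem:clique}.}

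The plan is to follow the strategy of Lew, Nevo, Peled, and Raz~\cite{lew2023sharp}: build a large $k$-partite $d$-tree inside the closure of a random subgraph by a two-phase greedy/probabilistic argument, seeding the construction with a small copy of $K_d^k$ and then repeatedly attaching new vertices by degree-$d$ extensions, where each new hyperedge is allowed to be any edge already in the closure rather than only edges physically present in $G(n,cn)$. The key point is that because $P_n$ has rank at most $an$ while a $k$-partite $d$-tree on $t$ vertices per side has roughly $d$ times that many edges once $t$ is large, the closure operator must ``collapse'' a positive fraction of the complete graph onto any maximal such tree — this is exactly the mechanism that lets us use only $cn$ random edges rather than the $\Theta(n^{k-1}\log n)$ edges one would naively need.

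First I would fix the constant $c$ large (depending on $a,\gamma,k,d$) and expose $G(n,cn)$ as a union of several independent sparse batches $G(n,c'n)$, reserving some batches for each phase. In the seeding phase, I would show that with probability at least $1-\tfrac12\left(\tfrac{1}{2k}\right)^{kn}$ the first batch already contains a copy of $K_d^k$: the probability that a fixed $K_d^k$ appears is a constant, the batches are independent, and using $\Theta(n)$ independent batches drives the failure probability below any fixed exponential. In the growth phase, I would run a greedy process: maintain a current $k$-partite $d$-tree $H$ realized inside the graph $\cl_{P_n}(E(H) \cup (\text{edges of the current batch}))$, and at each step try to attach one not-yet-used vertex $v$ on the sparsest side by finding $d$ edges through $v$ that land in the closure. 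The crucial combinatorial input is a ``closure is dense'' estimate: if $H$ already spans $t$ vertices on each side with $t$ not too small, then the rank bound $r(E(K_n^k)) \le an$ forces $\cl_{P_n}(E(H))$ to contain a constant fraction of all hyperedges through most vertices outside $H$ — because otherwise adding those vertices' edges would keep increasing the rank past $an$. I would quantify this via a counting/averaging argument over candidate vertices and use it to show that a freshly exposed batch of $\Theta(n)$ random edges hits, for a typical outside vertex $v$, at least $d$ edges that are absorbed into the closure, with failure probability exponentially small per vertex; a union bound over the at most $n$ vertices on each side then keeps the total failure probability below $\tfrac12\left(\tfrac{1}{2k}\right)^{kn}$ for $c$ large. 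Iterating until fewer than $\gamma n$ vertices remain unattached on every side gives the desired $d$-tree of size at least $(1-\gamma)n$ per side.

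The main obstacle, compared with the undirected complete-graph case of~\cite{lew2023sharp}, is the combination of two features absent there: the hypergraph structure and the fact that $G$ is $k$-partite and non-complete, so a degree-$d$ extension of $v$ must choose its $d$ hyperedges to be transversals meeting the $k-1$ already-built sides, and the relevant ``edge'' events in $G(n,cn)$ are transversals in $K_n^k$ rather than pairs. I expect the delicate step to be verifying that the closure-density estimate survives the $k$-partiteness constraint: one must argue that the collapsed fraction of the closure is spread across enough transversals through a typical new vertex, not concentrated on a few sides, so that $d$ suitable extension edges can always be found. I would handle this by bounding, for each new vertex $v$ and each choice of the $k-1$ ``old'' sides, the expected number of transversal edges through $v$ lying in $\cl_{P_n}(E(H))$ from below by a positive constant times $n^{k-2}$ (using the rank bound as above), so that a batch of $\Theta(n)$ random transversals contains at least $d$ of them with probability $1 - e^{-\Omega(n^{k-2})} \ge 1 - e^{-\Omega(1)}$, which after taking $c$ large becomes the required $1-\left(\tfrac{1}{2k}\right)^{kn}$ bound; the monotonicity and submodularity axioms (R3), (R4) of the polymatroid are what make ``adding edges to the closure cannot decrease what is already closed'' rigorous, via property (C2) of $\cl_{P_n}$.
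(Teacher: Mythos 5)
Your high-level plan (seed with a small complete $k$-partite piece, then attach the remaining vertices by degree-$d$ extensions using edges of the closure, with the rank bound $an$ forcing the closure of $cn$ random edges to be dense) matches the paper's strategy, and your Chernoff sketch for the density of the closure is essentially the paper's Lemma~\ref{lem:closure_size}. But there are two genuine gaps. First, the seeding phase as you describe it fails: for $d\geq 2$ and $k\geq 3$, the graph $G(n,cn)$ a.a.s.\ contains \emph{no} copy of $K_d^k$ at all, since the expected number of copies is of order $n^{dk}\left(\frac{cn}{n^k}\right)^{d^k}\to 0$; the probability that a fixed copy appears is polynomially small, not constant, and no batching repairs this. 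The seed must be found inside the \emph{closure} (which has $(1-\delta)n^k$ hyperedges), and extracting from a dense hypergraph a set $X$ with $|X\cap V_i|=d$ all of whose transversals lie in the closure \emph{and} each of whose partial transversals extends to $(1-t)n$ vertices on every remaining side is exactly the content of the paper's hierarchy of good faces $\Delta_I^{(t)}$ (Lemmas~\ref{lem:good_faces} and~\ref{lem:perfect}). This is the key idea missing from your proposal: mere density of the closure does not hand you a usable seed without this iterated-link argument.

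Second, your growth phase is both probabilistically broken and unnecessary. A fresh batch of $\Theta(n)$ uniformly random hyperedges gives a fixed vertex $v$ only $O(1)$ incident edges in expectation, so the event ``at least $d$ of them land in the closure through $v$'' holds with probability bounded away from $1$, not $1-e^{-\Omega(n^{k-2})}$; the union bound over $n$ vertices per side then fails, and in any case $1-e^{-\Omega(1)}$ cannot be upgraded to $1-\left(\frac{1}{2k}\right)^{kn}$ by enlarging $c$. Relatedly, your ``closure is dense'' estimate is mis-stated: the rank bound does \emph{not} deterministically force $\cl_{P_n}(E(H))$ to be large for an arbitrary tree $H$ (the closure of a small edge set can be small); density is a property of $\cl_{P_n}(G(n,cn))$ obtained via the random edge-exposure process. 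The paper sidesteps all of this by using no randomness in the growth phase: each new vertex $v$ is attached \emph{directly to the seed set $X$} via $d$ fixed transversals $\sigma_1,\dots,\sigma_d$ of $X$, and the good-face property guarantees deterministically that all but $tdn\leq\gamma n$ vertices on each side admit such an attachment inside the closure.
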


Let $G$ be a $k$-partite graph.
For a vertex set $X\subseteq V(G)$, a vertex $v\in X\cap V(G)$ is said to be {\em $d$-extendable} 
with respect to $X$ if $G$ has at least $d$ hyperedges consisting of 
$\{v\}$ and vertices in $V(G)\setminus X$.
A vertex set $X$ is said to be {\em $d$-extendable} in $G$ if 
$X$ contains a $d$-extendable vertex in $G$.

\begin{lemma}\label{lem:extension}
Asymptotically almost surely, $G(n,M_d)$ has the property that 
every vertex set $B$ with 
$|B\cap V_i| \leq \frac{n}{k}$ for all $i\in \{1,\dots, k\}$  is $d$-extendable in $G(n,M_d)$.
\end{lemma}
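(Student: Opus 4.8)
The plan is to reduce this to a first-moment (union bound) calculation over all potential "bad" vertex sets $B$, using the threshold value $M_d$ for the minimum degree. First I would observe that it suffices to bound the probability that $G(n,M_d)$ contains a vertex set $B$ with $|B\cap V_i|\le n/k$ for all $i$ such that \emph{no} vertex $v\in B$ has $d$ hyperedges meeting $V(G)\setminus B$ only. Since the event "minimum degree $\ge d$" holds for $G(n,M_d)$ by definition, every $v\in B$ already has degree at least $d$; the failure of $d$-extendability of $v$ with respect to $B$ means that fewer than $d$ of $v$'s incident hyperedges avoid $B\setminus\{v\}$, i.e.\ at least one of the (at least $d$) edges at $v$ must use another vertex of $B$. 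So a bad set $B$ is one where every vertex of $B$ is ``trapped'' by the other vertices of $B$, which forces $B$ to carry many internal edges relative to its size. The strategy is: first pass to the model $G(n,p)$ with $p=p^{(d)}_+$ (monotonicity of the relevant event and Proposition~\ref{prop:min_degree} let us transfer between $G(n,M_d)$ and $G(n,p^{(d)}_+)$ up to lower-order error), and then union-bound over all choices of $B$.

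The key steps, in order: (1) Fix a candidate bad set $B$ with $b_i:=|B\cap V_i|\le n/k$, and write $b=\sum_i b_i$. For $B$ to witness failure, each $v\in B$ must have at least one incident hyperedge whose remaining $k-1$ vertices lie outside $B$ in fewer than $d$ ways — equivalently, at most $d-1$ of the hyperedges at $v$ are ``$B$-avoiding'' among those present in $G$. (2) Show that this event is contained in the event that $B$ has a spanning sub-structure where each vertex sees $\ge d - (d-1) = 1$ chosen internal edge, but more efficiently: the number of $B$-avoiding potential hyperedges at $v$ is $\prod_{i: v\notin V_i}(n-b_i)$, which is $(1-o(1))n^{k-1}$ since each $b_i\le n/k$; hence the expected number of present $B$-avoiding edges at $v$ under $p=p^{(d)}_+$ is $(1-o(1))(\log n + (d-1)\log\log n + \log\log\log n)$, comfortably larger than $d$. (3) Therefore $\Pr[v \text{ not } d\text{-extendable w.r.t. } B] = \Pr[\text{Bin}(N_v, p) \le d-1]$ with $N_v=(1-o(1))n^{k-1}$, which is at most $e^{-(1-o(1))\log n}\cdot(\log n)^{O(1)} = n^{-1+o(1)}$ by a standard Chernoff/Poisson-tail estimate. (4) The events ``$v$ not $d$-extendable w.r.t.\ $B$'' for distinct $v\in B$ are determined by disjoint edge sets only if we are careful (an edge through two vertices of $B$ could be shared), but since a $B$-avoiding edge at $v$ uses no other vertex of $B$, the edge sets counted for different $v\in B$ are genuinely disjoint; hence the probability that \emph{every} $v\in B$ fails is at most $(n^{-1+o(1)})^{b}$. (5) Finally union-bound: the number of sets $B$ with a given profile $(b_1,\dots,b_k)$ is $\prod_i\binom{n}{b_i}\le \prod_i n^{b_i}=n^{b}$, and summing $n^{b}\cdot n^{-(1-o(1))b}=n^{-o(1)\cdot b}$ over all $b\ge 1$ and all profiles (there are at most $(n/k+1)^k = \text{poly}(n)$ profiles) still does \emph{not} obviously go to $0$ — so one needs the tail bound in step (3) to be sharper than $n^{-1+o(1)}$, namely $n^{-1}(\log\log\log n)^{-1+o(1)}$ or better, which the extra $\log\log\log n$ in $p^{(d)}_+$ supplies. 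With that refinement the per-vertex probability is $o(1/n)$, so $\sum_{B} o(1/n)^{|B|}\to 0$.

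The main obstacle I expect is step (3)–(5): getting the per-vertex failure probability to beat $1/n$ by a factor tending to infinity, uniformly over all admissible profiles $(b_1,\dots,b_k)$, and in particular handling small sets $B$ (where $|B|$ is bounded) separately from large sets. For small $B$ one uses that the failure probability for a single vertex is $o(1/n)$ and there are only $O(n^{|B|})$ sets, so the contribution is $O(n^{|B|}\cdot o(1/n)^{|B|})=o(1)$; for large $B$ (say $|B|\ge \log n$) even the crude bound $n^{-1+o(1)}$ per vertex suffices since $(n^{-1+o(1)})^{\log n}$ decays super-polynomially while the number of such $B$ is only $2^{N}\le 4^{n}$, and one checks $4^n\cdot n^{-(1-o(1))\log n}\to 0$. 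The delicate regime is intermediate $|B|$, where one must use the genuine $o(1/n)$ bound; this is exactly where the choice $p=p^{(d)}_+$ (with its $+\log\log\log n$ correction, as opposed to the threshold $\log n+(d-1)\log\log n$) is essential, mirroring the role this correction plays in Proposition~\ref{prop:min_degree}. A secondary technical point is the transfer between $G(n,M_d)$ and $G(n,p^{(d)}_+)$: since $M_d\le n^k p^{(d)}_+$ a.a.s., and adding edges only helps $d$-extendability, one can couple so that $G(n,M_d)\subseteq G(n,p^{(d)}_+)$ fails with probability $o(1)$, and work in the latter model.
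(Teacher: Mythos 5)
Your proposal contains a genuine gap, and it is precisely at the point you flag as the ``delicate regime.'' The problem is the direction of monotonicity in the transfer between models. The property ``every n.p.l.\ set $B$ is $d$-extendable'' is \emph{increasing} in the edge set, so to conclude it for $G(n,M_d)$ from a sandwich $G(n,p^{(d)}_-)\subseteq G(n,M_d)\subseteq G(n,p^{(d)}_+)$ you would need to prove it for the \emph{smaller} graph $G(n,p^{(d)}_-)$; proving it for $G(n,p^{(d)}_+)$, as you propose (``work in the latter model''), gives no information about $G(n,M_d)$, since the extendability could be witnessed by edges of $G_+\setminus G(n,M_d)$. But the statement is a.a.s.\ \emph{false} for $G(n,p^{(d)}_-)$: by Proposition~\ref{prop:min_degree} that graph a.a.s.\ has a vertex $v$ of degree at most $d-1$, and the singleton $B=\{v\}$ is then a non-$d$-extendable n.p.l.\ set. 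Quantitatively, in $G_-$ the per-vertex failure probability is of order $\log\log\log n/n$ (the $-\log\log\log n$ correction has the wrong sign for you), so the union bound over the $kn$ singletons diverges. Thus neither endpoint of the sandwich works on its own, and your attempt to rescue the union bound via the $+\log\log\log n$ correction in $p^{(d)}_+$ is exactly the step that is not available.

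The missing idea — which, ironically, you state but do not use — is that in $G(n,M_d)$ every vertex has degree at least $d$ by definition of the stopping time, so a vertex of $B$ that is not $d$-extendable must have an incident edge meeting $B$ in a second part $V_j$. The paper therefore bounds the probability of the \emph{conjunction} of two events: (i) ``$B$ is not $d$-extendable in $G_-$'' (a decreasing event, legitimately transferred to $G_-$) and (ii) ``$G_+$ contains an edge meeting $B$ in two distinct parts'' (an increasing event, legitimately transferred to $G_+$); a bad set for $G(n,M_d)$ produces both. Event (ii) contributes the extra factor $\min\{1,\,b_1b_2n^{k-2}p_+\}$ to each term of the union bound, which vanishes for singletons and is $O(\mathrm{polylog}(n)/n)$ whenever $b_1,b_2$ are polylogarithmic; this is what makes the sum converge in the small- and intermediate-$|B|$ regimes where your bound fails. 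Your step (4) on disjointness of the witnessing edge sets and your large-$|B|$ analysis are fine, but without the conjunction trick the argument does not close.
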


The proofs of Lemmas~\ref{lem:clique} and~\ref{lem:extension}
are provided in the next two subsections, respectively.

\begin{proof}[Proof of Theorem~\ref{thm:random}.]
By Lemma~\ref{lem:clique},
for an appropriate choice of a constant $c$,
with high probability 
the graph of the closure of $G(n,cn)$ in $P_n$ contains a $k$-partite $d$-tree $T$ whose  vertex set has size at least $(1-\gamma)n$ on each side.
By Proposition~\ref{prop:min_degree}, $M_d\gg cn$ if $n$ is sufficiently large.
Hence, for sufficiently large $n$, we may assume $G(n,cn)\subseteq G(n,M_d)$
and $T$ is in the closure of $G(n,M_d)$.
By Lemma~\ref{lem:extension}, a.a.s.~$T$ can be extended to a spanning $k$-partite $d$-tree $T^*$ by a sequence degree-$d$ extensions using only hyperedges of $G(n,M_d)$.
The resulting $T^*$ is contained in the closure of $G(n,M_d)$.
\end{proof}

\subsection{Proof of Lemma~\ref{lem:clique}}

Recall that ${\rm cl}_{P_n}$ denotes the closure operator of $P_n$.
We shall abuse the notation slightly and 
denote ${\rm cl}_{P_n}(E(G))$  by ${\rm cl}_{P_n}(G)$ for $G\subseteq K_n^k$.

\begin{lemma}\label{lem:closure_size}
Let $P_n$ be a polymatroid on the edge set of $K_n^k$ with rank at most $an$, and $\delta$ be any constant with 
$0<\delta<1$.
Then there is a constant $c$ (which depends on $a, \delta, k, d$ but does not on $n$) such that the closure of $G(n,cn)$ in $P_n$ has size at least $(1-\delta)n^k$ with probability at least $1-\left(\frac{1}{2k}\right)^n$.
\end{lemma}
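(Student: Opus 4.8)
The plan is to bound, for a single target edge $e\in E(K_n^k)$, the probability that $e$ is \emph{not} in $\cl_{P_n}(G(n,cn))$, and then take a union bound over all $n^k$ edges. The key structural input is that $P_n$ has rank at most $an$, so along any maximal chain $\emptyset = X_0 \subseteq X_1\subseteq \cdots$ of closed sets obtained by successively adding edges, the rank strictly increases at most $an$ times; equivalently, if we reveal the $cn$ random edges of $G(n,cn)$ one at a time and look at the partial closures, the closure "jumps" (i.e., the newly revealed edge is not already in the current closure) at most $an$ times. So among the $cn$ revealed edges, at least $cn - an$ of them fall \emph{inside} the closure generated by the previously revealed edges. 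The idea, following Lew et al.~\cite{lew2023sharp}, is that an edge $e$ outside the final closure behaves like an edge that is "avoided" by every one of these $cn$ random draws in a suitable conditional sense, and avoiding a fixed edge $cn$ times has probability roughly $(1-n^{-k})^{cn-an}$, which is not small enough by itself --- so one needs the amplification coming from the fact that there are many edges in the closure that could have "caught" $e$.

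The main step is therefore the following counting/probabilistic estimate. First I would set up the process: expose edges $e_1,\dots,e_{cn}$ of $G(n,cn)$ uniformly without replacement, let $C_t = \cl_{P_n}(\{e_1,\dots,e_t\})$, and call step $t$ a \emph{growth step} if $e_t\notin C_{t-1}$. By monotonicity of rank and $r(P_n)\le an$ there are at most $an$ growth steps. Now fix $e\in E(K_n^k)$ and condition on $e\notin C_{cn}$. At every non-growth step $t$, the edge $e_t$ lies in $C_{t-1}$, and by (C2) $C_{t-1}\subseteq C_{cn}$, so $e_t\in C_{cn}$; thus all but at most $an$ of the exposed edges lie in the \emph{fixed} (random) set $C_{cn}$, which does not contain $e$. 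Writing $S=C_{cn}$, the event forces: there is a set $S\subseteq E(K_n^k)\setminus\{e\}$ with $|S|\ge cn-an$ such that at least $cn-an$ of the $cn$ exposed edges lie in $S$. Since the exposed edges are a uniformly random $cn$-subset, $\Pr[\,\ge cn-an \text{ of them lie in a fixed } S\,]$ is maximized when $|S|$ is as large as possible, i.e.\ $|S|=n^k-1$; but if we are additionally assuming $e\notin S$ and want $|S|$ small, the point is the converse: if $|S| \le (1-\delta)n^k$ then this probability is at most $\binom{cn}{an}\big((1-\delta)n^k/(n^k-cn)\big)^{cn-an}$, which is at most $(1-\delta/2)^{cn}$ for $n$ large, and then $\binom{cn}{an}(1-\delta/2)^{cn}\le (1/(2k))^n n^{-k}$ once $c$ is chosen large enough depending on $a,\delta,k$. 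Taking the union bound over the $n^k$ choices of $e$ gives: with probability at least $1-(1/(2k))^n$, \emph{either} $|C_{cn}| > (1-\delta)n^k$ \emph{or} some fixed $e$ is simultaneously outside a closure of size $\le (1-\delta)n^k$ --- and the latter we have just shown is unlikely --- so $|\cl_{P_n}(G(n,cn))|\ge (1-\delta)n^k$ with the desired probability.

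The delicate point, and where I expect the real work to be, is making the dichotomy in the last sentence rigorous: one cannot literally say "either $|C_{cn}|$ is large or $e$ is outside a small closure," because $C_{cn}$ is itself random and its size is what we are trying to control. The clean way around this is to argue directly: $\Pr[\,|C_{cn}|\le (1-\delta)n^k\,] \le \sum_{e} \Pr[\,e\notin C_{cn}\ \wedge\ |C_{cn}|\le (1-\delta)n^k\,]\big/ (\delta n^k)$ is \emph{wrong} in general, so instead I would fix $e$, condition on the (random) pair $(S,\text{positions of growth steps})$ with $|S|\le (1-\delta)n^k$, and bound $\Pr[e\notin C_{cn}\mid |C_{cn}|\le(1-\delta)n^k]$ by the conditional exposure estimate above, uniformly over the conditioning; this requires checking that the sequential exposure of edges not at growth steps is, conditionally, a uniform sample from the current closure, so that "avoiding $e$" at each such step really does cost a factor bounded away from $1$. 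Handling the "without replacement" subtlety and the conditioning on which steps are growth steps --- this bookkeeping is exactly the part of the Lew et al.~argument that needs to be transcribed carefully to the $k$-partite hypergraph and general-polymatroid setting, and it is the main obstacle. Once that conditional estimate is in place, the union bound over $e$ and the choice of $c=c(a,\delta,k,d)$ are routine.
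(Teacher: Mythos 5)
Your overall plan --- fix a target edge $e$, bound $\mathbb{P}[e\notin \cl_{P_n}(G(n,cn))]$, and union bound over the $n^k$ edges --- does not work for a general polymatroid of rank at most $an$, and this is a genuine gap rather than a bookkeeping issue. For a single edge, the probability of lying outside the closure can be close to $1$: take $P_n$ with $r(X)=|X\cap F|$ for a fixed set $F$ of $an$ edges, so that $\cl_{P_n}(X)=(E(K_n^k)\setminus F)\cup(X\cap F)$. The conclusion of the lemma holds trivially here, yet for each $e\in F$ one has $\mathbb{P}[e\notin \cl_{P_n}(G(n,cn))]=1-cn/n^k\rightarrow 1$, so the unconditional per-edge bound announced in your first paragraph is false and the union bound over edges is useless. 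You partly recognize this and retreat to the joint event with $|C_{cn}|\le(1-\delta)n^k$, but the estimate $\binom{cn}{an}\bigl((1-\delta)n^k/(n^k-cn)\bigr)^{cn-an}$ you invoke is only valid for a \emph{fixed} set $S$, whereas $S=C_{cn}$ is determined by the very edges being exposed; the sequential conditional argument you defer to your last paragraph as ``the main obstacle'' is in fact the entire content of the proof, and you do not supply it. (Incidentally, the Markov-type inequality you dismiss as ``wrong in general'' is actually correct --- apply Markov's inequality to $n^k-|C_{cn}|$ --- it simply does not help.)

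The paper's proof avoids target edges entirely and is much shorter. Expose $e_1,\dots,e_{cn}$ one at a time and track $C_i=\cl_{P_n}(G(n,i))$. Conditional on the history, $e_i\notin C_{i-1}$ with probability $\frac{n^k-|C_{i-1}|}{n^k-(i-1)}$, which is at least $\delta$ whenever $|C_{i-1}|<(1-\delta)n^k$; and each such step strictly increases the rank, which can happen at most $an$ times. Since the closures are nested, the event $|C_{cn}|<(1-\delta)n^k$ forces all intermediate closures to be small and hence forces at most $an$ successes in $cn$ trials, each with conditional success probability at least $\delta$; this is dominated by $\mathbb{P}[{\rm Bin}(cn,\delta)\le an]$, which the Chernoff bound makes smaller than $(1/2k)^{kn}$ once $c$ is chosen large enough in terms of $a,\delta,k$. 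This is exactly the sequential estimate your sketch gestures at; once it is in place, the fixed edge $e$ and the union bound over edges are superfluous.
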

\begin{proof}
The proof is identical to \cite[Lemma 3.1]{lew2023sharp} (with a quantitative evaluation of the success probability for an application in Theorem~\ref{thm:1d_global_complex}).

Consider the stochastic process for constructing $G(n,cn)$, that is, 
starting from the $k$-graph $G(n,0)$ with $n$ vertices and no hyperedge,
construct $G(n,i)$ from $G(n,i-1)$ by adding one hyperedge $e_i$ from $E(K_n^k)\setminus E(G(n,i-1))$ uniformly at random.
Then, at each step, the size of the closure in $P_n$ changes if and only if
$e_i$ does not belong to ${\rm cl}_{P_n}(G(n,i-1))$.
Hence, the probability that this event happens is 
$\frac{n^k-|{\rm cl}_{P_n}(G(n,i-1))|}{n^k-(i-1)}$.
As long as $|{\rm cl}_{P_n}(G(n,i-1))|< (1-\delta)n^k$, this probability is further bounded as 
\[
\frac{n^k-|{\rm cl}_{P_n}(G(n,i-1))|}{n^k-(i-1)}\geq \delta.
\]

Since the rank of $P_n$ is bounded by $an$, the rank can change at most $an$ times during the whole process. Hence, 
\begin{align*}
&\mathbb{P}[|{\rm cl}_{P_n}(G(n,cn))|< (1-\delta)n^k] \\
&= \mathbb{P}[\text{the rank changes at most $an$ times and }  
|{\rm cl}_{P_n}(G(n,i))|<(1-\delta)n^k \text{ for } i=0, 1, \dots, cn] \\
&\leq 
\mathbb{P}[|\{j: j\leq cn, e_j\notin {\rm cl}_{P_n}(G(n,j-1)\}|\leq an \text{ and }  
|{\rm cl}_{P_n}(G(n,i))|<(1-\delta)n^k \text{ for } i=0, 1, \dots, cn-1] \\
&\leq  \mathbb{P}[{\rm Bin}(cn,\delta)\leq an]. 
\end{align*}

In order to evaluate the last term, 
we use the Chernoff bound for Poisson trials,
which states that,
for any $\varepsilon$ with $0< \varepsilon<1$,
\[
\mathbb{P}[{\rm Bin}(cn,\delta)\leq (1-\varepsilon)\delta cn] 
\leq e^{-\delta cn \varepsilon^2/2}.
\]
Hence, if $c$ is set to be sufficiently large so that 
\[
\delta c \varepsilon^2 /2 \geq k\log(2k) \text{ and }  (1-\varepsilon)\delta c \geq a,
\]
we have
\begin{align*}
\mathbb{P}[{\rm Bin}(cn,\delta)\leq an]
\leq \mathbb{P}[{\rm Bin}(cn,\delta)\leq (1-\varepsilon)\delta cn] 
\leq e^{-\delta cn \varepsilon^2/2} 
\leq \left( \frac{1}{2k}\right)^{kn}.
\end{align*}
\end{proof}

Lemma~\ref{lem:closure_size} only says that 
the size of ${\rm cl}_{P_n}(G(n,cn))$ is large,
whereas  our goal, Lemma~\ref{lem:clique}, states that the graph of ${\rm cl}_{P_n}(G(n,cn))$ actually contains a large $d$-tree.
We first show the existence of a small clique with a nice expansion property. 
For this, we now consider ${\rm cl}_{P_n}(G(n,cn))$ as a pure $(k-1)$-dimensional abstract simplicial complex and investigate properties of lower dimensional faces.
To avoid confusion, let $\Delta$ be the corresponding abstract simplicial complex, that is, the collection of all sets contained in some hyperedge in ${\rm cl}_{P_n}(G(n,cn))$.
Also, let $V_1,\dots, V_k$ be the partition of the vertex set of $K_{n}^k$.
For $I\subseteq \{1,\dots, k\}$, 
let 
\[
K_I:=\left\{\sigma: \sigma\subset \bigcup_{i\in I} V_i, |\sigma\cap V_i|=1 \text{ for } i\in I\right\}.
\]
Clearly, $|K_I|=n^{|I|}$.

Let $t$ be a constant with $0<t<1$ (which will be determined later).
For each $I\subseteq \{1,\dots, k\}$,
we define $\Delta_I^{(t)}\subseteq \Delta$ recursively as follows:
\begin{itemize}
\item for $I=\{1,\dots, k\}$, $\Delta_I^{(t)}:={\rm cl}_{P_n}(G(n,cn))$,
\item for $I\subsetneq \{1,\dots, k\}$ and $j\in \{1,\dots, k\}\setminus I$, 
\[
\Delta_{I,j}^{(t)}:=\left\{\sigma\in K_{I}: |\{v\in V_j: \sigma\cup\{v\}\in \Delta_{I\cup\{j\}}^{(t)}\}|\geq (1-t)n\right\}, and
\]
\item for $I\subsetneq \{1,\dots, k\}$,
\[
\Delta_I^{(t)}:=\bigcap_{j\in \{1,\dots, k\}\setminus I} \Delta_{I,j}^{(t)}.
\]
\end{itemize}
\begin{lemma}\label{lem:good_faces}
Let $\Delta$ be defined as above.
Let $\delta_k$ be a constant with $0<\delta_k<1$ and define $\delta_i$ by
\[
\delta_i=\frac{k\delta_{i+1}}{t} 
\]
for $i=k-1, k-2, \dots, 1, 0$.
Suppose that $\Delta_{\{1,\dots,k\}}^{(t)}\geq (1-\delta_k)n^k$ and $\delta_0<1$. Then for any $I\subseteq \{1,2,\dots, k\}$
\[
|\Delta_I^{(t)}|\geq (1-\delta_{|I|}) n^{|I|}.
\]
\end{lemma}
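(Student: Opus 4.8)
The plan is to argue by downward induction on $|I|$, starting from $I=\{1,\dots,k\}$ where the hypothesis $|\Delta_{\{1,\dots,k\}}^{(t)}|\ge (1-\delta_k)n^k$ is exactly the base case. So suppose the bound $|\Delta_J^{(t)}|\ge (1-\delta_{|J|})n^{|J|}$ has been established for every $J$ with $|J|=i+1$, and fix $I$ with $|I|=i$; I want $|\Delta_I^{(t)}|\ge (1-\delta_i)n^i$. Since $\Delta_I^{(t)}=\bigcap_{j\notin I}\Delta_{I,j}^{(t)}$ is an intersection of at most $k$ sets, by the union bound it suffices to show $|K_I\setminus \Delta_{I,j}^{(t)}|\le (k\delta_{i+1}/t)\cdot \tfrac{1}{k}\cdot n^i=\delta_{i+1} n^i/t$ for each individual $j\notin I$; summing these $k$ deficiencies and using $\delta_i=k\delta_{i+1}/t$ then gives the claim. (The condition $\delta_0<1$, and more generally the monotonicity it forces, ensures all the $\delta_i$ stay in $(0,1)$ so the bounds are non-vacuous.)

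The heart of the argument is therefore the single-step counting estimate: bounding the number of $\sigma\in K_I$ that are \emph{not} in $\Delta_{I,j}^{(t)}$, i.e. those $\sigma$ for which fewer than $(1-t)n$ vertices $v\in V_j$ satisfy $\sigma\cup\{v\}\in \Delta_{I\cup\{j\}}^{(t)}$. Call such a $\sigma$ \emph{bad}. I would count the pairs $(\sigma,v)$ with $\sigma\in K_I$, $v\in V_j$, and $\sigma\cup\{v\}\notin\Delta_{I\cup\{j\}}^{(t)}$ in two ways. On one hand, each $\sigma\cup\{v\}$ ranges over $K_{I\cup\{j\}}$, whose complement in $\Delta_{I\cup\{j\}}^{(t)}$ has size at most $\delta_{i+1}n^{i+1}$ by the induction hypothesis, so the number of such pairs is at most $\delta_{i+1}n^{i+1}$. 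On the other hand, every bad $\sigma$ contributes \emph{more than} $tn$ such pairs (since fewer than $(1-t)n$ of the $n$ choices of $v$ give a face, so more than $tn$ of them fail). Hence $(\text{number of bad }\sigma)\cdot tn < \delta_{i+1}n^{i+1}$, giving at most $\delta_{i+1}n^i/t$ bad $\sigma$, exactly as needed.

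I expect the only real subtlety — rather than a genuine obstacle — to be bookkeeping: making sure the recursively defined $\Delta_{I,j}^{(t)}$ and $\Delta_I^{(t)}$ are manipulated consistently (in particular that $\Delta_{I,j}^{(t)}\subseteq K_I$ so "complement" means complement inside $K_I$, and that $\Delta_{I\cup\{j\}}^{(t)}\subseteq$ the $(|I|)$-skeleton so restricting to faces of the form $\sigma\cup\{v\}$ with $\sigma\in K_I$, $v\in V_j$ is legitimate), and checking the propagation $\delta_i=k\delta_{i+1}/t$ together with $\delta_0<1$ keeps every $\delta_i<1$. Everything else is the elementary double-counting above; no probabilistic input is needed here since the randomness has already been absorbed into the hypothesis on $|\Delta_{\{1,\dots,k\}}^{(t)}|$ coming from Lemma~\ref{lem:closure_size}.
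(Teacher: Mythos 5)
Your proposal is correct and follows essentially the same route as the paper: downward induction on $|I|$ with the base case given by the hypothesis, and for each $j\notin I$ a double-counting of the pairs $(\sigma,v)$ with $\sigma\cup\{v\}\notin\Delta_{I\cup\{j\}}^{(t)}$ (the paper writes this as the sum $\sum_{\sigma\in K_I}(n-|N_j(\sigma)|)=n^{|I|+1}-|\Delta_{I\cup\{j\}}^{(t)}|$), followed by a union bound over the at most $k$ indices $j$. The bookkeeping points you flag are handled exactly as you anticipate, so there is nothing to add.
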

\begin{proof}
The proof is done by induction on $|I|$.
The base case when $I=\{1,\dots, k\}$ follows from the assumption that 
$\Delta_{\{1,\dots,k\}}^{(t)}\geq (1-\delta_k)n^k$.

Consider the case of $I\subsetneq \{1,\dots, k\}$ assuming the statement for any $I'$ with $|I'|>|I|$.
For $\sigma\in K_I$ and $j\in \{1,\dots, k\}\setminus I$, let $N_{j}(\sigma):=\{v\in V_j: \sigma+v\in \Delta^{(t)}_{I\cup\{j\}}\}$.
Then by the definition of $\Delta_{I,j}^{(t)}$,
$\sigma\in K_I$ is not in $\Delta_{I,j}^{(t)}$ if and only if 
$|N_{j}(\sigma)|<(1-t)n$, or equivalently $tn< n-|N_{j}(\sigma)|$.
Taking the sum of this inequality over all $\sigma\in K_I\setminus \Delta_{I,j}^{(t)}$, we obtain
\begin{align*}
(n^{|I|}-|\Delta_{I,j}^{(t)}|)tn &\leq \sum_{\sigma\in K_I\setminus \Delta_{I,j}^{(t)}} (n-|N_j(\sigma)|) \\
&\leq \sum_{\sigma\in K_I} (n-|N_j(\sigma)|) \qquad \text{(by $n\geq |N_j(\sigma)|$)}\\
&=n^{|I|+1}-|\Delta_{I+j}^{(t)}| \qquad \text{(by $\sum_{\sigma\in K_I} n=n^{|I|+1}$ and $\sum_{\sigma\in K_I}|N_j(\sigma)|=|\Delta_{I\cup\{j\}}^{(t)}|$)}\\
&\leq \delta_{|I|+1} n^{|I|+1} \qquad \text{(by induction)},
\end{align*}
where the equation $\sum_{\sigma\in K_I}|N_j(\sigma)|=|\Delta_{I\cup\{j\}}^{(t)}|$ used at the third equation follows from the fact that 
$N_j(\sigma)$ and $N_j(\sigma')$ are disjoint for distinct $\sigma, \sigma'\in K_I$ and $\Delta_{I\cup\{j\}}^{(t)}=\{\sigma\cup\{v\}: \sigma\in K_I, v\in N_j(\sigma)\}$.
Thus, we obtain
$|\Delta_{I,j}^{(t)}|\geq \left(1-\frac{\delta_{|I|+1}}{t}\right)n^{|I|}$
for any $j\notin I$.
Since $\Delta_I^{(t)}=\bigcap_{j: j\notin I} \Delta_{I,j}^{(t)}$,
we obtain 
$|\Delta_{I}^{(t)}|\geq \left(1-\frac{k\delta_{|I|+1}}{t}\right)n^{|I|}
=\left(1-\delta_{|I|} \right)n^{|I|}$ as required.
\end{proof}

Let $\Delta^{(t)}:=\bigcup_{I\subseteq\{1,\dots, k\}} \Delta_I^{(t)}$.
A vertex set $X$ is said to be {\em $t$-perfect} in $\Delta$ if 
$X$ induces a complete $k$-partite sub-complex in $\Delta$
such that each non-empty face of the sub-complex belongs to $\Delta^{(t)}$.

\begin{lemma}\label{lem:perfect}
Let $\Delta$ be as defined above.
Let $d$ be a positive integer, and $t$ and $\delta_k$ be real numbers satisfying 
\[
t\leq \frac{1}{4(d+1)^{k-1}} \text{ and } \delta_k\leq \frac{1}{4} \left( \frac{t}{k}\right)^{k-1}.
\]
Suppose $|\Delta_{\{1,\dots,k\}}^{(t)}|\geq (1-\delta_k)n^k$.
Then $\Delta$ contains a $t$-perfect vertex set $X$ satisfying $|X\cap V_i|=d$ for all $1\leq i\leq k$.
\end{lemma}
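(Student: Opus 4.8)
The plan is to build the $t$-perfect set $X$ greedily, one "layer" at a time, choosing $d$ vertices from each class $V_1,\dots,V_k$ in turn, and at each step using the density estimates of Lemma~\ref{lem:good_faces} to guarantee that a valid choice survives. First I would invoke Lemma~\ref{lem:good_faces}: from the hypothesis $|\Delta_{\{1,\dots,k\}}^{(t)}|\ge (1-\delta_k)n^k$ together with the recursion $\delta_i = k\delta_{i+1}/t$, we get $|\Delta_I^{(t)}|\ge (1-\delta_{|I|})n^{|I|}$ for all $I$, provided $\delta_0<1$; the chosen bounds $t\le \tfrac{1}{4(d+1)^{k-1}}$ and $\delta_k\le \tfrac14(t/k)^{k-1}$ are exactly what is needed to make $\delta_0 = (k/t)^k\delta_k$ (and hence every intermediate $\delta_i$) small enough — in fact $\delta_{|I|}\le \tfrac14(t/k)^{|I|-1}\cdot(k/t)^{k-1}$... more carefully, $\delta_i = (k/t)^{k-i}\delta_k$, so $\delta_1 = (k/t)^{k-1}\delta_k \le \tfrac14$, which is the key smallness we will repeatedly use.

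The main construction is an induction building the set $X$ class by class. Suppose we have already selected a $t$-perfect partial configuration using vertices $x^{(j)}_1,\dots,x^{(j)}_{a_j}$ in class $V_j$ for $j$ in some downward-closed pattern; I would actually organize it as $d$ rounds, where in round $r$ ($r=1,\dots,d$) we pick one new vertex from each of $V_1,\dots,V_k$. The invariant to maintain is: the set $Y$ of all vertices chosen so far has the property that \emph{every} transversal $\sigma$ of the already-completed classes, extended by any partially-chosen vertices, lies in the appropriate $\Delta_I^{(t)}$. When we come to pick the next vertex in class $V_j$, we need a vertex $v\in V_j$ such that for \emph{every} face $\sigma$ already built on $Y\cap(\bigcup_{i\ne j}V_i)$ (there are at most $d^{k-1}$ of them), $\sigma\cup\{v\}$ lies in $\Delta^{(t)}_{I\cup\{j\}}$. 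By the definition of $\Delta^{(t)}_{I,j}$, each such face $\sigma$ — being itself in $\Delta_I^{(t)}\subseteq\Delta^{(t)}_{I,j}$ — admits at least $(1-t)n$ good extensions in $V_j$; intersecting the at most $d^{k-1}$ bad sets of size $<tn$ each, the number of simultaneously-good $v$ is at least $n - d^{k-1}tn = (1-d^{k-1}t)n > 0$ by the bound $t\le \tfrac{1}{4(d+1)^{k-1}}$. Moreover we must avoid the at most $(d-1)$ previously chosen vertices of $V_j$, which is fine since $(1-d^{k-1}t)n > d$ for large $n$. We also need the very first face — a single vertex, or the empty face — to be available, which is where $|\Delta_I^{(t)}|\ge(1-\delta_{|I|})n^{|I|}$ with small $\delta_{|I|}$ guarantees a nonempty starting point; concretely, $\Delta^{(t)}_{\{j\}}$ is nonempty since $\delta_1<1$, giving the initial vertex in the first class.

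The step I expect to be the main obstacle is bookkeeping the induction so that \emph{all} lower-dimensional faces of the eventual complete $k$-partite complex on $X$ — not just the top-dimensional ones — are certified to lie in $\Delta^{(t)}$. The cleanest way is to process the classes in a fixed order $V_1,V_2,\dots,V_k$ and, when adding the $r$-th vertex to $V_j$, to require it to be a good extension of every already-built face supported on the \emph{earlier} classes $V_1,\dots,V_{j-1}$ together with the already-chosen vertices of $V_{j+1},\dots,V_k$ from earlier rounds; since membership in $\Delta^{(t)}_I$ for smaller $I$ follows from the recursive definition once all the one-step extensions check out, an inductive argument on $|I|$ (from $k$ down to $0$) shows that every face of the final $X$ of every dimension sits in the corresponding $\Delta_I^{(t)}$, i.e. $X$ is $t$-perfect. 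Throughout, the only arithmetic needed is that $d^{k-1}t<1$ and that each $\delta_i<1$, both of which are immediate from the stated hypotheses on $t$ and $\delta_k$, so the proof is a matter of setting up the right ordered greedy process and verifying the invariant is preserved at each of the $\le dk$ steps.
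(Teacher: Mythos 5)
Your proposal is correct and follows essentially the same greedy, class-by-class argument as the paper: maintain the invariant that every partial transversal of the already-chosen vertices lies in $\Delta^{(t)}$, and at each step union-bound the excluded vertices using the $(1-t)n$ extension property from the definition of $\Delta^{(t)}_{I,j}$ together with the density bound $\delta_1=(k/t)^{k-1}\delta_k\le\tfrac14$ from Lemma~\ref{lem:good_faces}. Two bookkeeping points to tighten: the number of nonempty partial transversals on the other classes is $(d+1)^{k-1}-1$ rather than $d^{k-1}$ (harmless, since the hypothesis is $t\le\tfrac{1}{4(d+1)^{k-1}}$), and the singleton condition $\{v\}\in\Delta^{(t)}_{\{j\}}$ must be counted as excluding up to $\delta_1 n$ vertices \emph{in addition to} the $tn$-per-face exclusions (so the relevant inequality is $\delta_1+t(d+1)^{k-1}\le\tfrac12<1-d/n$, not merely that each term is below $1$) — one cannot instead route the singleton condition through $\emptyset\in\Delta^{(t)}_{\emptyset}$, since the hypotheses do not force $\delta_0<1$.
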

\begin{proof}
Let $V_1, V_2, \dots, V_k$ be the partition of the vertex set of $\Delta$.
We inductively construct $X_i\subseteq V_i$ 
from $i=1$ through $i=k$ in such a way that  $|X_i|=d$ and
$X_1\cup X_2\cup \dots \cup X_i$ forms a perfect set on $V_1\cup V_2\cup \dots V_i$, i.e., every partial transversal on $X_1\cup X_2\cup \dots \cup X_i$ belongs to $\Delta^{(t)}$.
Then $\bigcup_{i=1}^k X_i$ would be a perfect set.

Suppose that we have already constructed such sets $X_1, X_2, \dots, X_{j-1}$, and consider now constructing $X_j\subseteq V_j$.
(Initially, $j=1$.) 
Our task is to find $d$ vertices $v\in V_j$ such that 
every partial transversal in $X_1\cup \dots \cup X_{j-1}\cup\{v\}$ containing $v$ is in $\Delta^{(t)}$.
Equivalently, 
\begin{itemize}
\item[(i)] $\{v\}\in \Delta^{(t)}$, and
\item[(ii)] for every non-empty partial transversal $\sigma$ in $X_1\cup X_2\cup \dots \cup X_{j-1}$, $\sigma+v\in \Delta^{(t)}$.
\end{itemize}
We count the number of vertices in $V_j$ which do not satisfy (i) and (ii).
By Lemma~\ref{lem:good_faces}, the number of vertices which does not satisfy (i) is bounded by $\delta_1 n$,
where $\delta_1$ is as defined in the statement of Lemma~\ref{lem:good_faces}.
As for (ii), for each non-empty partial transversal $\sigma$ in $X_1\cup X_2\cup \dots \cup X_{j-1}$,
$\sigma$ belongs to $\Delta^{(t)}$, so the definition of $\Delta_{I,{j}}^{(t)}$ implies that the number of vertices $v\in V_j$ having $\sigma+v\not\in \Delta^{(t)}$ is bounded by $tn$. Since there are $(d+1)^{j-1}-1$ choices for $\sigma$, the number of vertices in $V_j$ violating (ii) is bounded by $(d+1)^{j-1}tn$.
In total, the number of vertices in $V_j$ violating (i) and (ii) is bounded by $\delta_1 n+(d+1)^{j-1}tn$.

By the definition of $\delta_i$, 
$\delta_i=\left( \frac{k}{t}\right)^{k-i}\delta_k$.
So, if 
\begin{equation}\label{eq:delta_bound}
\left(\left( \frac{k}{t}\right)^{k-1}\delta_k+t(d+1)^{k-1}\right)n\leq n-d,
\end{equation}
then there are at least $d$ vertices in $V_j$ satisfying (i) and (ii), and the desired $X_j$ is obtained by picking any $d$ vertices in $V_j$ satisfying (i) and (ii).
Moreover,  (\ref{eq:delta_bound}) indeed holds because 
$t\leq \frac{1}{4(d+1)^{k-1}}$ and $\delta_k\leq \frac{1}{4} \left( \frac{t}{k}\right)^{k-1}$ by the lemma assumption.
\end{proof}

\begin{proof}[Proof of Lemma~\ref{lem:clique}]
Let 
\[
t=\min\left\{\frac{1}{4(d+1)^{k-1}},\frac{\gamma}{d}\right\} \text{ and } \delta_k=\frac{1}{4} \left( \frac{t}{k}\right)^{k-1}.
\]
By Lemma~\ref{lem:closure_size}, there is a constant $c$ such that 
the closure of $G(n,cn)$ has size at least $(1-\delta_k)n^k$ with probability at least $1-\left(\frac{1}{2k}\right)^{kn}$.
Let $\Delta$ be the simplicial complex corresponding to ${\rm cl}_{P_n}(G(n,cn))$,
and define $\Delta^{(t)}$ as above.
By Lemma~\ref{lem:perfect}, $\Delta$ contains a $t$-perfect vertex set $X$ with $|X\cap V_i|=d$.

Pick $d$ distinct transversals $\sigma_1,\dots, \sigma_d$ on $X\cap V_2, X\cap V_3, \dots, X\cap V_k$.
Since $X$ is perfect, each $\sigma_i$ belongs to $\Delta^{(t)}_{\{2,3,\dots, k\}}$.
In particular, there are at least $(1-t)n$ vertices $v\in V_1$ satisfying $\sigma_i+v\in \Delta^{(t)}_{\{1,2,\dots, k\}}$, or equivalently $\sigma_i\cup\{v\}\in {\rm cl}_{P_n}(G(n,cn))$.
Let $V_1(\sigma_i)=\{v\in V_1: \sigma_i+v\in {\rm cl}_{P_n}(G(n,cn))\}$
and let $V_1^*=(X\cap V_1)\cup \bigcap_{i=1}^d V_1(\sigma_i)$.
Then, for each $v\in V_1^*\setminus X$ and for all $i=1,\dots, d$, ${\rm cl}_{P_n}(G(n,cn))$ contains the hyperedge $\sigma_i\cup\{v\}$.
Since $|V_1(\sigma_i)|\geq (1-t)n$,
we have $|V_1^*|\geq (1-td)n\geq (1-\gamma)n$.

By the symmetry of indices, 
$V_j^*$ can be defined similarly for each $j=1,\dots, k$.
Then, for each $v\in V_j^*\setminus X$,
${\rm cl}_{P_n}(G(n,cn))$ contains $d$ hyperedges consisting of $v$ and vertices in $X$.
Therefore, starting from the balanced complete $k$-partite $k$-graph induced by $X$, one can built a $d$-tree $T$ on $V_1^*\cup V_2^*\cup \dots \cup V_k^*$ by a sequence of degree-$d$ extensions by using only hyperedges in ${\rm cl}_{P_n}(G(n,cn))$.
Since $|V_j^*|\geq (1-td)n\geq (1-\gamma)n$ for all $j$,
the resulting $d$-tree has $(1-\gamma)n$ vertices on each size. 
\end{proof}

\subsection{Proof of Lemma~\ref{lem:extension}}
The aim of this section is to confirm that the counting argument of \cite{lew2023sharp} indeed works even for balanced complete $k$-partite $k$-graphs. 

\begin{proof}[Proof of Lemma~\ref{lem:extension}]
Let $p_-$ and $p_+$ be as defined in (\ref{eq:p}).
For simplicity of notation, 
we denote $G(n,p_-),G(n,p_+), G(n,M_d)$
by $G_-, G_+, G_d$, respectively.
There is a coupling between two random graph models $G(n,p)$ and $G(n,m)$, 
for which a.a.s.~one can force $G_-\subseteq G_d \subseteq G_+$. (See the first paragraph of Section 4 in \cite{lew2023sharp}.) 
Hence, we may assume $G_-\subseteq G_d \subseteq G_+$ in the subsequent discussion.
In particular, $G_-, G_d, G_+$ share the same vertex set, denoted by $V_1\cup V_2 \cup \dots \cup V_k$.

We say that a vertex set $B$ is a {\em not particularly large set (an n.p.l.~set for short)} if $|B\cap V_i|\leq \frac{n}{k}$ for all $i\in \{1,\dots, k\}$.
The main observation is the following claim.

\begin{claim}\label{claim:extension1}
The probability that there is an n.p.l.~vertex set $B$ 
such that 
\begin{itemize}
\item[(i)] $B$ is not $d$-extendable vertex in $G_-$, and 
\item[(ii)] $G_+$ contains an edge $e$ with $e\cap B\cap V_1\neq \emptyset \neq e\cap B\cap V_2$,
\end{itemize}
tends to zero if $n\rightarrow \infty$.
\end{claim}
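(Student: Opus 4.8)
The proof is a first-moment (union bound) argument over candidate bad sets $B$, following the skeleton of Lew et al.~\cite{lew2023sharp} but adapted to the $k$-partite $k$-graph setting. First I would observe that it suffices to bound the probability of the stated event over \emph{minimal} witnesses: if some n.p.l.~set $B$ satisfies (i) and (ii), we may shrink it to a set $B'$ that still contains the relevant two vertices $u_1\in B\cap V_1$ and $u_2\in B\cap V_2$ witnessing the edge $e$ in (ii), keeps (i) (non-$d$-extendability is monotone under taking subsets, since fewer ``outside'' vertices only makes extension harder — wait, it is the opposite: I would instead fix the pair $(u_1,u_2)$ and take $B$ to be as \emph{large} as possible, namely one maximal n.p.l.~set, because non-$d$-extendability of a vertex $v$ with respect to $B$ becomes \emph{easier} to violate as $B$ grows; so I would quantify only over the pair $(u_1,u_2)$ and the single canonical set $B = V \setminus (\text{something})$, or more precisely reduce to counting over the pair together with, for each vertex $v$ of $B$, a small ``blocking'' set of hyperedges). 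Concretely: if $B$ is not $d$-extendable in $G_-$, then \emph{every} vertex $v\in B$ has at most $d-1$ hyperedges of $G_-$ meeting $\{v\}$ and otherwise lying in $V\setminus B$.

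Next I would set up the union bound. Fix an ordered pair $(u_1,u_2)\in V_1\times V_2$; there are $n^2$ choices. For each such pair, I want to bound the probability that (a) $G_+$ has a hyperedge containing both $u_1$ and $u_2$, and (b) there exists an n.p.l.~set $B\ni u_1,u_2$ that is not $d$-extendable in $G_-$. Event (a) has probability roughly $n^{k-2}p_+ = O\!\big(\tfrac{\log n}{n}\big)$ by a union bound over the $n^{k-2}$ hyperedges through $u_1,u_2$. For (b), I would exploit independence: conditioning on (a) only fixes the status of hyperedges through both $u_1$ and $u_2$, whereas the non-$d$-extendability of most vertices of $B$ in $G_-$ depends on disjoint hyperedge-slots. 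The key quantitative estimate is that, for a fixed vertex $v$ and a fixed n.p.l.~set $B$, the probability that $v$ has fewer than $d$ hyperedges of $G_-$ going from $v$ into $V\setminus B$ is small: the number of such potential hyperedges is at least $\prod_{i\ne j}(n - |B\cap V_i|) \ge (n - n/k)^{k-1} = \big(\tfrac{k-1}{k}\big)^{k-1} n^{k-1}$ when $v\in V_j$, so this count is $\Theta(n^{k-1})$, and with $p_- = \Theta\!\big(\tfrac{\log n}{n^{k-1}}\big)$ the expected degree is $\Theta(\log n)$, giving a Poisson-type tail $\Pr[\text{Bin}(\Theta(n^{k-1}), p_-) \le d-1]$ of order $n^{-c}$ for a constant $c>1$ that can be made large.

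Then I would combine these. The subtle point — and the main obstacle — is that I cannot simply multiply $n^{-c}$ over all $\approx \tfrac{2(k-1)}{k}\,n$ vertices of $B$, because the events ``$v$ is non-extendable w.r.t.~$B$'' for different $v$ are \emph{not} independent (two vertices $v,v'\in B$ can share a hyperedge leaving $B$, so the relevant slots overlap). Lew et al.~handle this by a careful pairing/partitioning of hyperedge-slots so that one extracts a genuinely independent sub-family; I would replicate this: for each $v\in B$ select a private collection of $\Theta(n^{k-1})$ hyperedges through $v$ that is disjoint across different $v$ (possible because each hyperedge meets $B$ in at most $k$ vertices, so a greedy assignment loses only a constant factor), restrict attention to those, and note that their presence/absence in $G_-$ is mutually independent and independent of event (a). This yields, for a \emph{fixed} n.p.l.~$B$ containing $u_1,u_2$, a bound of the form $\big(n^{-c}\big)^{\Theta(n)} = e^{-\Theta(n\log n)}$ on being non-$d$-extendable. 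Finally, to pass from ``fixed $B$'' to ``exists $B$'', I would note that it is enough to take $B$ maximal, hence there are at most $2^{O(n)}$ relevant sets $B$ (in fact one can be even more economical), and $2^{O(n)} \cdot n^2 \cdot O\!\big(\tfrac{\log n}{n}\big) \cdot e^{-\Theta(n\log n)} \to 0$. The remaining care is bookkeeping: making sure the disjoint-slot selection is compatible with the conditioning in (ii), and that the constants in $p_\pm$ from \eqref{eq:p} give a tail exponent $c$ strictly exceeding what the $2^{O(n)}$ and $n^2$ factors demand — this is where one uses that $p_-$ is only slightly below the degree-$d$ threshold, so $\log\log\log n$ corrections are harmless.
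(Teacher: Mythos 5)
Your overall strategy (union bound over bad sets $B$, Poisson-type tail for the binomial degree count, Markov for the edge in (ii)) is the same as the paper's, but two of your key quantitative claims are wrong, and they are exactly where the difficulty of this claim lives.

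First, the tail bound. You assert that $\Pr[\mathrm{Bin}(\Theta(n^{k-1}),p_-)\le d-1]$ is ``of order $n^{-c}$ for a constant $c>1$ that can be made large.'' It is not. With $p_-=\frac{\log n+(d-1)\log\log n-\log\log\log n}{n^{k-1}}$ the mean is $(1-o(1))\log n$, and the tail is $(1+o(1))\frac{\log\log n}{n}$ --- exponent exactly $1$, with only a polyloglog correction. This is forced by the setup: $p_-$ sits just \emph{below} the minimum-degree-$d$ threshold, so a.a.s.\ $G_-$ actually contains vertices of degree $<d$; a tail of order $n^{-c}$ with $c>1$ would contradict that. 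Consequently your claim that a fixed n.p.l.\ set $B$ fails to be $d$-extendable with probability $e^{-\Theta(n\log n)}$ holds only when $|B|=\Theta(n)$; for $|B|=O(1)$ the probability is only $\bigl(\Theta(\log\log n/n)\bigr)^{|B|}$, and the union bound over the $\approx n^{|B|}$ such sets \emph{diverges}. This is precisely why condition (ii) is in the statement: the factor $\min\{1,b_1b_2n^{k-2}p_+\}=O(b_1b_2\log n/n)$ must be kept \emph{inside} the sum over set sizes to rescue the small-$B$ regime. Your final accounting, $2^{O(n)}\cdot n^2\cdot O(\log n/n)\cdot e^{-\Theta(n\log n)}\to 0$, pairs the worst-case count (attained by large $B$) with the best-case probability (valid only for large $B$); the actual proof must sum over $b_1,\dots,b_k$ and verify convergence in each range, which is where essentially all of the paper's work goes (the function $h(b)$ and the splits at $b=(\log\log n)^2$ and $b=e^3\log\log n$).

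Second, the ``main obstacle'' you identify --- dependence among the events ``$v$ is not $d$-extendable w.r.t.\ $B$'' for distinct $v\in B$ --- does not exist. A hyperedge witnessing $d$-extendability of $v$ consists of $v$ together with $k-1$ vertices of $V\setminus B$, so it meets $B$ in exactly the one vertex $v$; the relevant hyperedge families for distinct $v,v'\in B$ are therefore disjoint, and in $G(n,p_-)$ the events are exactly independent. The paper uses this directly to write $\Pr[B\text{ not }d\text{-extendable}]=\prod_{i=1}^k\bigl(\Pr[\mathrm{Bin}(\prod_{j\ne i}(n-b_j),p_-)\le d-1]\bigr)^{b_i}$. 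Your slot-partitioning workaround is harmless but unnecessary. Relatedly, the reduction to \emph{maximal} $B$ is not justified (non-$d$-extendability is not monotone under enlarging $B$: new vertices of $B'\setminus B$ may themselves be $d$-extendable), though it is also not needed, since all n.p.l.\ sets already number at most $2^{kn}$.
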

\begin{proof}
Consider an n.p.l.~vertex set $B$ with $b_i:=|B\cap V_i|\leq \frac{n}{k}$.
For a vertex $v\in V_i$, $K_n^k$ has $\prod_{j: j\neq i} (n-b_j)$ edges contained in $\{v\}\cup \bigcup_{j: j\neq i} (V_j\setminus B)$. Hence, 
the probability that $v$ is not $d$-extensible in $G_-$ is equal to the probability that ${\rm Bin}\left(\prod_{j: j\neq i} (n-b_j),p_-\right)\leq d-1$ holds.
Since $b_i=|B\cap V_i|$, the probability that $B$ has no $d$-extendable vertex is 
\[
\prod_{i=1}^k \left( \mathbb{P}[{\rm Bin}(\prod_{j: j\neq i} (n-b_j),p_-)\leq d-1]\right)^{b_i}.
\]
On the other hand, the number of hyperedges $e$ in $K_n^k$ satisfying 
$e\cap B\cap V_1\neq \emptyset \neq e\cap B\cap V_2$ is 
$b_1b_2n^{k-2}$.
Hence, by Markov's inequality, the probability that 
$B$ satisfies (ii) is bounded by $b_1b_2n^{k-2}p_+$.
In total, the probability that there is an n.p.l.~set $B$ of (i) and (ii) is bounded by 
\begin{equation}\label{eq:prob}
\sum_{b_1=1}^{\frac{n}{k}} 
\dots \sum_{b_k=1}^{\frac{n}{k}} \left( \prod_{i=1}^k \binom{n}{b_i} \right) \left( \prod_{i=1}^k \left( \mathbb{P}[{\rm Bin}(\prod_{j: j\neq i} (n-b_j),p_-)\leq d-1]\right)^{b_i} \cdot \min\{1,b_1b_2n^{k-2}p_+\}\right).
\end{equation}
The remaining discussion is devoted to evaluating (\ref{eq:prob}).

For a given $i\in \{1,\dots,k\}$, we have
\begin{align*}
&\mathbb{P}[{\rm Bin}(\prod_{j:j\neq i}(n-b_j),p_-)\leq d-1]\\
&=\sum_{\ell=0}^{d-1}\binom{\prod_{j:j\neq i}(n-b_j)}{\ell}(p_-)^{\ell}(1-p_-)^{-\ell+\prod_{j:j\neq i}(n-b_j)} \quad (\text{by the definition of binary distribution})\\
&\leq \sum_{\ell=0}^{d-1} n^{(k-1)\ell} (p_-)^\ell (1+o(1))e^{-\left(\prod_{j:j\neq i}(n-b_j)\right)p_-}
\quad (\text{since $1+x\leq e^x$, $\ell\leq d$, and $d$ is constant}).
\end{align*}
Denote $g(n)=\log n+(d-1)\log\log n-\log\log \log n$ for simplicity.
Then $p_-=g(n)/n^{k-1}$. By the Weiastrauss product inequality,
we have 
\[\left(\prod_{j:j\neq i} (n-b_j)\right)p_-=g(n) \left( \prod_{j: j\neq i} \left(1-\frac{b_j}{n}\right) \right)\geq g(n)\left(1-\frac{\sum_{j:j\neq i} b_j}{n} \right).
\]
Combining this with the definition of $g(n)$ and $\sum_{\ell=0}^{d-1}(1+o(1))(\log n)^\ell=(1+o(1))(\log n)^{d-1}$,
one can further simplify the above bound as follows:
\begin{align*}
\mathbb{P}[{\rm Bin}(\prod_{j:j\neq i}(n-b_j),p_-)\leq d-1]
&\leq (1+o(1))(\log n)^{d-1} e^{-g(n)} e^{\frac{g(n)\sum_{j:j\neq i}b_j}{n}} \\
&=(1+o(1))\frac{\log\log n}{n}e^{\frac{g(n)\sum_{j:j\neq i}b_j}{n}}.
\end{align*}
Combining this with $\binom{n}{b_i}\leq \left(\frac{en}{b_i}\right)^{b_i}$, we further obtain
\begin{equation}
\binom{n}{b_i} \left(\mathbb{P}[{\rm Bin}(\prod_{j:j\neq i}(n-b_j),p_-)\leq d-1]\right)^{b_i}
\leq \left( 
\frac{(e+o(1))\log\log n}{b_i} e^{\frac{g(n)\sum_{j:j\neq i}b_j}{n}} 
\right)^{b_i}.
\end{equation}
We next use $\sum_{1\leq i \leq k} (k-1)b_i^2\geq \sum_{1\leq i<j\leq k} 2b_ib_j$ to obtain
\[
\prod_{i=1}^k \left(e^{\frac{g(n)}{n}\sum_{j:j\neq i}b_j}\right)^{b_i}
= e^{\frac{g(n)}{n}\left(\sum_{1\leq i<j \leq k}2b_ib_j\right)}
\leq e^{\frac{g(n)}{n}\left(\sum_{1\leq i \leq k}(k-1)b_i^2\right)}
=\prod_{i=1}^k e^{\frac{(k-1)g(n)}{n}b_i}.
\]
Hence, we get
\begin{equation}\label{eq:prob3}
\prod_{i=1}^k \binom{n}{b_i} \left(\mathbb{P}[{\rm Bin}(\prod_{j:j\neq i}(n-b_j),p_-)\leq d-1]\right)^{b_i}
\leq \prod_{i=1}^k\left( 
\frac{(e+o(1))\log\log n}{b_i} e^{\frac{(k-1)g(n)}{n}b_i} 
\right)^{b_i}.
\end{equation}
Let us define a function $h:\mathbb{R}\rightarrow \mathbb{R}$ in $b$ by
\[
h(b)=\frac{e\log\log n}{b} e^{\frac{(k-1)g(n)}{n}b}. 
\]
By substituting (\ref{eq:prob3}) into (\ref{eq:prob}), in order to complete the proof, it suffices to show that the following term goes to zero when $n\rightarrow \infty$:
\begin{equation}\label{eq:prob4}
\begin{split}
&\sum_{b_1=1}^{\frac{n}{k}} \sum_{b_2=1}^{\frac{n}{k}} \dots \sum_{b_k=1}^{\frac{n}{k}} \left(\prod_{i=1}^k
h(b_i)^{b_i} \cdot \min\{1,b_1b_2n^{k-2}p_+\} \right) \\
&=\prod_{i=3}^k 
\left(\sum_{b_i=1}^{\frac{n}{k}} 
h(b_i)^{b_i} \right)
 \cdot 
\left( \sum_{b_1=1}^{\frac{n}{k}}\sum_{b_2=1}^{\frac{n}{k}} \left(
h(b_1)^{b_1} h(b_2)^{b_2} \cdot \min\{1,b_1b_2n^{k-2}p_+\} \right)\right).
\end{split}
\end{equation}

We next give the bound of $h(b_i)$.
\begin{claim}\label{claim:prob}
For each $i\in \{1,\dots, k\}$,
\begin{align*}
\sum_{b_i=(\log\log n)^2}^{\frac{n}{k}}
h(b_i)^{b_i} 
&\leq \left((1+o(1))\frac{e^2}{\log\log n}\right)^{(\log\log n)^2}
\end{align*}
and
\begin{align*}
\sum_{b_i=1}^{\frac{n}{k}} h(b_i)^{b_i}&
\leq ((e+o(1))\log\log n)^{e^3\log \log n}.
\end{align*}
\end{claim}
\begin{proof}
The function $b\mapsto \frac{e^{-cb}}{b}$ with a positive constant $c$ is convex over $b>0$, and hence the maximum value of $h(b)$ over an interval in the positive orthant is attained at an endpoint of the interval. 
This fact implies that the maximum value of $h(b_i)$ over $b_i\in \{1, 2, \dots, \frac{n}{k}\}$ is attained at $b_i=1$ or $b_i=\frac{n}{k}$.
Using $g(n)=(1+o(1))\log n$, a direct computation shows that 
$h(1)\geq \log \log n$ and 
$h(\frac{n}{k})=n^{-\frac{1}{k}}{\rm polylog}(n)$ if $n$ is sufficiently large,
so for asymptotic analysis we may suppose that the maximum is attained at $b_i=1$.
Hence, by using $g(n)=(1+o(1))\log n$ again,
\begin{equation}\label{eq:prod5}
h(b_i) \leq e \log\log n.
\end{equation}
By the same argument, the function $h(b)$ attains the maximum at $b=(\log\log n)^2$ over the interval $[(\log\log n)^2, \frac{n}{k}]$ if $n$ is sufficiently large.
Hence, we obtain
\begin{equation}\label{eq:prod6}
\begin{split}
\sum_{b_i=(\log\log n)^2}^{\frac{n}{k}}
h(b_i)^{b_i} 
&=
\sum_{b_i=(\log\log n)^2}^{\frac{n}{k}}
\left(\frac{e\log\log n}{b_i} e^{\frac{(k-1)g(n)}{n}b_i}\right)^{b_i} \\
&\leq 
\sum_{b_i=(\log\log n)^2}^{\frac{n}{k}} \left(\frac{e^2}{\log \log n}\right)^{b_i}
=\left((1+o(1))\frac{e^2}{\log\log n}\right)^{(\log\log n)^2},
\end{split}
\end{equation}
implying the first inequality in the statement.

To see the second inequality in the statement, we use the same trick to see
\begin{equation*}
\sum_{b_i=e^3 \log\log n }^{\frac{n}{k}}
h(b_i)^{b_i} 
=
\sum_{b_i=e^3 \log\log n}^{\frac{n}{k}}
\left(\frac{e\log\log n}{b_i} e^{\frac{(k-1)g(n)}{n}b_i}\right)^{b_i}
\leq 
\sum_{b_i=e^3 \log\log n}^{\frac{n}{k}} \left(\frac{1}{e}\right)^{b_i}
\leq 2.
\end{equation*}
Combining this with (\ref{eq:prod5}), we obtain
\begin{equation}\label{eq:prod7}
\sum_{b_i=1}^{\frac{n}{k}} h(b_i)^{b_i}=
\sum_{b_i=1}^{e^3\log\log n-1} h(b_i)^{b_i}
+
\sum_{b_i=e^3\log \log n}^{\frac{n}{k}} h(b_i)^{b_i}
\leq ((e+o(1)) \log\log n)^{e^3\log \log n}
\end{equation}
for each $i$.
\end{proof}

We next evaluate the last term of (\ref{eq:prob4}).
\begin{claim}\label{claim:prob2}
\begin{align*}
&\sum_{b_1=1}^{\frac{n}{k}}\sum_{b_2=1}^{\frac{n}{k}} 
\left(h(b_1)^{b_1} h(b_2)^{b_2} \cdot \min\{1,b_1b_2n^{k-2} p_+\} \right) \\
&\leq (1+o(1))\left(n^{-1}{\rm polylog}(n) + 
2\frac{(e\log \log n)^{e^3\log \log n}e^{2(\log\log n)^2}}{(\log \log n)^{(\log\log n)^2}}\right)
\end{align*}
\end{claim}
\begin{proof}
Recall that $p_+=\frac{ \log n+(d-1)\log \log n+\log \log \log n}{n^{k-1}}$.
Hence, if $b_1, b_2\leq (\log\log n)^2$, 
\begin{equation}\label{eq:prob8}
b_1b_2n^{k-2}p_+\leq \frac{(1+o(1))\log n}{n} \cdot (\log \log n)^4.
\end{equation}
Hence, by (\ref{eq:prob8}) and Claim~\ref{claim:prob},
\begin{align*}
&\sum_{b_1=1}^{\frac{n}{k}}\sum_{b_2=1}^{\frac{n}{k}} 
\left(h(b_1)^{b_1} h(b_2)^{b_2} \cdot \min\{1,b_1b_2n^{k-2} p_+\} \right)\\
&\leq \sum_{b_1=1}^{(\log \log n)^2}\sum_{b_2=1}^{(\log \log n)^2}
\left(h(b_1)^{b_1} h(b_2)^{b_2} \cdot b_1b_2n^{k-2} p_+ \right) \\
 &+\sum_{b_1=(\log \log n)^2}^{\frac{n}{k}}\sum_{b_2=1}^{\frac{n}{k}} 
 h(b_1)^{b_1} h(b_2)^{b_2}
 +\sum_{b_1=1}^{\frac{n}{k}}\sum_{b_2=(\log \log n)^2}^{\frac{n}{k}} 
 h(b_1)^{b_1} h(b_2)^{b_2} \\
& \leq \left( ((e^2+o(1))\log\log n)^{2(\log \log n)^2} \cdot \frac{\log n (\log \log n)^4}{n} + 2(1+o(1)) \frac{(e\log \log n)^{e^3\log \log n}e^{2(\log\log n)^2}}{(\log \log n)^{(\log \log n)^2}} \right) \\
&\leq (1+o(1))\left(n^{-1}{\rm polylog}(n) + 
2\frac{(e\log \log n)^{e^3\log \log n}e^{2(\log\log n)^2}}{(\log \log n)^{(\log\log n)^2}}\right)
\end{align*}
as required.
\end{proof}

We are now ready to complete the proof of Claim~\ref{claim:extension1}.
Our goal is to prove that (\ref{eq:prob}) converges to zero if $n\rightarrow \infty$.
By (\ref{eq:prob3}), (\ref{eq:prob}) is bounded by (\ref{eq:prob4}).
By Claims~\ref{claim:prob} and \ref{claim:prob2}, 
(\ref{eq:prob4}) is bounded by 
\begin{align*}
(1+o(1))(e^2\log\log n)^{(k-2)e^3\log \log n}\cdot 
\left(n^{-1}{\rm polylog}(n) + 
2\frac{(e^2\log \log n)^{e^3\log \log n}e^{2(\log\log n)^2}}{(\log \log n)^{(\log\log n)^2}}\right)
\end{align*}
This converges to zero when $n\rightarrow\infty$,
and the proof of Claim~\ref{claim:extension1} is completed.
\end{proof}

By the symmetry of $V_1, V_2, \dots, V_k$ in $K_n^k$ and the union bound, Claim~\ref{claim:extension1} implies that the probability that there is a n.p.l.~vertex set $B$ such that 
\begin{itemize}
    \item[(i)] $B$ is not $d$-extendable in $G_-$, and
    \item[(ii')] $G_+$ contains an edge $e$ with $e\cap B\cap V_i\neq \emptyset \neq e\cap B\cap V_j$ for some pair of distinct $i,j$ with $1\leq i,j\leq k$
\end{itemize}
tends to zero if $n\rightarrow \infty$.

Now, suppose that $G_d$ has an n.p.l.~vertex set $B$ which is 
non $d$-extendable in $G_d$. 
By $G_-\subset G_d$, $B$ is not $d$-extendable in $G_-$.
Pick a vertex $v\in B\cap V_1$. Then $v$ is not $d$-extendable in $G_d$.
Since $v$ has a degree at least $d$ in $G_d$, $G_d$ has at least one edge $e$ that contains $v$ and other vertices in $B$.
It follows from $G_d\subseteq G_+$ that $G_+$ contains an edge $e$ with 
$e\cap B\cap V_i\neq \emptyset \neq e\cap B\cap V_j$ for some distinct $i,j$.
Consequently, if $G_d$ has a non-$d$-extendable n.p.l.~set $B$, then 
$B$ satisfies (i) and (ii').
The latter probability tends to zero, and hence so does the former.
\end{proof}

\section{Rigidity Threshold}\label{sec:main}
In this section we prove the main result of this paper: probability threshold for the rigidity of random $k$-graphs.
As a warm-up, we shall first discuss on local rigidity and then move to global rigidity.

Throughout this section, we consider the case
when $\bn=(n,\dots, n)$.

\subsection{Threshold for local rigidity}
Consider the generic rigidity matroid ${\cal T}_{n,d}$ defined in Section~\ref{sec:pre}, which is the matroid on $E(K_n^k)$ defined 
by the row independence of $J\sigma^d_{\bn}(\bp)$ at a generic $d$-dimensional point configuration $\bp$.
As we noted in Section~\ref{sec:pre}, $G\subset K_{n}^k$ is locally rigid in $\mathbb{F}^d$ if and only if $E(G)$ has rank $dkn-d(k-1)$.
(Note that $N=kn$ throughout this section since $\bn=(n,\dots,n)$.)
So, in order to show that the random graph $G(n,M_d)$ is locally rigid in $\mathbb{F}^d$, it suffices to show that 
$G(n,M_d)$ has rank $dkn-d(k-1)$ in ${\cal T}_{n,d}$.
The latter claim can be shown by applying Theorem~\ref{thm:random}.
To see this, we need to check the local rigidity of $k$-partite $d$-trees. 

\begin{lemma}\label{lem:local_rigidity}
Let $d,k$ be positive integers with $k\geq 3$.
Any $k$-partite $d$-tree is locally rigid in $\mathbb{F}^d$.
\end{lemma}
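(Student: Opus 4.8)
The plan is to prove this by induction on the number of vertices, using the fact that local rigidity in $\mathbb{F}^d$ is equivalent to the rank of $Jf_G^d(\bp)$ being $dN - d(k-1)$ at a generic point configuration $\bp$ (Proposition~\ref{prop:infinitesimal}). The base case is $G = K_d^k$, the complete $k$-partite $k$-graph with $d$ vertices on each side; here $N = dk$, so we must show $\rank Jf_{K_d^k}^d(\bp) = d^2k - d(k-1)$, i.e. the map $f_{K_d^k}^d$ is a submersion onto its image modulo the $d(k-1)$-dimensional stabilizer orbit. For the inductive step, suppose $G$ is obtained from a $k$-partite $d$-tree $G'$ on $N-1$ vertices by a degree-$d$ extension adding a vertex $v$ (say $v \in V_1$) together with $d$ hyperedges $e_1,\dots,e_d$, each containing $v$ and $k-1$ vertices of $G'$. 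By the induction hypothesis $(G',\bp')$ is locally rigid, where $\bp'$ is $\bp$ restricted to $V(G')$.

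The key structural observation is that $Jf_G^d(\bp)$ has a block form: the rows indexed by $E(G')$ together with the columns indexed by $V(G')$ form exactly $Jf_{G'}^d(\bp')$ (since none of the old edges touch $v$), and the new rows $e_1,\dots,e_d$ are supported on the columns for $v$ and for the vertices of $G'$ appearing in the $e_i$. So I would argue: take $\bq$ in the fiber of $f_G^d$ over $f_G^d(\bp)$ near $\bp$; its restriction $\bq'$ to $V(G')$ lies in the fiber of $f_{G'}^d$ over $f_{G'}^d(\bp')$, hence by local rigidity $\bq'$ is congruent to $\bp'$ by a stabilizer $(A_1,\dots,A_k)$; applying $(A_1^{-1},\dots,A_k^{-1})$ we may assume $\bq' = \bp'$, and then the $d$ equations coming from $e_1,\dots,e_d$ determine $\bq(v)$. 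Concretely, writing $q_v$ for the unknown and $q_{v,j}$ for its $j$-th coordinate, the system $\sum_{j=1}^d q_{v,j}\prod_{w\in e_i\setminus\{v\}} p_{w,j} = \sum_{j=1}^d p_{v,j}\prod_{w\in e_i\setminus\{v\}} p_{w,j}$ for $i=1,\dots,d$ is linear in $(q_{v,1},\dots,q_{v,d})$ with coefficient matrix $\bigl(\prod_{w\in e_i\setminus\{v\}} p_{w,j}\bigr)_{i,j}$; genericity of $\bp$ makes this matrix nonsingular (its determinant is a nonzero polynomial in the $p_{w,j}$), so $q_v = p_v$. This shows $(G,\bp)$ is locally rigid. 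Equivalently, in rank terms: $\rank Jf_G^d(\bp) \geq \rank Jf_{G'}^d(\bp') + d = (d(N-1) - d(k-1)) + d = dN - d(k-1)$, and the reverse inequality is automatic.

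The main obstacle is the base case $K_d^k$. One should not expect this to be completely routine, since it is exactly the statement that the $d$-secant of the Segre variety $\overline{\mathrm{im}\,\sigma_{(d,\dots,d)}}$ has the expected dimension $d^2k - d(k-1)$ — i.e. that the Segre variety of format $d\times\dots\times d$ ($k$ factors) is not $d$-defective. For $k\geq 3$ this is a known non-defectivity result (it is the generic identifiability/dimension statement for $d\times\dots\times d$ tensors of rank $d$; see e.g. the Abo--Ottaviani--Peterson classification of defective Segre varieties, which shows the $k$-factor case with all factors equal to $d$ is non-defective once $k\geq 3$). So I would either cite this directly or give a short self-contained argument: exhibit one configuration $\bp$ (e.g. coordinate-type vectors making the decomposition the "diagonal" tensor $\sum_j e_j\otimes\dots\otimes e_j$) at which the Jacobian rank can be computed by hand, and invoke that the rank is lower-semicontinuous so the generic rank is at least this value; combined with the trivial upper bound $dN - d(k-1)$ this pins it down. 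The hypothesis $k \geq 3$ enters precisely here — for $k=2$ (matrices) the analogue fails, as rank-$d$ $d\times d$ matrices form a hypersurface and the count is off by exactly the defect, which is why the lemma is stated for $k\geq 3$.
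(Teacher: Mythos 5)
Your proof follows essentially the same route as the paper's: induction along the degree-$d$ extension construction, with the base case $K_d^k$ reduced to the known non-defectivity of the balanced Segre variety for $k\geq 3$ (the paper cites \cite{abo2009,bernardi} for this and \cite[Corollary 6.4]{cruickshank2023identifiability} for the extension step, which you instead prove directly). The one assertion you leave unjustified is that the $d\times d$ coefficient matrix $\bigl(\prod_{w\in e_i\setminus\{v\}}p_{w,j}\bigr)_{i,j}$ has determinant not identically zero; this is true but deserves a line --- its rows are $d$ generic points of the cone over the $(k-1)$-factor Segre variety projected to $d$ distinct coordinates, and since that variety is linearly nondegenerate, $d$ distinct coordinate functionals are independent on its span, so the projections of $d$ generic points are linearly independent.
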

\begin{proof}
Recall that any $k$-partite $d$-tree is constructed from $K_d^k$ by a sequence of degree-$d$ extensions.
Hence we need to show 
(1) $K_d^k$ is locally rigid in $\mathbb{F}^d$ 
and (2) any degree-$d$ extension preserves local rigidity in $\mathbb{F}^d$.

(1) follows from a known fact.
Indeed, it is known (and can be easily shown by an elementary linear algebraic argument) 
that the dimension of the $d$-secant of the affine cone of the Segre variety $\overline{{\rm im} \ \sigma_{\bn}}$ is 
$dkn-d(k-1)$ if $\bn=(n,\dots, n)$ and $n\geq d$.
(This is not a sharp result. See, e.g.~\cite{abo2009,bernardi} for stronger results.)
Since this dimension is equal to 
the rank of $Jf_{K_{\bn}^k}(\bp)$ at a generic $\bp$, 
the local rigidity of $K_d^k$ follows from Proposition~\ref{prop:infinitesimal}.

(2) is also a known fact.
Indeed, in \cite[Corollary 6.4]{cruickshank2023identifiability}, it has been show that, if $G$ is a locally rigid $k$-graph in $\mathbb{F}^d$ and $H$ is obtained from $G$ by a degree-$d$ extension, then $H$ is locally rigid in $\mathbb{F}^d$.
\end{proof}

We are now ready to solve the local rigidity problem.

\begin{theorem}\label{thm:local_rigidity}
Let $d,k$ be positive integers with $k\geq 3$.
Then, a.a.s.~$G(n,M_d)\subset K_n^k$ is locally rigid in $\mathbb{F}^d$.
\end{theorem}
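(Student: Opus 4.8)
The plan is to combine Proposition~\ref{prop:infinitesimal}, Theorem~\ref{thm:random}, and Lemma~\ref{lem:local_rigidity}. Recall from Section~\ref{sec:pre} that $G\subseteq K_n^k$ is locally rigid in $\mathbb{F}^d$ if and only if $E(G)$ has rank $dkn-d(k-1)$ in the generic tensor rigidity matroid ${\cal T}_{n,d}$ (here $N=kn$ since $\bn=(n,\dots,n)$). So it suffices to prove that a.a.s.\ $\rank_{{\cal T}_{n,d}}(E(G(n,M_d)))=dkn-d(k-1)$.

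First I would apply Theorem~\ref{thm:random} with $P_n={\cal T}_{n,d}$. This is legitimate: ${\cal T}_{n,d}$ is a matroid, hence in particular a polymatroid, on $E(K_n^k)$, and its rank equals $\dim\overline{{\rm im}\ \sigma^d_{\bn}}\le dkn-d(k-1)\le (dk)n$, so we may take the constant $a=dk$. Theorem~\ref{thm:random} then gives that, a.a.s., the hypergraph whose edge set is $\cl_{{\cal T}_{n,d}}(E(G(n,M_d)))$ contains a spanning $k$-partite $d$-tree $T^*$. Since $T^*$ is spanning it has $kn$ vertices, and for $n\ge d$ it is built from $K_d^k$ by degree-$d$ extensions; by Lemma~\ref{lem:local_rigidity} (using $k\ge 3$) it is locally rigid in $\mathbb{F}^d$, i.e.\ $\rank_{{\cal T}_{n,d}}(E(T^*))=dkn-d(k-1)$.

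Finally I would use that the closure operator of a matroid preserves rank, so $\rank_{{\cal T}_{n,d}}(\cl_{{\cal T}_{n,d}}(E(G(n,M_d))))=\rank_{{\cal T}_{n,d}}(E(G(n,M_d)))$. Since $E(T^*)\subseteq\cl_{{\cal T}_{n,d}}(E(G(n,M_d)))$ and rank is monotone, this yields $\rank_{{\cal T}_{n,d}}(E(G(n,M_d)))\ge dkn-d(k-1)$ a.a.s.; the reverse inequality always holds (the rank of $Jf^d_G(\bp)$ is at most $dN-d(k-1)$). Hence equality holds a.a.s., and Proposition~\ref{prop:infinitesimal} gives the claim. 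There is essentially no remaining obstacle at this stage — all the real work has been carried out in Theorem~\ref{thm:random} and Lemma~\ref{lem:local_rigidity} — and the only points needing a little care are that the closure in Theorem~\ref{thm:random} is taken inside the matroid ${\cal T}_{n,d}$ (so that rank is preserved under closure) and that $n\ge d$ for all sufficiently large $n$, so that the $K_d^k$ base case of Lemma~\ref{lem:local_rigidity} applies.
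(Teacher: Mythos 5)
Your argument is correct and is essentially identical to the paper's own proof: apply Theorem~\ref{thm:random} to the generic rigidity matroid ${\cal T}_{n,d}$, obtain a spanning $k$-partite $d$-tree in the closure of $G(n,M_d)$, and invoke Lemma~\ref{lem:local_rigidity} together with the rank characterization of local rigidity. The extra details you supply (the bound $a=dk$, closure preserving rank, and $n\ge d$) are exactly the routine checks the paper leaves implicit.
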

\begin{proof}
We apply Theorem~\ref{thm:random}
to the generic rigidity matroid ${\cal T}_{n,d}$ on $E(K_n^k)$.
Theorem~\ref{thm:random} implies that
the graph of the closure of $G(n,M_d)$ in ${\cal T}_{n,d}$
contains a spanning $k$-partite $d$-tree with high probability if $n$ is sufficiently large.
By Lemma~\ref{lem:local_rigidity},
the spanning $k$-partite $d$-tree is locally rigid in $\mathbb{F}^d$. 
So the rank of the edge set of $G(n,M_d)$ in ${\cal T}_{n,d}$ is $dkn-d(k-1)$,
which means that $G(n,M_d)$ is locally rigid in $\mathbb{F}^d$.
\end{proof}

Note that any locally rigid $k$-graph $G$ in $\mathbb{F}^d$ has minimum degree at least $d$.
Hence the statement in Theorem~\ref{thm:local_rigidity} is best possible.
By Proposition~\ref{prop:min_degree}, 
Theorem~\ref{thm:local_rigidity} also implies that 
$p=\frac{\log n+(d-1)\log \log n+o(\log \log n)}{n^{k-1}}$ is the probability threshold for the local rigidity of $G(n,p)$ in $\mathbb{F}^d$.

\subsection{Threshold for 1-dimensional global rigidity}
Next, we move to establishing the global rigidity threshold.
We first solve the 1-dimensional case in this subsection.

Our idea is to verify the condition of Theorem~\ref{thm:1d_global_real} for $\mathbb{F}=\mathbb{R}$
and that of Theorem~\ref{thm:1d_global_complex} for $\mathbb{F}=\mathbb{C}$ in the random graph $G(n,M_1)$.
As in the last subsection, this will be done by applying the observations in Section~\ref{sec:random}.
We first remark on the following easy observation.
\begin{prop}\label{prop:1d_extension}
Over any field $\mathbb{F}$, 
any $k$-partite $1$-tree $G$ with $N$ vertices satisfies 
$\rank_{\mathbb{F}} I_G=N-(k-1)$.
\end{prop}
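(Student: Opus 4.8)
The plan is a straightforward induction on the number of degree-$1$ extensions used to build $G$ from $K_1^k$, tracking how $\rank_{\mathbb{F}} I_G$ changes under a single extension. For the base case $G=K_1^k$, the graph has $N=k$ vertices and exactly one hyperedge (the unique transversal), so $I_G$ is the $k\times 1$ all-ones column and $\rank_{\mathbb{F}} I_G = 1 = N-(k-1)$, as required.

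For the inductive step, suppose $H$ is obtained from a $k$-partite $1$-tree $G$ on $N$ vertices by adjoining a new vertex $v$ together with a single new hyperedge $f=\{v\}\cup S$, where $S$ is a transversal of the $k-1$ vertex classes other than the one containing $v$. Ordering the rows and columns of $I_H$ so that $v$ and $f$ come last, I would observe that $I_H$ has the block form
\[
I_H=\begin{pmatrix} I_G & \chi_S \\ \mathbf 0 & 1 \end{pmatrix},
\]
where $\chi_S$ denotes the indicator column of $S$ and the zero row in the bottom-left reflects that $v$ lies in no hyperedge of $G$. Appending a zero row to $I_G$ leaves the rank unchanged, while the new last column has a $1$ in the coordinate indexed by $v$, a coordinate in which every other column vanishes; hence appending that column raises the rank by exactly one. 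Thus $\rank_{\mathbb{F}} I_H = \rank_{\mathbb{F}} I_G + 1$, and together with the inductive hypothesis $\rank_{\mathbb{F}} I_G = N-(k-1)$ this gives $\rank_{\mathbb{F}} I_H = (N+1)-(k-1)$, closing the induction.

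I do not expect a genuine obstacle here. The two points needing a line of justification are that the displayed block decomposition of $I_H$ is correct — which uses only that a degree-$1$ extension introduces exactly one new vertex and one new hyperedge, and that $k$-partiteness forces that hyperedge to be a transversal — and the two elementary rank facts (a zero row is free; a column whose support meets a coordinate missed by all other columns is rank-increasing), both valid over an arbitrary field. As a sanity check, the conclusion is consistent with the general inequality $\dim\ker I_G^{\top}\ge k-1$ from~(\ref{eq:IG_lower}); the content of the proposition is precisely that this bound is attained for $1$-trees.
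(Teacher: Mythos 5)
Your proof is correct and follows essentially the same route as the paper: induction from the base case $K_1^k$ together with the block decomposition $I_H=\begin{pmatrix} I_G & \ast \\ \mathbf 0 & 1\end{pmatrix}$ showing that each degree-one extension increases the rank by exactly one. The paper states this more tersely ("can be checked trivially"), while you spell out the two elementary rank facts, but the argument is the same.
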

\begin{proof}
We first claim that the degree-one extension preserves the dimension of the kernel of the incidence matrix, i.e.,
if $H$ is obtained from $G$ by a degree-one extension, then 
$\rank_{\mathbb{F}} I_H=\rank_{\mathbb{F}} I_G+1$.
This fact can be checked trivially 
 by observing that 
$I_H$ is written as 
$I_H=\begin{pmatrix} I_G & \ast \\ {\bf 0} & 1\end{pmatrix}$
if $H$ is obtained from $G$ by a degree-one extension.

Now suppose $G$ is a $k$-partite $1$-tree.
Then $G$ is constructed from $K_1^k$ by a sequence of degree-one extensions. 
When $G=K_1^k$, $G$ has only one edge
and hence $\rank_{\mathbb{F}} I_G=1=N-(k-1)$ as required.
When $G\neq K_1^k$, the claim follows by applying
degree-one extensions inductively. 
\end{proof}

\begin{theorem}\label{thm:1d_global}
Let $k$ be a positive integer with $k\geq 3$
and $\mathbb{F}\in \{\mathbb{R},\mathbb{C}\}$.
Then, a.a.s.~$G(n,M_1)\subset K_n^k$ is globally rigid in $\mathbb{F}^1$.
\end{theorem}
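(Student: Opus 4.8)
The plan is to reduce Theorem~\ref{thm:1d_global} to the rank conditions of Theorems~\ref{thm:1d_global_real} and~\ref{thm:1d_global_complex} and then establish these rank conditions for $G(n,M_1)$ via Theorem~\ref{thm:random}. The unifying observation is that the quantities we need to control are the ranks $\rank_{\mathbb{F}} I_G$ and $\rank_{GF(q)} I_G$ over various fields $\mathbb{F}$, and that each of these is the rank function of a (linear) matroid on $E(K_n^k)$. Since $M_1$ is the minimum number of edges forcing minimum degree $\geq 1$, applying Theorem~\ref{thm:random} with $d=1$ to the matroid ${\cal I}^{\mathbb{F}}_n$ defined by row-independence of $I_G^{\top}$ over $\mathbb{F}$ (which has rank at most $kn-(k-1)\leq kn$, so we may take $a=k$) yields that a.a.s.\ the closure of $G(n,M_1)$ in ${\cal I}^{\mathbb{F}}_n$ contains a spanning $k$-partite $1$-tree $T$. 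By Proposition~\ref{prop:1d_extension}, $T$ satisfies $\rank_{\mathbb{F}} I_T=N-(k-1)$, and since $T$ is in the closure of $G(n,M_1)$ we get $\rank_{\mathbb{F}} I_{G(n,M_1)}=\rank_{\mathbb{F}} I_T=N-(k-1)$.

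For the real case this is immediate: apply the above with $\mathbb{F}=\mathbb{R}$ and $\mathbb{F}=GF(2)$; since Theorem~\ref{thm:random} holds a.a.s.\ for each fixed field, the intersection of two a.a.s.\ events is a.a.s., so a.a.s.\ $\rank_{\mathbb{R}} I_{G(n,M_1)}=\rank_{GF(2)} I_{G(n,M_1)}=N-(k-1)$, and Theorem~\ref{thm:1d_global_real} gives global rigidity in $\mathbb{R}^1$. For the complex case, Theorem~\ref{thm:1d_global_complex} requires $\rank_{GF(q)} I_{G(n,M_1)}=N-(k-1)$ \emph{simultaneously} for all primes $q\leq k^{N/2}=k^{kn/2}$, a number of primes that grows with $n$, so a naive union bound over Theorem~\ref{thm:random} applied to each $GF(q)$ is not available. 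This is the main obstacle. To handle it I would use the quantitative success probability built into Lemmas~\ref{lem:closure_size} and~\ref{lem:clique}: the probability that the closure of $G(n,cn)$ fails to contain a $k$-partite $1$-tree of size $(1-\gamma)n$ per side is at most $\left(\tfrac{1}{2k}\right)^{kn}$. Combined with Lemma~\ref{lem:extension} (a single a.a.s.\ event, not depending on the field) to extend this $1$-tree to a spanning one using edges of $G(n,M_1)$, we get that for each fixed prime $q$ the event ``$\rank_{GF(q)} I_{G(n,M_1)}<N-(k-1)$'' has probability at most $\left(\tfrac{1}{2k}\right)^{kn}+o(1)$; more carefully, after conditioning on the (fixed, field-independent) extension event of Lemma~\ref{lem:extension}, the residual failure probability is at most $\left(\tfrac{1}{2k}\right)^{kn}$. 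The number of primes $q\leq k^{kn/2}$ is at most $k^{kn/2}$, so the union bound over all such $q$ contributes at most $k^{kn/2}\cdot\left(\tfrac{1}{2k}\right)^{kn}=\left(\tfrac{k^{1/2}}{2k}\right)^{kn}=\left(\tfrac{1}{2k^{1/2}}\right)^{kn}\to 0$.

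Thus a.a.s.\ $\rank_{\mathbb{R}} I_{G(n,M_1)}=\rank_{GF(q)} I_{G(n,M_1)}=N-(k-1)$ for every prime $q\leq k^{kn/2}$, and Theorem~\ref{thm:1d_global_complex} gives global rigidity in $\mathbb{C}^1$. I would organize the write-up so that the field-independent part (Lemma~\ref{lem:extension} plus the extension-preserves-rank step) is isolated once, and only the closure/clique step (Lemmas~\ref{lem:closure_size}, \ref{lem:clique}) is invoked with its exponential bound for each field; the one subtlety to check carefully is that Lemma~\ref{lem:clique}'s bound $\left(\tfrac{1}{2k}\right)^{kn}$ and Theorem~\ref{thm:random}'s deduction really do go through for the linear matroid ${\cal I}^{GF(q)}_n$ with the \emph{same} constant $c$ uniformly in $q$ — which they do, since $c$ in Lemma~\ref{lem:closure_size} depends only on $a=k$, $\delta$, $k$, $d=1$ and not on the underlying field, and Lemma~\ref{lem:extension} does not reference $P_n$ at all.
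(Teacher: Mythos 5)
Your proposal is correct and follows essentially the same route as the paper: reduce to the rank conditions of Theorems~\ref{thm:1d_global_real} and~\ref{thm:1d_global_complex} via Proposition~\ref{prop:1d_extension}, and for the complex case handle the growing family of primes $q\leq k^{kn/2}$ by a union bound using the quantitative failure probability $\left(\tfrac{1}{2k}\right)^{kn}$ of Lemma~\ref{lem:clique} together with the single field-independent extension event of Lemma~\ref{lem:extension}. This is exactly the paper's argument, including the observation that the constant $c$ is independent of $q$.
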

\begin{proof}
We first solve the case when $\mathbb{F}=\mathbb{R}$.
By Proposition~\ref{prop:1d_extension} and Theorem~\ref{thm:random}, it holds
a.a.s.~that $\rank_{\mathbb{R}} I_{G(n,M_1)}=\rank_{GF(2)} I_{G(n,M_1)}=kn-(k-1)$.
Then the 1-dimensional global rigidity of $G(n,M_1)$ over $\mathbb{R}$ follows from Theorem~\ref{thm:1d_global_real}.

We next consider the case when $\mathbb{F}=\mathbb{C}$.
Let $q$ be a prime 
and let ${\cal I}_q$ be the linear matroid represented by the incidence matrix $I_{K_n^k}$ over $GF(q)$. Since each column of $I_{K_n^k}$ is indexed by $E(K_n^k)$, we may regard ${\cal I}_q$ as a matroid on the ground set $E(K_n^k)$. 
Let ${\rm cl}_{q}(H)$ be the graph of the closure of $H$
in ${\cal I}_q$ for any $H\subseteq K_n^k$.
By Lemma~\ref{lem:clique} applied to ${\cal I}_q$,
there is a constant $c$ (independent of $n$ and $q$) such that,
with probability at least $1-\left( \frac{1}{2k}\right)^{kn}$,
${\rm cl}_{q}(G(n,cn))$ contains a $k$-partite $1$-tree whose vertex set has size at least $(1-\frac{1}{k})n$ on each side.

In order to apply Theorem~\ref{thm:1d_global_complex}, we need to handle ${\rm cl}_{q}(G(n,cn))$ for all prime $q$ satisfying 
$q\leq k^{kn/2}$ simultaneously.
This can be done by union bound.
Indeed, by union bound, with probability at least $1-\left( \frac{1}{2}\right)^{kn}$,
${\rm cl}_{q}(G(n,cn))$ contains a $k$-partite $1$-tree $T_q$ whose vertex set has size at least $(1-\frac{1}{k})n$ for every prime $q$ satisfying $q\leq k^{kn/2}$.
Then, Lemma~\ref{lem:extension} says that a.a.s.~$G(n,M_1)$ has the property that every vertex set $B$ with $|B\cap V_i|\leq \frac{n}{k}$ is $1$-extendable for all $i=1,\dots, k$.
Note that $M_1\gg cn$ if $n$ is sufficiently large by Proposition~\ref{prop:min_degree}.
Hence, if $n$ is sufficiently large, 
$T_q$ can be extended to a spanning $k$-partite $1$-tree for each prime $q$.
By Proposition~\ref{prop:1d_extension}, 
this implies that
a.a.s.~$\rank_{\mathbb{R}} I_{G(n,M_1)}=\rank_{GF(q)} I_{G(n,M_1)}=kn-(k-1)$ 
for any prime $q$ satisfying $q\leq k^{kn/2}$.
We can now apply Theorem~\ref{thm:1d_global_complex} to conclude that
a.a.s.~$G(n,M_1)$ is globally rigid in $\mathbb{C}^1$.
\end{proof}

The following statement was implicit in the last proof.
Since incidence matrices are fundamental objects in combinatorics, we shall state it as an independent theorem.
\begin{theorem}\label{thm:incidence}
Let $k$ be a positive integer with $k\geq 3$
and $\mathbb{F}$ be any field.
Then, a.a.s.~$G(n,M_1)\subset K_n^k$ satisfies
$\rank_{\mathbb{F}} I_{G(n,M_1)}=kn-(k-1)$
\end{theorem}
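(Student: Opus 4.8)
The plan is to extract the argument that, in the proof of Theorem~\ref{thm:1d_global}, was carried out over the prime fields $GF(q)$, and to run it over an arbitrary field $\mathbb{F}$: an easy upper bound on $\rank_{\mathbb{F}} I_{G(n,M_1)}$ coming from the trivial kernel, and a matching lower bound obtained by applying Theorem~\ref{thm:random} to the linear matroid represented over $\mathbb{F}$ by the incidence matrix $I_{K_n^k}$. Nothing in this will use a property of $\mathbb{F}$ beyond what Proposition~\ref{prop:1d_extension} grants for arbitrary fields.

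First I would record the upper bound: for every $k$-partite spanning subgraph $G$ of $K_n^k$, inequality~(\ref{eq:IG_lower}) together with the trivial kernel vectors $\bx_2,\dots,\bx_k$ of~(\ref{eq:1d_trivial}) gives $\dim\ker I_G^{\top}\ge k-1$, hence $\rank_{\mathbb{F}} I_{G(n,M_1)}\le kn-(k-1)$ with probability one (and this holds regardless of which threshold is used).

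For the lower bound, let ${\cal I}_{\mathbb{F}}$ be the matroid on ground set $E(K_n^k)$ represented over $\mathbb{F}$ by $I_{K_n^k}$, i.e., the linear polymatroid assigning to each edge $e$ the line spanned by its column of $I_{K_n^k}$. Its rank is bounded by the number of rows $kn$, so Theorem~\ref{thm:random} applies to $P_n={\cal I}_{\mathbb{F}}$ with $a=k$ and $d=1$, and yields that a.a.s.\ the graph of the closure $\cl_{{\cal I}_{\mathbb{F}}}(G(n,M_1))$ contains a spanning $k$-partite $1$-tree $T$. By Proposition~\ref{prop:1d_extension} (valid over any field), the rank of $E(T)$ in ${\cal I}_{\mathbb{F}}$ equals $\rank_{\mathbb{F}} I_T=kn-(k-1)$; since $E(T)\subseteq\cl_{{\cal I}_{\mathbb{F}}}(E(G(n,M_1)))$ and the closure of a set has the same rank as the set, the rank of $E(G(n,M_1))$ in ${\cal I}_{\mathbb{F}}$, which is $\rank_{\mathbb{F}} I_{G(n,M_1)}$, is at least $kn-(k-1)$ a.a.s. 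Together with the upper bound, this gives $\rank_{\mathbb{F}} I_{G(n,M_1)}=kn-(k-1)$ a.a.s.

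I do not expect a genuine obstacle here, since the only non-elementary input — that a spanning $k$-partite $1$-tree already appears in the matroid closure at the minimum-degree hitting time $M_1$ — is exactly Theorem~\ref{thm:random}. The two points that merit a line each are the verification that ${\cal I}_{\mathbb{F}}$ meets the rank hypothesis of Theorem~\ref{thm:random} (rank $O(n)$, with the absolute constant $k$) and that Proposition~\ref{prop:1d_extension} is field-independent; both are immediate.
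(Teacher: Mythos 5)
Your proposal is correct and follows essentially the same route as the paper: the paper derives Theorem~\ref{thm:incidence} as implicit in the proof of Theorem~\ref{thm:1d_global}, which likewise applies Theorem~\ref{thm:random} to the linear matroid represented by $I_{K_n^k}$ over the given field and invokes Proposition~\ref{prop:1d_extension} on the resulting spanning $k$-partite $1$-tree. Your version for a single fixed field is a clean extraction of that argument (the union bound over primes in the paper is only needed to handle all fields $GF(q)$ simultaneously, which Theorem~\ref{thm:incidence} does not require).
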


An analogous result for the random subgraph of a complete $k$-graph is given in \cite{cooper19}.

\subsection{Polymatroids for certifying \texorpdfstring{$d$}{d}-tangential weak non-defectiveness}
Our final goal is to prove the global rigidity counterpart of Theorem~\ref{thm:local_rigidity}.
Extending the proof of Theorem~\ref{thm:local_rigidity} for local rigidity to global rigidity requires a new idea. A key in the proof of Theorem~\ref{thm:local_rigidity} is the fact that locally rigidity is characterized in terms of the underlying matroid
and hence the local rigidity problem can be captured in the setting of Section~\ref{sec:random}. 
On the other hand, global rigidity is not a matroidal property and there seems no obvious underlying matroid that can be used for certifying global rigidity. 
We will overcome this difficulty by introducing a new polymatroid that certifies $d$-tangential weak non-defectiveness.

For simplicity of description, we shall first look at the condition (iii) of Theorem~\ref{thm:MM_test}.
(Notice that, in view of Theorem~\ref{thm:MM_test}, testing this condition is the only missing piece for certifying the $d$-dimensional global rigidity of $G(n,M_{d+1})$.)
Our tentative goal is to define a polymatroid whose rank certifies the condition (iii) of Theorem~\ref{thm:MM_test}.

We first introduce a canonical basis in the cycle space.
Consider $K_{\bn}^k$ with $\bn=(n_1,\dots, n_k)$ and $N=\sum_i n_i$.
Recall that the rank of the incidence matrix $I_{K_{\bn}^k}$ of $K_{\bn}^k$ is $N-(k-1)$.
Suppose that a subgraph $G_0$ of $K_{\bn}^k$
satisfies 
\[
\rank I_{G_0}=|E(G_0)|=N-(k-1),
\]
i.e.,
the set of column vectors of $I_{K_{\bn}^k}$ indexed by the edges of $G_0$ forms a basis of the column space of $I_{K_{\bn}^k}$.
Then, for each $e\in E(K_{\bn}^k)\setminus E(G_0)$,
the kernel of $I_{G_0+e}$ is one-dimensional
and thus a non-zero $\omega_e\in \ker I_{G_0+e}$ is uniquely defined up to scaling. 
Since $G_0\subseteq K_{\bn}^k$, we may regard $\omega_e$ as an element of $\ker I_{K_{\bn}^k}$ by setting $\omega_e(f)=0$ for all $f\in E(K_{\bn}^k)\setminus (E(G_0+e))$.

\begin{prop}\label{prop:canonical_basis}
Let $G_0$ be a $k$-partite $k$-graph with $G_0\subseteq K_{\bn}^k$ and $\rank I_{G_0}=|E(G_0)|=N-(k-1)$.
Then, $\{\omega_e: e\in E(K_{\bn}^k)\setminus E(G_0)\}$ forms a basis of $\ker I_{K_{\bn}^k}$.
\end{prop}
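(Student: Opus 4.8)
The plan is to check that the proposed family has the right cardinality, lies in $\ker I_{K_{\bn}^k}$, and is linearly independent; spanning is then automatic.

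First I would do the dimension count. Writing $M:=|E(K_{\bn}^k)|$, the matrix $I_{K_{\bn}^k}$ has $M$ columns and rank $N-(k-1)$, so $\dim\ker I_{K_{\bn}^k}=M-(N-(k-1))$. Since $|E(G_0)|=N-(k-1)$, the number of edges outside $G_0$ is $|E(K_{\bn}^k)\setminus E(G_0)|=M-(N-(k-1))$, which is exactly $\dim\ker I_{K_{\bn}^k}$. As noted just before the statement, each $\omega_e$ (extended by zero off $E(G_0+e)$) lies in $\ker I_{K_{\bn}^k}$. Hence it suffices to prove that $\{\omega_e: e\in E(K_{\bn}^k)\setminus E(G_0)\}$ is linearly independent.

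The key observation — and essentially the only nontrivial point — is that $\omega_e(e)\neq 0$ for every $e\notin E(G_0)$. Indeed, if $\omega_e(e)=0$ then $\omega_e$ is supported on $E(G_0)$ and satisfies $I_{G_0}\omega_e=0$; but $\rank I_{G_0}=|E(G_0)|$ says the columns of $I_{G_0}$ are linearly independent, so $\ker I_{G_0}=\{0\}$ and $\omega_e=0$, contradicting that $\omega_e$ was chosen as a nonzero vector in the one-dimensional space $\ker I_{G_0+e}$. This ``diagonal dominance'' triangularizes the situation: each $\omega_e$ is supported on $E(G_0)\cup\{e\}$, so for distinct $e,e'\notin E(G_0)$ one has $\omega_{e'}(e)=0$. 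Consequently, from any relation $\sum_{e} c_e\omega_e=0$, reading off the coordinate indexed by a fixed edge $e\notin E(G_0)$ yields $c_e\,\omega_e(e)=0$, whence $c_e=0$. Since $e$ was arbitrary, all coefficients vanish, so the family is linearly independent, and being of size $\dim\ker I_{K_{\bn}^k}$ it is a basis of $\ker I_{K_{\bn}^k}$.

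There is no real obstacle here: the argument is a routine verification once the hypothesis $\rank I_{G_0}=|E(G_0)|$ is used to conclude $\omega_e(e)\neq 0$. The only thing to be careful about is the bookkeeping that $\omega_e$, after the zero-extension, genuinely annihilates every row of $I_{K_{\bn}^k}$ (including rows indexed by vertices not in $G_0+e$, on which the corresponding sum is empty), but this is immediate from the definition of the incidence matrix.
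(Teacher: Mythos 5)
Your proposal is correct and follows essentially the same route as the paper: linear independence via the observation that $\omega_e(e')\neq 0$ exactly when $e=e'$, combined with the dimension count $\dim\ker I_{K_{\bn}^k}=|E(K_{\bn}^k)\setminus E(G_0)|$. Your explicit justification that $\omega_e(e)\neq 0$ (via $\ker I_{G_0}=\{0\}$) is a detail the paper leaves implicit, but it is the same argument.
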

\begin{proof}
$\{\omega_e: e\in E(K_{\bn}^k)\setminus E(G_0)\}$ is a linearly independent set
because, for $e,e'\in E(K_{\bn}^k)\setminus E(G_0)$, 
$\omega_e(e')\neq 0$ if and only if $e=e'$.
Moreover, $\{\omega_e: e\in E(K_{\bn}^k)\setminus E(G_0)\}$ spans $\ker I_{K_{\bn}^k}$ because $\dim \ker I_{K_{\bn}^k}=|E(K_{\bn}^k)\setminus E(G_0)|$ holds by the assumption that the columns of $E(G_0)$ form a basis of the column space of $I_{K_{\bn}^k}$.
\end{proof}

For simplicity of description, we shall also set $\omega_e$ to be the zero vector if $e\in E(G_0)$. 
Then, $\{\omega_e: e\in E(K_{\bn}^k)\}$ is called a {\em canonical cycle basis} with respect to $G_0$.

Using the canonical cycle basis $\{\omega_e: e\in E(K_{\bn}^k)\}$, we define a linear polymatroid as follows.
We define a set function $r_{G_0}:2^{E(K_{\bn}^k)}\rightarrow \mathbb{Z}$ by
\[
r_{G_0}(F)=\dim \langle {\rm im}\ A_{\omega_e}: e\in F\rangle\qquad (F\subseteq E(K_{\bn}^k)),
\]
where $\langle \cdot \rangle$ denotes the linear span.
Then, $P_{G_0}:=(E(K_{\bn}^k),r_{G_0})$ is a polymatroid because 
it can be linearly represented by associating the linear space 
${\rm im}\ A_{\omega_e}$ in $\mathbb{F}^N$ with each $e\in E(K_{\bn}^k)$. 
This polymatroid $P_{G_0}$ is called the {\em cycle polymatroid} with respect to $G_0$. 
Note that the cycle polymatroid has rank at most $N-k$
by Proposition~\ref{prop:lowerbound}.

\begin{prop}\label{prop:cycle_polymatroid}
Let $G_0\subseteq K_{\bn}^k$ be a $k$-partite $k$-graph with $N$ vertices and $\rank I_{G_0}=|E(G_0)|=N-(k-1)$, 
and let $(E(K_{\bn}^k),r_{G_0})$ be the cycle polymatroid with respect to $G_0$. Then, for any $G$ with $G_0\subseteq G\subseteq K_{\bn}^k$,
\[
r_{G_0}(E(G))=\dim \left< {\rm im } A_{\omega}: \omega\in \ker I_G \right>.
\]
In particular, $G$ satisfies (\ref{eq:simplified_kernel}) if $r_{G_0}(E(G))=N-k$.
\end{prop}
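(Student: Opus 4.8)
The plan is to reduce the statement to the canonical cycle basis and then exploit the linearity of $\omega\mapsto A_\omega$. First I would record that, since $G_0\subseteq G\subseteq K_{\bn}^k$ and the columns of $I_{K_{\bn}^k}$ indexed by $E(G_0)$ already span the whole column space of $I_{K_{\bn}^k}$, the column spaces of $I_{G_0}$, $I_G$ and $I_{K_{\bn}^k}$ all coincide; hence $\rank I_G=N-(k-1)$ and $\dim\ker I_G=|E(G)|-(N-(k-1))=|E(G)\setminus E(G_0)|$. Repeating the argument of Proposition~\ref{prop:canonical_basis} almost verbatim — each $\omega_e$ with $e\in E(G)\setminus E(G_0)$ is supported on $E(G_0+e)\subseteq E(G)$ and so lies in $\ker I_G$, and these vectors are linearly independent because $\omega_e(e')\neq 0$ iff $e=e'$ — I would conclude that $\{\omega_e:e\in E(G)\setminus E(G_0)\}$ is a basis of $\ker I_G$.

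Next I would use that the assignment $\omega\mapsto A_\omega$ is $\mathbb{F}$-linear, since each entry $A_\omega[i,j]=\sum_{e\in E(G):\{i,j\}\subseteq e}\omega(e)$ depends linearly on $\omega$. Expanding an arbitrary $\omega\in\ker I_G$ in the basis above gives $\im A_\omega\subseteq\langle\im A_{\omega_e}:e\in E(G)\setminus E(G_0)\rangle$, and conversely each $\omega_e$ with $e\in E(G)\setminus E(G_0)$ lies in $\ker I_G$; therefore $\langle\im A_\omega:\omega\in\ker I_G\rangle=\langle\im A_{\omega_e}:e\in E(G)\setminus E(G_0)\rangle$. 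Because $\omega_e$ is the zero vector (hence $A_{\omega_e}=0$) for $e\in E(G_0)$ by convention, the right-hand side is unchanged if we enlarge the index set to all of $E(G)$, and by definition of $r_{G_0}$ its dimension is $r_{G_0}(E(G))$. This establishes the displayed identity.

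For the final assertion I would observe that each $A_\omega$ is symmetric, so $\ker A_\omega=(\im A_\omega)^{\perp}$ with respect to the standard nondegenerate bilinear form on $\mathbb{F}^N$; this remains valid for $\mathbb{F}=\mathbb{C}$ precisely because we use the bilinear, not the Hermitian, form, so that $\dim W+\dim W^{\perp}=N$ for every subspace $W$. Hence $\bigcap_{\omega\in\ker I_G}\ker A_\omega=\bigl(\sum_{\omega\in\ker I_G}\im A_\omega\bigr)^{\perp}$ has dimension $N-\dim\langle\im A_\omega:\omega\in\ker I_G\rangle=N-r_{G_0}(E(G))$, so the hypothesis $r_{G_0}(E(G))=N-k$ forces this intersection to have dimension exactly $k$, i.e.\ (\ref{eq:simplified_kernel}) holds. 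I do not expect a genuine obstacle in this argument; the only point requiring care is this last step — remembering that for a symmetric matrix over $\mathbb{C}$ the orthogonal complement must be taken with respect to the bilinear form — together with the bookkeeping that the vanishing $\omega_e$ contribute nothing to the span. (One could additionally remark that, combined with the analogue of Proposition~\ref{prop:lowerbound} for $A_\omega$, one always has $r_{G_0}(E(G))\le N-k$, so the implication is in fact an equivalence.)
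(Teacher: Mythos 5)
Your argument is correct and follows essentially the same route as the paper: restrict the canonical cycle basis $\{\omega_e\}$ to $\ker I_G$ and use the linearity of $\omega\mapsto A_\omega$ to identify $\langle \im A_\omega:\omega\in\ker I_G\rangle$ with $\langle \im A_{\omega_e}: e\in E(G)\rangle$. Your explicit justification of the final implication via the symmetry of $A_\omega$ and the bilinear-form duality $\bigcap_\omega\ker A_\omega=\bigl(\sum_\omega \im A_\omega\bigr)^{\perp}$ is exactly the step the paper leaves implicit (it is also what underlies the paper's remark that the cycle polymatroid has rank at most $N-k$), and it is carried out correctly.
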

\begin{proof}
Let $\{\omega_e: e\in E(G)\}$ be a canonical cycle basis with respect to $G_0$.
By Proposition~\ref{prop:canonical_basis},
any $\omega\in \ker I_G$ is written as a linear combination of $\omega_e\ (e\in E(G))$.
Since each entry of $A_{\omega}$ is linear in the entries of $\omega$, $A_{\omega}$ is also written as a linear combination of $A_{\omega_e}$ over $e\in E(G)$.
This in turn implies that 
${\rm im}\ A_{\omega}\subseteq \langle {\rm im}\ A_{\omega_e}: e\in E(G)\rangle$
and hence 
$r_{G_0}(E(G))=\dim \langle {\rm im}\ A_{\omega_e}: e\in E(G)\rangle=\dim \langle {\rm im}\ A_{\omega}: \omega\in \ker I_G \rangle$.
\end{proof}

Exactly same argument applies to the condition (\ref{eq:kernel}) for $d$-tangential weak non-defectiveness just
by replacing $I_{G}$ with $Jf_{G}^d(\bp)^{\top}$ 
and $A_{\omega}$ with $A_{\omega}^1$.
The corresponding linear polymatroid is called the {\em $d$-dimensional stress polymatroid} with respect to $G_0$.
(Note that, like the generic rigidity matroid, the stress polymatroid is independent of the choice of $\bp$ as long as $\bp$ is generic.)
The proof of Proposition~\ref{prop:cycle_polymatroid} can be easily adapted to show the following.
\begin{prop}\label{prop:stress_polymatroid}
Let $G_0\subseteq K_{\bn}^k$ be a $k$-partite $k$-graph with 
$N$ vertices and $\rank Jf_{G_0}^d(\bp)=|E(G_0)|=dN-d(k-1)$, 
and let $(E(K_{\bn}^k),r_d)$ be the $d$-dimensional stress polymatroid with respect to $G_0$. Then, for any $G$ with $G_0\subseteq G\subseteq K_{\bn}^k$,
\[
r_d(E(G))=\dim \left< {\rm im } A_{\omega}^1: \omega\in \ker Jf_{G_0}^d(\bp)^{\top} \right>.
\]
In particular, $G$ satisfies (\ref{eq:kernel}) if $r_d(E(G))=N-k$.
\end{prop}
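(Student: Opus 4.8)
The plan is to transcribe the proof of Proposition~\ref{prop:cycle_polymatroid}, replacing the incidence matrix $I_G$ by the transposed Jacobian $Jf_G^d(\bp)^{\top}$ and the adjacency matrix $A_{\omega}$ by the first coordinated adjacency matrix $A_{\omega}^1$ throughout. The first task is the analogue of Proposition~\ref{prop:canonical_basis}. Since $\rank Jf_{G_0}^d(\bp)=|E(G_0)|=dN-d(k-1)$ while $\rank Jf_G^d(\bp)\le dN-d(k-1)$ for every $k$-partite $G$ (the bound recalled just before Proposition~\ref{prop:infinitesimal}), monotonicity of rank under adding rows forces $\rank Jf_G^d(\bp)=dN-d(k-1)=|E(G_0)|$ for every $G$ with $G_0\subseteq G\subseteq K_{\bn}^k$. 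Consequently, for each $e\in E(G)\setminus E(G_0)$ the kernel of $Jf_{G_0+e}^d(\bp)^{\top}$ is one-dimensional; let $\omega_e$ span it, and note $\omega_e(e)\ne 0$, since otherwise $\omega_e$ would lie in the trivial kernel of $Jf_{G_0}^d(\bp)^{\top}$. Extending $\omega_e$ by $0$ outside $E(G_0+e)$ and setting $\omega_e:=0$ for $e\in E(G_0)$, I would verify, exactly as in Proposition~\ref{prop:canonical_basis}, that $\{\omega_e:e\in E(G)\setminus E(G_0)\}$ is linearly independent (because $\omega_e(e')\ne 0$ iff $e=e'$ when $e'\notin E(G_0)$) and of cardinality $|E(G)|-|E(G_0)|=\dim\ker Jf_G^d(\bp)^{\top}$, hence a basis of $\ker Jf_G^d(\bp)^{\top}$.

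Granting this, the identity $r_d(E(G))=\dim\langle{\rm im}\,A_{\omega}^1:\omega\in\ker Jf_G^d(\bp)^{\top}\rangle$ is obtained as in Proposition~\ref{prop:cycle_polymatroid}: for a fixed generic $\bp$ every entry of $A_{\omega}^1$ is a fixed linear form in the coordinates of $\omega$ (the factors $\prod_{v\in e\setminus\{i,j\}}p_{v,1}$ are scalars), so $\omega\mapsto A_{\omega}^1$ is linear. Hence any $\omega\in\ker Jf_G^d(\bp)^{\top}$, written $\omega=\sum_{e}c_e\omega_e$, gives $A_{\omega}^1=\sum_e c_e A_{\omega_e}^1$, so ${\rm im}\,A_{\omega}^1\subseteq\langle{\rm im}\,A_{\omega_e}^1:e\in E(G)\rangle$; conversely each $\omega_e$ with $e\in E(G)$ lies in $\ker Jf_G^d(\bp)^{\top}$, because its support is contained in $E(G_0+e)\subseteq E(G)$ and the rows of the rigidity Jacobian depend only on the edge indexing them. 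These inclusions force the two spans to coincide, and since $r_d(E(G))$ is by definition $\dim\langle{\rm im}\,A_{\omega_e}^1:e\in E(G)\rangle$, the equality follows. (Independence of the generic $\bp$ is granted, just as for the generic rigidity matroid.)

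The final clause uses the symmetry of $A_{\omega}^1$. A symmetric matrix $A$ over $\mathbb{F}$ satisfies $\ker A=({\rm im}\,A)^{\perp}$ with respect to the standard bilinear form on $\mathbb{F}^N$ (indeed $({\rm im}\,A)^{\perp}=\ker A^{\top}=\ker A$), so $\bigcap_{\omega}\ker A_{\omega}^1=\bigl(\langle{\rm im}\,A_{\omega}^1:\omega\in\ker Jf_G^d(\bp)^{\top}\rangle\bigr)^{\perp}$, and therefore $\dim\bigcap_{\omega}\ker A_{\omega}^1=N-r_d(E(G))$. Since Proposition~\ref{prop:lowerbound} gives $\dim\bigcap_{\omega}\ker A_{\omega}^1\ge k$, i.e.\ $r_d(E(G))\le N-k$, the condition $r_d(E(G))=N-k$ is equivalent to $\dim\bigcap_{\omega}\ker A_{\omega}^1=k$, which is exactly~(\ref{eq:kernel}).

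The one step that is not purely formal is the rank-saturation in the first paragraph: one must be certain that enlarging $G_0$ inside $K_{\bn}^k$ leaves $\rank Jf_G^d(\bp)$ equal to $|E(G_0)|$, for this is what makes the canonical cycle vectors $\omega_e$ well defined and makes them span all of $\ker Jf_G^d(\bp)^{\top}$. Once that is in place, the rest is a faithful rerun of the $I_G$ argument, relying only on the linearity of $\omega\mapsto A_{\omega}^1$ and the symmetry of $A_{\omega}^1$.
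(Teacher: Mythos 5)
Your proof is correct and is exactly the adaptation the paper intends: the paper gives no separate argument for Proposition~\ref{prop:stress_polymatroid} beyond ``replace $I_G$ by $Jf_G^d(\bp)^{\top}$ and $A_{\omega}$ by $A_{\omega}^1$ in Proposition~\ref{prop:cycle_polymatroid},'' and you have carried out that replacement faithfully, including the rank-saturation step that makes the canonical stress basis well defined and the orthogonality argument (via symmetry of $A_{\omega}^1$) for the final clause. You also correctly read the kernel in the displayed identity as $\ker Jf_G^d(\bp)^{\top}$ rather than the literal $\ker Jf_{G_0}^d(\bp)^{\top}$ (which is zero), matching the pattern of Proposition~\ref{prop:cycle_polymatroid}.
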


\subsection{Threshold for \texorpdfstring{$d$}{d}-dimensional global rigidity}
\label{subsec:main_proof}
We begin with a lemma about $k$-partite $d$-trees.
\begin{lemma}\label{lem:tree_cycle_space}
Any $k$-partite $2$-tree satisfies (\ref{eq:simplified_kernel}).
\end{lemma}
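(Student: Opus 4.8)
The plan is to prove~(\ref{eq:simplified_kernel}) by induction on the number of degree-$2$ extensions used to build the $k$-partite $2$-tree, after isolating a convenient reformulation. For a $k$-partite $G$ the characteristic vectors $\chi_{V_1},\dots,\chi_{V_k}$ of the colour classes always lie in $\bigcap_{\omega\in\ker I_G}\ker A_{\omega}$: for $\omega\in\ker I_G$ and $u$ in colour class $i$ one has $(A_{\omega}\chi_{V_j})_u=[i\neq j]\sum_{e\ni u}\omega(e)=0$. So the left side of~(\ref{eq:simplified_kernel}) is always at least $k$, and I only need the reverse bound: every $z\in\bigcap_{\omega\in\ker I_G}\ker A_{\omega}$ is constant on each colour class. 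The reformulation I would use rests on the one-line identity $(A_{\omega}z)_u=\sum_{e\ni u}\omega(e)\,\zeta_z(e)$, valid for $\omega\in\ker I_G$, where $\zeta_z(e):=\sum_{u\in e}z_u$; it shows that $z\in\bigcap_{\omega\in\ker I_G}\ker A_{\omega}$ if and only if the diagonal matrix $\diag(\zeta_z(e))_{e}$ maps $\ker I_G$ into itself, equivalently (transposing) if and only if for every vertex $u$ the vector $e\mapsto[u\in e]\,\zeta_z(e)$ lies in the row space of $I_G$.

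First I would settle the base case $G=K_2^k$. Index the two vertices of each colour class by a sign and the edges by $s\in\{+,-\}^k$; then $\zeta_z(e_s)=\sum_i z_{i^{s_i}}$ is an \emph{affine} function of the $\pm1$-variables $\sigma_1,\dots,\sigma_k$, i.e.\ an element of the row space $L$ of $I_{K_2^k}$, which is exactly the $(k+1)$-dimensional space of such affine functions; its rows are the indicators $\mathbf{1}_{\{s_i=\epsilon\}}=\tfrac12(1\pm\sigma_i)$. The reformulation above forces $\zeta_z\cdot\mathbf{1}_{\{s_i=\epsilon\}}\in L$ for all $i,\epsilon$. Writing $\zeta_z=a_0+\sum_j a_j\sigma_j$, the product $\zeta_z\cdot\tfrac12(1+\sigma_i)$ contains the quadratic Walsh monomials $\tfrac12 a_j\sigma_i\sigma_j$ for $j\neq i$, so membership in $L$ forces $a_j=0$ for all $j\neq i$; letting $i$ range over $\{1,\dots,k\}$ forces every $a_j=0$. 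Hence $\zeta_z$ is constant, and comparing linear coefficients gives $z_{i^+}=z_{i^-}$ for each $i$, as desired.

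For the inductive step, let $H=G+v$ be obtained from a $k$-partite $2$-tree $G$ by a degree-$2$ extension through distinct hyperedges $e_1,e_2\ni v$, with $v$ in colour class $j$, and take $z\in\bigcap_{\omega\in\ker I_H}\ker A_{\omega}^{H}$. Extending $\omega\in\ker I_G$ by zero on $e_1,e_2$ embeds $\ker I_G$ into $\ker I_H$, and for such $\omega$ one has $A_{\omega}^H=\begin{pmatrix}A_{\omega}^G&0\\0&0\end{pmatrix}$; hence $z|_{V(G)}\in\bigcap_{\omega\in\ker I_G}\ker A_{\omega}^{G}$, so the induction hypothesis gives $z|_{V(G)}=\sum_i c_i\chi_{V_i\cap V(G)}$ for scalars $c_i$, and therefore $\zeta_z\equiv C:=\sum_i c_i$ on every edge of $G$ (each meets every colour class of $G$ exactly once). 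It remains only to identify $z_v$. Because a $k$-partite $2$-tree contains a spanning $k$-partite $1$-tree, Proposition~\ref{prop:1d_extension} and~(\ref{eq:IG_lower}) give $\rank I_G=|V(G)|-(k-1)$ and $\ker I_G^{\top}=$ the trivial kernel; a short orthogonality check then produces $\omega^{*}\in\ker I_H$ with $\omega^{*}(e_1)=1=-\omega^{*}(e_2)$ and $I_G(\omega^{*}|_{E(G)})=\chi_{e_2\cap V(G)}-\chi_{e_1\cap V(G)}$. Feeding $\omega^{*}$ into the reformulation, and using $\zeta_z\equiv C$ on $E(G)$ together with $\zeta_z(e_1)=\zeta_z(e_2)$ (both equal $z_v+\sum_{i\neq j}c_i$), the equations $\sum_{e\in E(H):\,e\ni u}\zeta_z(e)\,\omega^{*}(e)=0$ for $u\in V(G)$ collapse to $\bigl(\zeta_z(e_1)-C\bigr)\bigl([u\in e_1]-[u\in e_2]\bigr)=0$; since $e_1\cap V(G)\neq e_2\cap V(G)$ this forces $\zeta_z(e_1)=C$, i.e.\ $z_v=c_j$. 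Thus $z=\sum_i c_i\chi_{V_i}$, constant on every colour class of $H$, completing the induction.

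I expect the base case $G=K_2^k$ to be the main obstacle: unlike the inductive step it is genuinely global and appears to need the Walsh/Fourier structure of $\{+,-\}^k$ (or an equivalent explicit description of $\ker I_{K_2^k}$). Within the inductive step the only delicate point is the construction of the stress $\omega^{*}$, which is precisely where the $2$-tree hypothesis enters, through the spanning $1$-tree and the resulting exact description of $\ker I_G^{\top}$.
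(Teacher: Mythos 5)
Your proof is correct, and while it shares the paper's overall skeleton (induction on degree-$2$ extensions), both halves are executed by genuinely different arguments. The unifying device is your identity $(A_{\omega}z)_u=\sum_{e\ni u}\omega(e)\zeta_z(e)$ for $\omega\in\ker I_G$, which recasts (\ref{eq:simplified_kernel}) as invariance of $\ker I_G$ (equivalently of the row space of $I_G$) under $\diag(\zeta_z(e))_e$; this is valid and makes both steps transparent. For the base case you prove the statement for $K_2^k$ directly via the Walsh expansion on $\{\pm1\}^k$ (the row space of $I_{K_2^k}$ is the space of affine functions, and multiplying an affine $\zeta_z$ by $\tfrac12(1+\sigma_i)$ produces quadratic Walsh monomials unless all linear coefficients except possibly $a_i$ vanish), whereas the paper instead verifies the much smaller instance $K^k_{(2,2,1,\dots,1)}$ by an explicit $2\times2$-block computation (Example~\ref{ex:K}) and reaches $K_2^k$ by further degree-$2$ extensions plus monotonicity of the common kernel under adding edges. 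For the inductive step you argue constructively: you build the new stress $\omega^{*}$ explicitly (the orthogonality check against the trivial kernel of $I_G^{\top}$ is exactly right, and it is here that $\rank I_G=|V(G)|-(k-1)$, hence the $2$-tree hypothesis, is used) and then solve for $z_v$, finding $z_v=c_j$; the paper instead runs a dimension-count contradiction, showing that failure of (\ref{eq:simplified_kernel}) for $H$ would force $\chi_v\in\ker A_{\omega^{*}}$, which is impossible because the $v$-column of $A_{\omega^{*}}$ is nonzero. The two inductive steps are logically equivalent, but yours pins down the common kernel exactly rather than merely bounding its dimension, and your base case is self-contained where the paper's leans on a worked example; the price is the extra Fourier bookkeeping on the hypercube. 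No gaps.
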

\begin{proof}
In Example~\ref{ex:K}, we have seen that $K_{(2,2,1,\dots, 1)}^k$ satisfies (\ref{eq:simplified_kernel}).
A spanning subgraph of $K_2^k$ 
can be constructed from $K_{(2,2,1,\dots, 1)}^k$
by degree-two extensions 
and then any $k$-partite $2$-tree can be constructed from $K_2^k$
by degree-two extensions.
So it suffices to show that each degree-two extensions
from preserves the property of having (\ref{eq:simplified_kernel}).

Suppose $G$ satisfies (\ref{eq:simplified_kernel})
and $H$ is obtained from $G$ by a degree-two extension by adding a new vertex $v$ and new two edges $e_1, e_2$ containing $v$.
Since $G\subset H$, there is an obvious linear injection $\psi$ from $\ker I_G$ to $\ker I_H$ that sends $\omega\in \ker I_G$ 
to $\omega'\in \ker I_H$ with $\omega'(f)=\omega(f)$ if $f\in E(G)$ and otherwise $\omega'(f)=0$.
Note that $\rank I_H=\rank I_G+1$ and $|E(H)|=|E(G)|+2$.
So $\ker I_H$ has an element $\omega^*$ such that 
$\ker I_H = \langle \psi(\ker I_G)\cup \{\omega^*\}\rangle$.
Since only the columns of $e_1$ and $e_2$ have non-zero entries on the row of $v$ in $I_H$, we actually have $\omega^*(e_1)=-\omega^*(e_2)\neq 0$.

Observe that $A_{\psi(\omega)}=\begin{pmatrix} 0 & 0 \\ 0 & A_{\omega}\end{pmatrix}$ for $\omega\in \ker I_H$
(where the first row and the first column are indexed by the new vertex $v$.)
So, by the fact that $G$ satisfies (\ref{eq:simplified_kernel}), we have
\begin{equation}
\label{eq:cycle_space_extension}
\dim \bigcap_{\omega\in \ker I_G} \ker A_{\psi(\omega)}=k+1,
\end{equation}
and the characteristic vector $\chi_v$ of $v$ is in this common kernel.

Suppose $H$ does not satisfy (\ref{eq:simplified_kernel}).
Since $\psi(\omega)\in \ker I_H$, 
(\ref{eq:cycle_space_extension}) implies that 
$\bigcap_{\omega\in \ker I_G} \ker A_{\psi(\omega)}=\bigcap_{\omega'\in \ker I_H} \ker A_{\omega'}$.
This in particular implies that 
$\chi_v\in \ker A_{\omega^*}$.
However, this leads to a contradiction because 
$\omega^*(e_1)\neq 0$ and $A_{\omega^*}$ has a non-zero entry on the column indexed by $v$.
Therefore, $H$ satisfies (\ref{eq:simplified_kernel})
and any $k$-partite 2-tree satisfies (\ref{eq:simplified_kernel}).
\end{proof}

We are now ready to prove our main theorem.
\begin{theorem}~\label{thm:global}
Let $k,d$ be positive integers with $k\geq 3$ and 
$\mathbb{F}\in \{\mathbb{R},\mathbb{C}\}$.
Then, a.a.s.~$G(n,M_{d+1})\subseteq K_n^k$ is globally rigid in $\mathbb{F}^d$.
\end{theorem}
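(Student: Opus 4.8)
The plan is to deduce the statement from Theorem~\ref{thm:MM_test} applied to $G:=G(n,M_{d+1})$, so it suffices to verify a.a.s.~the three hypotheses (i)--(iii) of that theorem. For hypothesis (i): since $M_1\le M_{d+1}$, in the standard edge-exposure coupling $G(n,M_1)\subseteq G$, and Theorem~\ref{thm:1d_global} gives that $G(n,M_1)$ is a.a.s.~globally rigid in $\mathbb{F}^1$; global rigidity in $\mathbb{F}^1$ is preserved under adding edges (if $G\subseteq G'$ and $f^1_{G'}(\bp)=f^1_{G'}(\bq)$ then $f^1_G(\bp)=f^1_G(\bq)$, whence $\bq$ is congruent to $\bp$), so $G$ is a.a.s.~globally rigid in $\mathbb{F}^1$. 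Hypothesis (ii) is exactly Theorem~\ref{thm:local_rigidity} with $d+1$ in place of $d$ and with the field $\mathbb{C}$, so $G$ is a.a.s.~locally rigid in $\mathbb{C}^{d+1}$. It remains to establish hypothesis (iii): that a.a.s.~$G$ satisfies (\ref{eq:simplified_kernel}).

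For (iii) I would use the cycle polymatroid. By Theorem~\ref{thm:incidence} (equivalently, the argument behind Theorem~\ref{thm:1d_global}), a.a.s.~$\rank I_{G(n,M_1)}=kn-(k-1)$, so $G(n,M_1)\subseteq G$ contains a subgraph $G_0$ with $\rank I_{G_0}=|E(G_0)|=kn-(k-1)$; form the cycle polymatroid $P_{G_0}$ on $E(K_n^k)$, which has rank at most $kn-k$ by Proposition~\ref{prop:lowerbound}, and note that $E(G_0)\subseteq\cl_{P_{G_0}}(X)$ for every $X$ since each $e\in E(G_0)$ has $\omega_e=0$. Apply Theorem~\ref{thm:random} to $P_{G_0}$ with parameter $2$ (and $a=k$): a.a.s.~the graph of $\cl_{P_{G_0}}(G(n,M_2))$ contains a spanning $k$-partite $2$-tree $T$. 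Since $M_2\le M_{d+1}$ we have $\cl_{P_{G_0}}(G(n,M_2))\subseteq\cl_{P_{G_0}}(G)$, so a.a.s.~the graph of $\cl_{P_{G_0}}(G)$ contains $G_0\cup T$. Now $T$ satisfies (\ref{eq:simplified_kernel}) by Lemma~\ref{lem:tree_cycle_space}, and (\ref{eq:simplified_kernel}) is preserved under adding edges (the common kernel only shrinks while Proposition~\ref{prop:lowerbound} keeps its dimension at least $k$), so $G_0\cup T$ also satisfies (\ref{eq:simplified_kernel}); as $G_0\subseteq G_0\cup T$, Proposition~\ref{prop:cycle_polymatroid} gives $r_{G_0}(E(G_0\cup T))=kn-k$. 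By monotonicity of $r_{G_0}$ and $G_0\cup T\subseteq\cl_{P_{G_0}}(G)$ we obtain $r_{G_0}(E(G))=r_{G_0}(\cl_{P_{G_0}}(E(G)))\ge r_{G_0}(E(G_0\cup T))=kn-k$, hence $r_{G_0}(E(G))=kn-k$; since $G_0\subseteq G$, Proposition~\ref{prop:cycle_polymatroid} yields that $G$ satisfies (\ref{eq:simplified_kernel}). Having verified (i)--(iii), Theorem~\ref{thm:MM_test} gives that $G$ is a.a.s.~globally rigid in $\mathbb{F}^d$.

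The main obstacle is the invocation of Theorem~\ref{thm:random} for the polymatroid $P_{G_0}$: as stated, that theorem concerns a polymatroid fixed in advance, whereas here $P_{G_0}$ depends on the random graph through the choice of basis $G_0$. One cannot sidestep this by taking $G_0$ deterministic, because a fixed $G_0$ with $\Theta(n)$ edges is a.a.s.~\emph{not} a subgraph of $G(n,M_{d+1})$, and Proposition~\ref{prop:cycle_polymatroid} genuinely requires $G_0\subseteq G$ --- replacing $G$ by $G\cup G_0$ does not help, since (\ref{eq:simplified_kernel}) is monotone in the wrong direction for that substitution. The resolution is to expose the edges of $K_n^k$ in an order in which the graph $G(n,cn)$ used inside the proof of Lemma~\ref{lem:clique} is revealed first, with $G_0$ extracted only from subsequently exposed edges of $G(n,M_1)$, and then to check that the closure-growth computation in Lemma~\ref{lem:closure_size}, together with the uniformity over all polymatroids of bounded rank of the probability bound $1-\left(\frac{1}{2k}\right)^{kn}$, remains valid under this coupling. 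I expect this bookkeeping, rather than any genuinely new idea, to be the technical heart of the proof.
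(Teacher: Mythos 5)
Your proposal is correct and takes essentially the same route as the paper's own proof: hypotheses (i) and (ii) of Theorem~\ref{thm:MM_test} are discharged via Theorems~\ref{thm:1d_global} and~\ref{thm:local_rigidity} (the monotonicity of $1$-dimensional global rigidity under edge addition, which you make explicit, is left implicit in the paper), and hypothesis (iii) is discharged via the cycle polymatroid $P_{G_0}$, Theorem~\ref{thm:random}, Lemma~\ref{lem:tree_cycle_space}, and the rank chain through Proposition~\ref{prop:cycle_polymatroid}, in exactly the order you give.

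The obstacle you single out at the end is genuine, but it is not one you have introduced: the paper's proof applies Theorem~\ref{thm:random} to $P_{G_0}$ with $G_0$ extracted from the realized random graph, without addressing the dependence between the polymatroid and the edge process. Your diagnosis of why the easy escapes fail is accurate --- a deterministic $G_0$ of size $kn-(k-1)$ is a.a.s.\ not contained in $G(n,M_{d+1})$, and the hypothesis $G_0\subseteq G$ of Proposition~\ref{prop:cycle_polymatroid} is used precisely at the final step where $r_{G_0}(E(G))=kn-k$ is converted into (\ref{eq:simplified_kernel}); a union bound over all bases is also unavailable, since the number of bases exceeds the $\left(\frac{1}{2k}\right)^{kn}$ failure bound of Lemma~\ref{lem:clique}, and Lemma~\ref{lem:extension} carries no quantitative bound at all. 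So you have reproduced the published argument and, in addition, located the one point where it needs the exposure-order/coupling repair you sketch; in carrying that out it helps that only Lemma~\ref{lem:clique} in the proof of Theorem~\ref{thm:random} sees the polymatroid (Lemma~\ref{lem:extension} is a property of the graph alone, and conditioning on $E(G_0)$ excludes only $O(n)$ of the $n^k$ edges from the first $cn$ exposures, perturbing the closure-growth estimate of Lemma~\ref{lem:closure_size} only by $o(1)$).
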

\begin{proof}
We use Theorem~\ref{thm:MM_test} to check global rigidity in $\mathbb{F}^d$.
By Theorem~\ref{thm:local_rigidity} and Theorem~\ref{thm:1d_global},
a.a.s.~$G(n,M_{d+1})$ satisfies the conditions (i) and (ii) of Theorem~\ref{thm:MM_test}.

It remains to check the condition (iii) of Theorem~\ref{thm:MM_test}.
By Theorem~\ref{thm:incidence}, a.a.s.~$G(n,M_{d+1})$ satisfies 
$\rank I_{G(n,M_{d+1})}=kn-(k-1)$.
Conditioned on this property in $G(n,M_{d+1})$,
$G(n,M_{d+1})$ has a spanning subgraph $G_0$ satisfying 
$\rank I_{G_0}=|E(G_0)|=kn-(k-1)$.
For each such $G_0$, the cycle polymatroid $P_{G_0}=(E(K_n^k),r_{G_0})$ with respect to $G_0$ is defined.
By Theorem~\ref{thm:random},
the graph of the closure of $G(n,M_{d+1})$ in $P_{G_0}$ contains a spanning $k$-partite $2$-tree $T$.
By Lemma~\ref{lem:tree_cycle_space}, 
$T$ satisfies (\ref{eq:simplified_kernel}).

Let $\{\omega_e: e\in E(K_n^k)\}$ be a canonical cycle basis with respect to $G_0$.
Since $E(G_0)$ and $E(T)$ are in the closure of $E(G(n,M_{d+1}))$ in $P_{G_0}$, 
we have 
\begin{align*}
r_{G_0}(E(G(n,M_{d+1})))&\geq r_{G_0}(E(G_0\cup T)) & \\
&=\dim \langle {\rm im} A_{\omega_e}: e\in E(G_0\cup T)\rangle & (\text{by the definition of $r_{G_0}$})\\
&=\dim \langle {\rm im} A_{\omega}: \omega \in \ker I_{G_0\cup T}\rangle & (\text{by Proposition~\ref{prop:cycle_polymatroid}})\\
&\geq \dim \langle {\rm im} A_{\omega}: \omega \in \ker I_{T}\rangle \\
&=kn-k & (\text{since $T$ satisfies (\ref{eq:simplified_kernel})}).
\end{align*}
Since the rank of $P_{G_0}$ is upper bounded by $kn-k$, this implies that $r_{G_0}(E(G(n,M_{d+1})))=dk-k$.
By Proposition~\ref{prop:cycle_polymatroid},
$G(n,M_{d+1})$ satisfies (\ref{eq:simplified_kernel}).

After all, a.a.s.~$G(n,M_{d+1})$ satisfies all the conditions of Theorem~\ref{thm:MM_test} and the $d$-dimensional global rigidity of $G(n,M_{d+1})$ holds.
\end{proof}

By Proposition~\ref{prop:min_degree}, 
Theorem~\ref{thm:local_rigidity} also implies that 
$p\geq \frac{\log n+d\log \log n+o(\log \log n)}{n^{k-1}}$ is a sufficient probability for the global rigidity of $G(n,p)$ in $\mathbb{F}^d$.
This gives the right bound for the leading term $\log n$ including its coefficient.

We should point out that there is a little gap in the coefficient of the second leading term, $\log \log n$.
Specifically, the current upper bound from Theorem~\ref{thm:global} gives $d \log \log n$ for the second leading term.
On the other hand, the minimum degree bound for globally rigid graphs is $d$ and it only gives $(d-1) \log \log n$ for the lower bound of the second leading term.
Theorem~\ref{thm:1d_global} for 1-dimensional global rigidity suggests that $d-1$ would be a right coefficient. 

\subsection{Threshold for \texorpdfstring{$d$}{d}-tangential weak non-defectiveness}
The same argument can be used to check 
$d$-tangential weak non-defectiveness.

The following lemma can be proved in the same manner as Lemma~\ref{lem:tree_cycle_space}.
We omit the details since it is tedious and is not related to the main part of the paper.
\begin{lemma}\label{lem:d+1tree}
Let $G$ be a $k$-partite $(d+1)$-tree.
Then a generic $d$-dimensional framework $(G,\bp)$ of $G$
satisfies (\ref{eq:kernel}).
\end{lemma}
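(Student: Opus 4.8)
The plan is to prove the lemma by mirroring the proof of Lemma~\ref{lem:tree_cycle_space}, systematically replacing the incidence matrix $I_G$ by the transposed Jacobian $Jf_G^d(\bp)^{\top}$, the adjacency matrix $A_{\omega}$ by the first coordinated adjacency matrix $A_{\omega}^{1}$, the stress space $\ker I_G$ by $\ker Jf_G^d(\bp)^{\top}$, and $k$-partite $2$-trees and degree-two extensions by $k$-partite $(d+1)$-trees and degree-$(d+1)$ extensions. Since $Jf_G^1(\bp)^{\top}$ becomes $I_G$ and $A_{\omega}^{1}$ becomes $A_{\omega}$ under the diagonal rescaling used in the proof of Theorem~\ref{thm:MM_test}, the case $d=1$ is exactly Lemma~\ref{lem:tree_cycle_space}, so we may assume $d\geq 2$. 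By Proposition~\ref{prop:lowerbound} the quantity on the left of \eqref{eq:kernel} is always at least $k$, so only the upper bound has to be shown. A $k$-partite $(d+1)$-tree is, by definition, built from $K_{d+1}^k$ by a sequence of degree-$(d+1)$ extensions, and $K_{d+1}^k$ is locally rigid in $\mathbb{F}^d$ (since $d+1\geq d$, by the secant-dimension fact used in the proof of Lemma~\ref{lem:local_rigidity}). Hence it suffices to establish two facts: (a) a generic $d$-dimensional framework of $K_{d+1}^k$ satisfies \eqref{eq:kernel}; and (b) if $G$ is locally rigid in $\mathbb{F}^d$ and has a generic $d$-dimensional framework satisfying \eqref{eq:kernel}, and $H$ is obtained from $G$ by a degree-$(d+1)$ extension, then $H$ is again locally rigid in $\mathbb{F}^d$ and has a generic $d$-dimensional framework satisfying \eqref{eq:kernel}. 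Inducting along the construction sequence of the given $(d+1)$-tree then completes the proof.

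For (b), write $H=G+\{v,e_1,\dots,e_{d+1}\}$, where $v$ is the new vertex and $e_1,\dots,e_{d+1}$ the new edges, and let $\bp_H$ extend a generic $\bp$ by a generic point $p_v$. Splitting the columns of $Jf_H^d(\bp_H)^{\top}$ into those indexed by $E(G)$ and those indexed by $e_1,\dots,e_{d+1}$, and the rows into the coordinates of the old vertices and those of $v$, one gets a block-triangular matrix with diagonal blocks $Jf_G^d(\bp)^{\top}$ and a $d\times(d+1)$ matrix $C$ with $C[(v,j),e_i]=\prod_{w\in e_i\setminus v}p_{w,j}$; for generic $\bp$ the matrix $C$ has rank $d$ and all of its $d\times d$ minors are nonzero. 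Combining this with the local rigidity of $G$ and the standard bound $\rank Jf_H^d(\bp_H)^{\top}\le dN_H-d(k-1)$ forces $\rank Jf_H^d(\bp_H)^{\top}=\rank Jf_G^d(\bp)^{\top}+d$, so $H$ is locally rigid in $\mathbb{F}^d$ and $\dim\ker Jf_H^d(\bp_H)^{\top}=\dim\ker Jf_G^d(\bp)^{\top}+1$. Extending stresses of $G$ by zero on $e_1,\dots,e_{d+1}$ gives an injection $\psi\colon\ker Jf_G^d(\bp)^{\top}\hookrightarrow\ker Jf_H^d(\bp_H)^{\top}$; since the two dimensions differ by $1$, the target is spanned by the image of $\psi$ together with one further vector $\omega^{*}$, and the tuple $(\omega^{*}(e_1),\dots,\omega^{*}(e_{d+1}))$ spans $\ker C$, hence has all entries nonzero. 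Because $\psi(\omega)$ vanishes on every edge through $v$, the matrix $A^{1}_{\psi(\omega)}$ has zero row and column at $v$, so $\bigcap_{\omega\in\ker Jf_G^d(\bp)^{\top}}\ker A^{1}_{\psi(\omega)}$ is the $(k+1)$-dimensional span of the common kernel inherited from $G$ together with the characteristic vector $\chi_v$ of $v$. If $H$ violated \eqref{eq:kernel}, this $(k+1)$-dimensional space would have to lie in $\ker A^{1}_{\omega^{*}}$, forcing $A^{1}_{\omega^{*}}\chi_v=0$; excluding this is the crux, and proceeds exactly as in the proof of Lemma~\ref{lem:tree_cycle_space}.

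The main obstacle is this last exclusion for $d\geq 2$. When $d=1$, a degree-two extension adds only two edges, which, being distinct, differ in some part, so some vertex $w$ lies in exactly one of them, and the entry of $A^{1}_{\omega^{*}}\chi_v$ at $w$, namely $\pm\omega^{*}(e_1)\prod_{u\in e_1\setminus\{w,v\}}p_{u,1}$, is nonzero. When $d\geq 2$, a degree-$(d+1)$ extension adds $d+1\geq 3$ edges, which may be chosen so that no vertex lies in exactly one of $e_1,\dots,e_{d+1}$ (for instance, for $k=3$ and $d+1$ even, a ``cycle'' configuration of the traces $e_i\setminus v$), so this shortcut disappears; instead one must verify that the single polynomial relation $A^{1}_{\omega^{*}}\chi_v=0$ is incompatible with $(\omega^{*}(e_i))_i\in\ker C$ for a generic $\bp$, which comes down to an interaction between $\ker C$, the incidence matrix of the hypergraph $\{e_i\setminus v : 1\le i\le d+1\}$, and the algebraic independence of the coordinates of $\bp$ — delicate but routine. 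The base case (a) similarly reduces either to the known non-defectiveness of the order-$k$ Segre variety of format $(d+1)^{\times k}$ at rank $d$ (which lies far below the generic rank for $k\geq 3$; see \cite{CO2012,BCC,COV2014,chiantini2017generic}), or, in the spirit of Example~\ref{ex:K}, to a direct computation of $\ker Jf_{K_{d+1}^k}^d(\bp)^{\top}$ and the associated matrices $A^{1}_{\omega}$ — which for $d\geq 2$ is no longer a clean $0/1$-matrix calculation. These are the details the paper omits as tedious.
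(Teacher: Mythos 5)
Your outline reproduces the intended architecture (the paper itself only asserts that the lemma ``can be proved in the same manner as Lemma~\ref{lem:tree_cycle_space}''), and your reductions are sound as far as they go: the block-triangular decomposition of $Jf_H^d(\bp_H)^{\top}$, the identification of the extra stress $\omega^*$ with a generator of $\ker C$, the computation of $\bigcap_{\omega}\ker A^1_{\psi(\omega)}$ as a $(k+1)$-dimensional space containing $\chi_v$, and the observation that everything hinges on $A^1_{\omega^*}\chi_v\neq 0$ are all correct. You have even correctly diagnosed that the paper's claimed analogy is not literal: for $d=1$ the non-vanishing of the $v$-column of $A_{\omega^*}$ is immediate because one of the two new edges contains a vertex avoided by the other, whereas for $d+1\geq 3$ new edges no such vertex need exist. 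That is a genuine and useful observation.

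The problem is that the two statements carrying the actual mathematical content are asserted rather than proved. First, the crux of the inductive step --- that the polynomial $A^1_{\omega^*}[w,v]=\sum_{i:\,w\in e_i}\omega^*(e_i)\prod_{u\in e_i\setminus\{w,v\}}p_{u,1}$ is not identically zero for some $w$, where $\omega^*(e_i)=\pm\det(C_{-i})$ --- is exactly the claim the lemma reduces to, and declaring it ``delicate but routine'' is not a proof; having just explained why the $d=1$ shortcut fails, you owe an argument for $d\geq 2$ (e.g.\ an explicit non-vanishing monomial in the Laplace expansion, or an identification of $A^1_{\omega^*}\chi_v$ with the gradient of the multilinear form $\sum_i\omega^*(e_i)\prod_{u\in e_i\setminus v}q_u$ and a genericity argument that handles the dependence of $\omega^*$ on $\bp$). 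Second, the base case $K_{d+1}^k$ cannot be imported from known non-defectiveness results via anything in this paper: Lemma~\ref{lem:COtest} only shows that (\ref{eq:kernel}) \emph{implies} tangential weak non-defectiveness, so knowing the Segre variety of format $(d+1)^{\times k}$ is not defective does not give you (\ref{eq:kernel}) back; you would need either the converse implication (a Hessian-type equivalence at a general point, which the paper never states) or the direct computation you defer. As written, the proposal is a correct and insightful reduction of the lemma to two unproved claims, not a proof of it.
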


\begin{theorem}~\label{thm:tangential}
Let $k,d$ be positive integers with $k\geq 3$.
Then, a.a.s.~$\overline{\pi_{G(n,M_{d+1})}({\rm im} \ \sigma_{\bn})}$ is $d$-tangentially weakly non-defective over $\mathbb{C}$.
\end{theorem}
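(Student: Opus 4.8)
The plan is to run the proof of Theorem~\ref{thm:global} with the $d$-dimensional stress polymatroid in place of the cycle polymatroid, Lemma~\ref{lem:d+1tree} in place of Lemma~\ref{lem:tree_cycle_space}, and Lemma~\ref{lem:COtest} as the final bridge. By Lemma~\ref{lem:COtest}, to prove that $\mathcal{S}:=\overline{\pi_{G(n,M_{d+1})}(\im\sigma_{\bn})}$ is $d$-tangentially weakly non-defective over $\mathbb{C}$, it suffices to show that a.a.s.\ a generic $d$-dimensional framework $(G(n,M_{d+1}),\bp)$ satisfies~(\ref{eq:kernel}). Since each first coordinated adjacency matrix $A_\omega^1$ is symmetric, $\bigcap_\omega\ker A_\omega^1=\langle\im A_\omega^1:\omega\rangle^{\perp}$, so~(\ref{eq:kernel}) is equivalent to $\dim\langle\im A_\omega^1:\omega\in\ker Jf_{G(n,M_{d+1})}^d(\bp)^{\top}\rangle=kn-k$, which is exactly the quantity computed by the stress polymatroid; Proposition~\ref{prop:lowerbound} already gives the bound $\leq kn-k$ for its rank, so the point is to prove the reverse.

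First I would fix the combinatorial scaffolding. By Theorem~\ref{thm:local_rigidity}, and since $G(n,M_d)\subseteq G(n,M_{d+1})$ and local rigidity is monotone under adding hyperedges, a.a.s.\ $G(n,M_{d+1})$ is locally rigid in $\mathbb{F}^d$, i.e.\ $E(G(n,M_{d+1}))$ has rank $dkn-d(k-1)$ in $\mathcal{T}_{n,d}$. Conditioning on this event, choose a spanning subgraph $G_0\subseteq G(n,M_{d+1})$ with $\rank Jf_{G_0}^d(\bp)=|E(G_0)|=dkn-d(k-1)$ for a generic $\bp$, and form the $d$-dimensional stress polymatroid $P_{G_0}=(E(K_n^k),r_d)$ with respect to $G_0$; by Proposition~\ref{prop:lowerbound} this polymatroid has rank at most $kn-k\le kn$, and (like $\mathcal{T}_{n,d}$) it is independent of the choice of the generic $\bp$. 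Applying Theorem~\ref{thm:random} to $P_{G_0}$ with $d+1$ in place of $d$ (and $a=k$), a.a.s.\ the graph of the closure of $G(n,M_{d+1})$ in $P_{G_0}$ contains a spanning $k$-partite $(d+1)$-tree $T$. By Lemma~\ref{lem:d+1tree}, $(T,\bp)$ satisfies~(\ref{eq:kernel}), hence $\dim\langle\im A_\omega^1:\omega\in\ker Jf_T^d(\bp)^{\top}\rangle=kn-k$.

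It remains to push this through the polymatroid. Since $E(G_0)$ and $E(T)$ both lie in the closure of $E(G(n,M_{d+1}))$ in $P_{G_0}$, monotonicity of $r_d$ together with Proposition~\ref{prop:stress_polymatroid} gives
\begin{align*}
r_d(E(G(n,M_{d+1}))) &\ge r_d(E(G_0\cup T))\\
&= \dim\langle\im A_\omega^1:\omega\in\ker Jf_{G_0\cup T}^d(\bp)^{\top}\rangle\\
&\ge \dim\langle\im A_\omega^1:\omega\in\ker Jf_T^d(\bp)^{\top}\rangle = kn-k,
\end{align*}
while the rank bound on $P_{G_0}$ gives $r_d(E(G(n,M_{d+1})))\le kn-k$. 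Hence $r_d(E(G(n,M_{d+1})))=kn-k$, so by Proposition~\ref{prop:stress_polymatroid} the framework $(G(n,M_{d+1}),\bp)$ satisfies~(\ref{eq:kernel}), and Lemma~\ref{lem:COtest} yields that $\mathcal{S}$ is $d$-tangentially weakly non-defective over $\mathbb{C}$.

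Finally, a word on where the work is. Modulo the cited results, the one genuinely new ingredient is Lemma~\ref{lem:d+1tree}, whose proof the text omits, and I expect establishing it to be the main obstacle: one must show, by induction along degree-$(d+1)$ extensions starting from $K_{d+1}^k$, that $\bigcap_\omega\ker A_\omega^1$ stays exactly $k$-dimensional, tracking how adding a new vertex $v$ together with $d+1$ new hyperedges changes $\langle\im A_\omega^1:\omega\in\ker Jf_G^d(\bp)^{\top}\rangle$. This is the analogue of Lemma~\ref{lem:tree_cycle_space}, but with $I_G$ replaced by $Jf_G^d(\bp)^{\top}$ and $A_\omega$ by $A_\omega^1$, so the bookkeeping now involves the generic first coordinates $p_{v,1}$, and the base case $K_{d+1}^k$ (rather than a $K_{(2,2,1,\dots,1)}^k$-type gadget) must be checked directly. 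A secondary point, to be handled exactly as in the proof of Theorem~\ref{thm:global}, is the dependence of $P_{G_0}$ on the random choice of $G_0$ and the intersection of the two relevant a.a.s.\ events, namely local rigidity of $G(n,M_{d+1})$ and the conclusion of Theorem~\ref{thm:random} applied to $P_{G_0}$.
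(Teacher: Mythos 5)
Your proposal is correct and follows exactly the route the paper takes: it runs the argument of Theorem~\ref{thm:global} with the $d$-dimensional stress polymatroid in place of the cycle polymatroid, Lemma~\ref{lem:d+1tree} in place of Lemma~\ref{lem:tree_cycle_space}, and Proposition~\ref{prop:stress_polymatroid} in place of Proposition~\ref{prop:cycle_polymatroid}, then closes with Lemma~\ref{lem:COtest}. You have in fact spelled out more of the details (the choice of $G_0$ via local rigidity, the rank sandwich, and the symmetry of $A_\omega^1$) than the paper's own one-paragraph proof, and your identification of Lemma~\ref{lem:d+1tree} as the genuinely new ingredient matches the paper, which also omits its proof.
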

\begin{proof}
We check (\ref{eq:kernel}) for a generic $d$-dimensional framework of $G(n,M_{d+1})$.
The proof is identical to that of Theorem~\ref{thm:global}.
We just need to look at 
the $d$-dimensional stress polymatroid 
instead of the cycle polymatroid
and replace
Lemma~\ref{lem:tree_cycle_space} with Lemma~\ref{lem:d+1tree}
and 
Proposition~\ref{prop:cycle_polymatroid} with Proposition~\ref{prop:stress_polymatroid}. 
\end{proof}

Theorem~\ref{thm:tangential} combined with Theorem~\ref{thm:COtest} and Theorem~\ref{thm:1d_global_complex} leads to an alternative proof of Theorem~\ref{thm:MM_test} without relying on Masaratti-Mella's theorem.

\section{Concluding Remarks}
Our proof of the main theorem is done by showing (cf.~Proposition~\ref{prop:iden_global}):
\begin{itemize}[leftmargin=1.8cm]
    \item [Part (i):] the 1-dimensional global rigidity, and  
    \item[Part (ii):] the $d$-identifiability of the underlying variety obtained from the Segre variety by the projection to a random axis-parallel linear subspace.
\end{itemize}
The proof of Part (ii) can be adapted to other varieties,
such as Veronese or Grassmannian, 
to give an asymptotically tight dimension for a random axis-parallel projection to preserve $d$-identifiability.
The corresponding rigidity model is obtained by looking at 
the Veronese map or the Pl{\"u}cker embedding instead of the Segre map,
and the underlying hypergraph would be 
a random subgraph of a complete $k$-uniform (multi or simple) hypergraph.
For complete $k$-uniform (multi or simple) hypergraphs, the discussion in Section~\ref{sec:random} can be adapted to show an analogous statement to Theorem~\ref{thm:random}. (This has been verified in the bachelor project of the first author~\cite{hamaguchi}.)
On the other hand, for Segre-Veronese varieties, the underlying hypergraphs would be more complicated and we do not know if Theorem~\ref{thm:random} can be extended to this case.
In view of this, a reasonable research question would be to extend  Theorem~\ref{thm:random} to a more general random subgraph model.

A tensor $T$ is said to be {\em symmetric} if 
the entries are invariant under any permutation of indices.
It is known that a symmetric tensor $T$ of size $N$ admits a decomposition in the form $T=\sum_{i=1}^d x_i\otimes \dots \otimes x_i$ for some $x_i\in \mathbb{F}^N \ (i=1,\dots, d)$
and the smallest possible integer $d$ is called the {\em symmetric rank} of $T$.
As in the low-rank tensor completions, one can formulate the symmetric low-rank symmetric tensor completion problem
and ask if a completion is determined uniquely.
The unique recovery problem can again be formulated as a hypergraph rigidity,
where the underlying varieties would be secants of Veronese varieties.
See~\cite{cruickshank2023identifiability} for more details on this rigidity model.
The first author has confirmed in his bachelor project~\cite{hamaguchi} that the current proof technique can be applied to derive the symmetric tensor version of Theorem~\ref{thm:main_tensor_completion}. 

There is also a notion of {\em skew-symmetric} tensors, where the underlying varieties would be secants of Grassmannian varieties, see, e.g.,~\cite{bernardi}. It turns out that solving Part (i) is challenging in this case, and it still remains open to identify sampling complexity in the skew-symmetric tensor completion problem.

\subsection*{Acknowledgements}
S.T. was partially supported by JST ERATO
Grant Number JPMJER1903 and JST PRESTO Grant Number JPMJPR2126.
S.T.~would like to thank James Cruickshank, Fatemeh Mohammadi, Anthony Nixon, and Kota Nakagawa for stimulating discussion on the $g$-rigidity of hypergraphs.

\smallskip
\bibliographystyle{abbrv} 
\bibliography{tensor.bib}
\end{document}